\DeclareMathOperator*{\argminA}{arg\,min}
\definecolor{mydarkblue}{rgb}{0,0.08,0.45}
\definecolor{mydarkblue}{rgb}{0,0.08,0.45}
\definecolor{myteal}{RGB}{27,158,119}
\definecolor{myorange}{RGB}{217,95,2}
\definecolor{myred}{RGB}{231,41,138}
\definecolor{mypurple}{RGB}{152,78,163}
\definecolor{myblue}{RGB}{55,126,184}
\definecolor{mygreen}{RGB}{0,100,0}
\newtheorem{lemma}{Lemma}
\newtheorem{theorem}{Theorem}
\newtheorem{proposition}{Proposition}
\newtheorem{assumption}{Assumption}
\newtheorem{remark}{Remark}
\newtheorem{corollary}{Corollary}
\numberwithin{equation}{section}
\numberwithin{lemma}{section}
\numberwithin{proposition}{section}
\numberwithin{definition}{section}
\numberwithin{corollary}{section}
\DeclareMathOperator{\E}{\mathbb{E}}
\DeclareMathOperator{\Exp}{\mathbb{E}}
\def\N{\mathbb{N}}
\def\aa{{\boldsymbol a}}
\def\bb{{\boldsymbol b}}
\def\xx{{\boldsymbol x}}
\def\XX{{\boldsymbol X}}
\def\ZZ{{\boldsymbol Z}}
\def\aa{{\boldsymbol a}}
\def\bb{{\boldsymbol b}}
\def\WW{{\boldsymbol W}}
\def\II{{\boldsymbol I}}
\def\yy{{\boldsymbol y}}
\def\ww{{\boldsymbol w}}
\def\BB{{\boldsymbol B}}
\def\AA{{\boldsymbol A}}
\def\OO{{\boldsymbol O}}
\def\PP{{\boldsymbol P}}
\def\UU{{\boldsymbol U}}
\def\VV{{\boldsymbol V}}
\def\HH{{\boldsymbol H}}
\def\SSigma{{\boldsymbol \Sigma}}
\def\nnu{{\boldsymbol \nu}}
\def\eeta{{\boldsymbol \eta}}
\def\dif{\mathop{}\!\mathrm{d}}
\def\MP{\mu_{\mathrm{MP}}}
\def\RR{{\mathbb R}}
\def\EE{{\mathbb E}\,}
\DeclareMathOperator{\tr}{tr}
\def\defas{\stackrel{\text{def}}{=}}
\DeclareDocumentCommand{\Prto} {o} {
  \IfNoValueTF {#1}
  {\overset{\Pr}{\longrightarrow}}
  { \xrightarrow[ #1 \to \infty]{\Pr }}
}
\DeclareDocumentCommand{\Asto} {o} {
  \IfNoValueTF {#1}
  {\overset{\text{\rm a.s.}}{\longrightarrow}}
  { \xrightarrow[ #1 \to \infty]{\text{\rm a.s.} }}
}
\DeclareDocumentCommand{\law} {o} {
  \IfNoValueTF {#1}
  {\overset{\text{law}}{=}}
  { \xrightarrow[ #1 \to \infty]{\Pr }}
}
\title{Dynamics of Stochastic Momentum Methods on Large-scale, Quadratic Models}
\author{%
  Courtney Paquette \thanks{Website
  \href{courtneypaquette.github.io}{courtneypaquette.github.io}
  .} \\
  Department of Mathematics and Statistics\\
  McGill University\\
  Montreal, Quebec H2Y 2M5 \\
  \texttt{courtney.paquette@mcgill.ca} \\
  \And
  Elliot Paquette \thanks{Website 
  \href{elliotpaquette.github.io}{elliotpaquette.github.io}
  .} \\
  Department of Mathematics and Statistics\\
  McGill University\\
  Montreal, Quebec H2Y 2M5 \\
  \texttt{elliot.paquette@mcgill.ca} \\
}
\begin{document}
\maketitle
\begin{abstract}
    We analyze a class of stochastic gradient algorithms with momentum on a high-dimensional random least squares problem. Our framework, inspired by random matrix theory, provides an exact (deterministic) characterization for the sequence of loss values produced by these algorithms which is expressed only in terms of the eigenvalues of the Hessian. This leads to simple expressions for nearly-optimal hyperparameters, a description of the limiting neighborhood, and average-case complexity. 
    
    As a consequence,
    we show that (small-batch) stochastic heavy-ball momentum with a fixed momentum parameter provides no actual performance improvement over SGD when step sizes are adjusted correctly. For contrast, in the non-strongly convex setting, it is possible to get a large improvement over SGD using momentum. By introducing hyperparameters that depend on the number of samples, we propose a new algorithm SDANA (stochastic dimension adjusted Nesterov acceleration) which obtains an asymptotically optimal average-case complexity while remaining linearly convergent in the strongly convex setting without adjusting parameters.
    
\end{abstract}

Methods that incorporate momentum and acceleration play an integral role in machine learning where they are often combined with stochastic gradients. Two of the most popular methods in this category are the heavy-ball method (HB) \citep{Polyak1962Some} and Nesterov's accelerated method (NAG) \citep{nesterov2004introductory}. These methods are known to achieve optimal convergence guarantees when employed with \textit{exact gradients} (computed on the full training data set), but in practice, these momentum methods are typically implemented with \textit{stochastic} gradients. In the influential work \citet{sutskever2013on}, the authors demonstrated empirical advantages of augmenting stochastic gradient descent (SGD) with the momentum machinery and, as a result, momentum methods are widely used for training deep neural networks.  Yet despite the popularity of these stochastic momentum methods, the theoretical understanding of these algorithms remains rather limited. 

In this paper, we study the dynamics of stochastic momentum methods (with batch size $1$ and constant step size) rooted in heavy-ball momentum and Nesterov's accelerated gradient algorithms on a least squares problem. Our approach uses a framework inspired by the phenomenology of random matrix theory (see \cite{paquetteSGD2021}), which gains explanatory power when the number of samples $(n)$ and features $(d)$ are large. A key contribution of this work is a simple description of the exact dynamics for a class of stochastic momentum methods in the \textit{high-dimensional limit}; we construct a smooth, deterministic function $\psi(t)$ such that $f(\xx_k) \to \psi(k/n)$ as $n \to \infty$. This function $\psi$ solves a Volterra integral equation:
\begin{equation} \label{eq:volterra_intro}
\psi(t) = F(t) + \int_0^t \mathcal{K}_s(t) \psi(s) \dif s.
\end{equation}

\begin{wrapfigure}[23]{r}{0.48\textwidth}
\vspace{-0.35cm}
 \centering
     \includegraphics[width = 1\linewidth]{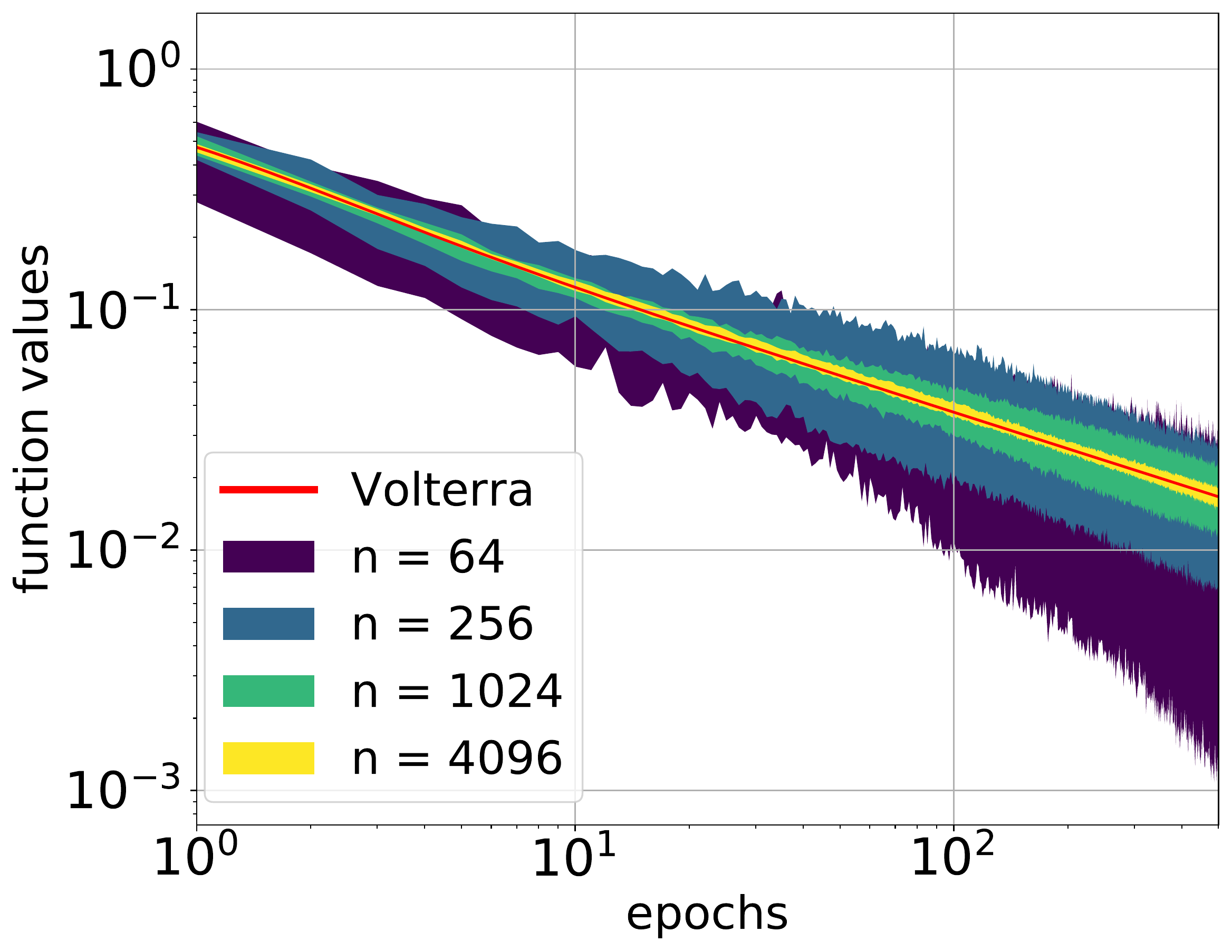}
     \vspace{-0.5cm}
     \caption{\textbf{Concentration of stochastic heavy-ball} (SHB) on a Gaussian random least squares problem (Sec.~\ref{sec:main_problem_setting}), $d = n$, an 80\% confidence interval (shaded region) over 10 runs for each $n$, the parameters for SHB (Table~\ref{table:stochastic_algorithms}) are $(\theta, \gamma) = (0.1,0.08)$. The random least squares problem becomes non-random in the large limit and all runs of SHB converge to a deterministic function $\psi(t)$ (red) given by our Volterra equation \eqref{eq:volterra_intro}.} \label{fig:concentration_SHB}
\end{wrapfigure}

Here $F(t)$ and $\mathcal{K}_s(t)$ are explicit, see Theorem~\ref{thm:hSGD} for a precise statement. This Volterra equation \eqref{eq:volterra_intro} gives an accurate prediction of the behavior of stochastic methods, see Figure \ref{fig:concentration_SHB}.  We then analyze these dynamics providing insight into step size and momentum parameter selections as well as providing both upper and lower average-case complexity (\textit{i.e.}, the complexity of an algorithm averaged over all possible inputs) for the last iterate.

As we show in this work, both theoretically and empirically, (small batch size) SGD with heavy-ball momentum (SHB) for any fixed momentum parameter does \textit{not} provide any acceleration over plain SGD on large-scale least square problems. 
We conclude under an identification of the parameters that $f(\xx^{\text{shb}}_k) = f(\xx^{\text{sgd}}_k)$ for all $k$ up to errors that vanish as $n$ grows large (upper bounds of this nature have been observed before: see \cite{zhang2019which,kidambi2018on,sebbouh2020almost}). Thus while SHB may provide a speed-up over SGD, it is only due to an effective increase in the learning rate, and this speed-up could be matched by appropriately adjusting the learning rate of SGD. 

The root of SHB's failure to provide meaningful acceleration is that a fixed momentum parameter is not aggressive enough when $n$ is large. 
We propose a new algorithm that uses a dimension-based modification of Nesterov (see Alg.~\ref{alg:class} and Table~\ref{table:stochastic_algorithms}). The resulting algorithm, SDANA, matches the average-case complexity of SGD when the least-squares problem is strongly convex and obtains an average-case complexity of $1/k^3$ in the convex setting.    

\begin{algorithm}[t!]
        \caption{Generic stochastic momentum method.} \label{alg:class}
       \begin{algorithmic} \STATE \textbf{Given}: step sizes $\Gamma_1, \Gamma_2 > 0$ and momentum parameter $\Delta(k) > 0$\\
          \STATE \textbf{Initialize}: $\xx_0 \in \mathbb{R}^d$ and $\yy_0 = \bm{0}$\\
          \STATE \textbf{for} $k \ge 1$, Select $i_k \in [n]$ uniformly and update
          \begin{align}
               \yy_k = (1-\Delta(k) ) \yy_{k-1} + \Gamma_1 \nabla f_{i_k}(\xx_k)  \qquad \text{and} \qquad \xx_{k+1} = \xx_k - \Gamma_2 \nabla f_{i_k}(\xx_k) -\yy_k
          \end{align}
        \end{algorithmic}  
\end{algorithm}

\section{Motivation and related work} \label{sec:motivation} We consider the large finite-sum setting
\[ \min_{\xx \in \mathbb{R}^d}~ \bigg \{  f(\xx) =  \frac{1}{n} \sum_{i=1}^n f_i(\xx) = \frac{1}{2} \sum_{i=1}^n (\aa_i \xx-\bb_i)^2 = \frac{1}{2} \|\AA \xx - \bb\|^2 \bigg \}, \]
for data matrix $\AA \in \mathbb{R}^{n \times d}$ whose $i$-th row is denoted by $\aa_i \in \mathbb{R}^{d \times 1}$ and target vector $\bb \in \mathbb{R}^n$ (detailed in Section~\ref{sec:main_problem_setting}). We make the convention that the matrix $\AA$ has max row norm equal to $1$.
Note we absorb some $n$--dependence into $\AA$ and $\bb$ by setting $\tfrac{1}{n}f_i(\xx) = \tfrac{1}{2}( \aa_i \xx - b_i)^2$. 
We investigate a generic class of stochastic momentum algorithms (see Alg.~\ref{alg:class} and Table~\ref{table:stochastic_algorithms}). Particularly, we introduce a sub-class, denoted by SDA$(\gamma_1, \gamma_2, \Delta)$, of Alg.~\ref{alg:class} which has parameters that are appropriately adjusted for large problems (large number of samples $n$ and large model size $d$); we refer to the \textit{dimension} of the problem as $n$. The class SDA$(\gamma_1, \gamma_2, \Delta)$ is defined by setting in Alg.~\ref{alg:class}
\begin{equation}\label{eq:SDA}
    \Gamma_1 = \frac{\gamma_1}{n}, \quad \Gamma_2 = \gamma_2, \quad \text{and} \quad \Delta(k) \defas \tfrac{1}{n}(\log \varphi)'(\tfrac{k}{n}),
\end{equation}
where $\gamma_1, \gamma_2 > 0$ are step sizes and $\varphi$ is a smooth function that represents a momentum schedule. 
Although we develop some theory for general $\varphi$, we are principally interested in the two cases:
\begin{equation} \begin{gathered}
(\text{SDANA})
\, \, 
\Delta(k) = \tfrac{\theta}{k+n}
\leftrightarrow
\varphi(t) = (1+t)^{\theta}
 \quad\text{and}\quad (\text{SDAHB}) \, \, 
 \Delta(k) = \tfrac{\theta}{n}
 \leftrightarrow
 \varphi(t)=e^{\theta t}.
\end{gathered} \end{equation}



\begin{wrapfigure}[33]{r}{0.55\textwidth}
 \centering
     \includegraphics[scale = 0.28]{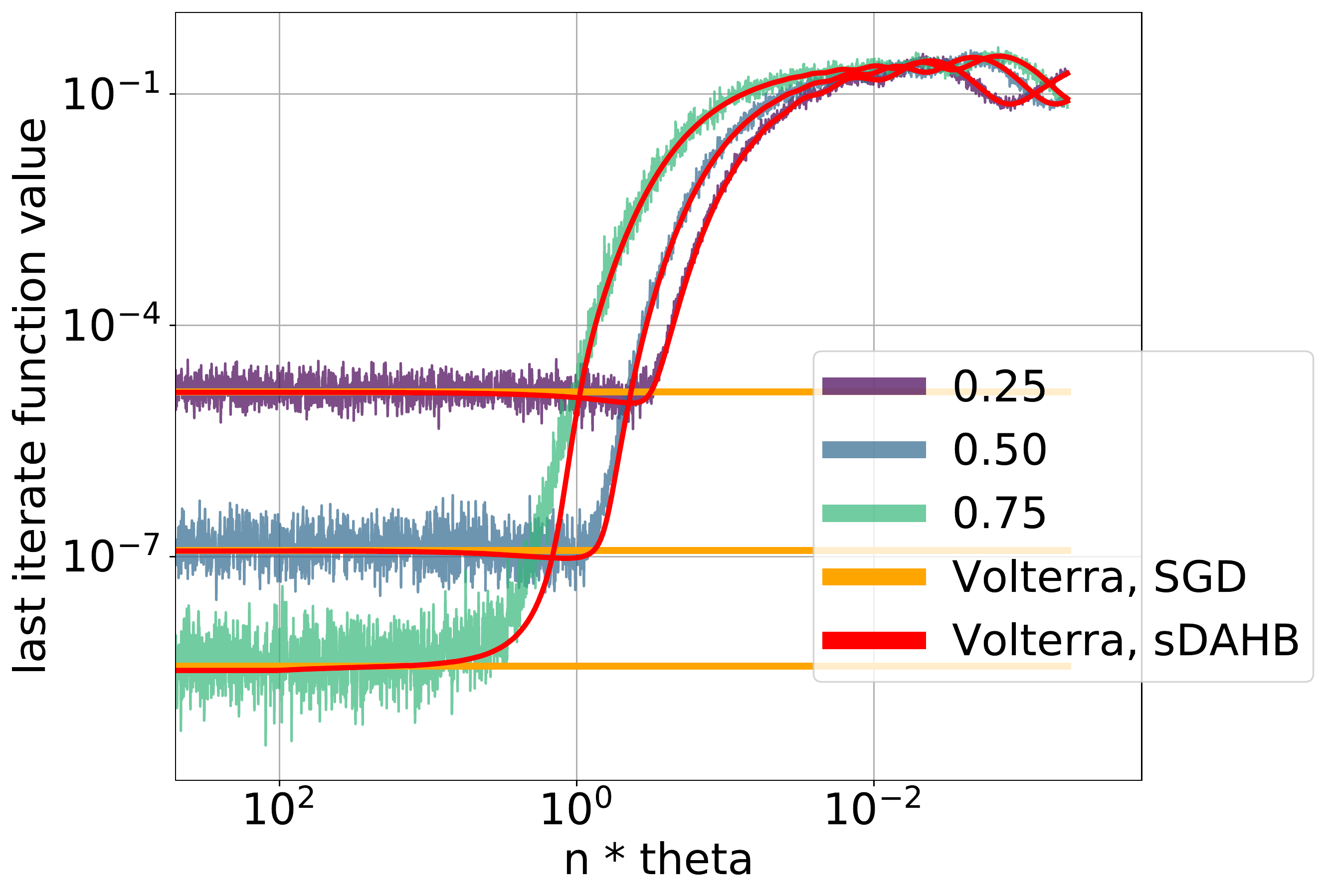}
     \vspace{-0.15cm}
     \caption{\textbf{Equivalence of SGD and stochastic Heavy-Ball.},
     For every $\gamma^{\text{sgd}} \in \{0.25,0.50,0.75\}$, we select a pair of parameters $(\gamma^{\text{shb}}, \theta^{\text{shb}})$ so that $\gamma^{\text{sgd}}=\tfrac{\gamma^{\text{shb}}}{\theta^{\text{shb}}}$. We run SHB 3000 times with varying $\theta^{\text{shb}}$ on $\eqref{eq:lsq}$ with $d = 500, n = 1000$, and plot the value of the last iterate after 50 epochs. Small $\theta^{\text{shb}}$ matches SGD (orange, theory), illustrating their equivalence. With $n \cdot \theta^{\text{shb}} \approx 1,$ a change is observed, giving a small improvement for some values of $\gamma^{\text{sgd}}$.  Plotted against theory for SDAHB (red Volterra, see Thm.~\ref{thm:hSGD} and App~\ref{sec:dahb}), which is the same algorithm as SHB after a change of parameters.}
     \label{fig:SHB_equals_SGD}
\end{wrapfigure}

To avoid confusion between different algorithms, we add superscripts indicating the algorithm (\textit{e.g.}, we denote $\Gamma_2 = \gamma^{\text{sgd}}$, the step size parameter for SGD). For all these algorithms, we are interested in:
\begin{enumerate}[leftmargin=*]
    \item An expression for the (deterministic) dynamics of these algorithms when \textit{multiple passes} on the data set are allowed. This contrasts with the "streaming" or "online" setting where at each iteration one generates an independent never-before-used data point.
    \item A formula for choosing the hyperparameters and a discussion of the dependence of these hyperparameters on number of features and samples.  
    \item Upper and lower bounds on the average-case complexity of the \textit{last iterate} to a neighborhood; this neighborhood disappears entirely in the overparameterized regime, while in the underparameterized regime the limiting distance to optimality concentrates in the high-dimensional limit.
\end{enumerate}

\renewcommand{\arraystretch}{1.1}
\ctable[notespar,
caption = {{\bfseries Summary of the parameters for a variety of stochastic momentum algorithms} that fit within the framework of Alg.~\ref{alg:class}, denote the normalized trace by $m \defas n^{-1}\sum_{i=1}^n \|\aa_i\|^2$. The default parameters are chosen so that its linear rate is no slower, by a factor of $4$ than the fastest possible rate for an algorithm having optimized over all step size choices. \vspace{-0.5em}
} ,label = {table:stochastic_algorithms},
captionskip=2ex,
pos =!t
]{c c c c c}{}{
\toprule
\multirow{2}{5em}{\textbf{Methods}} & \multicolumn{3}{c}{  \textbf{Alg.~\ref{alg:class} Parameters}}  & \multirow{2}{10em}{ \centering \textbf{Default Parameters}} \\
& $\Gamma_1$ & $\Gamma_2$ & $\Delta(k)$ &\\
\midrule
\begin{minipage}{0.38\textwidth} \begin{center} Stochastic gradient descent: \textbf{SGD$(\gamma)$}
\end{center}
\end{minipage} &   $0$& $ \displaystyle \gamma$ & $1$ & \begin{minipage}{0.2\textwidth} \begin{center} $\gamma = \frac{1}{m}$, \, 
{\footnotesize (Prop.~\ref{propG:sGDdefault})}
\end{center} \end{minipage}\\
\midrule
\begin{minipage}{0.38\textwidth} \begin{center} Stoch. gradient descent w/ momentum: \textbf{SHB$(\gamma, \theta)$}  \end{center}
\end{minipage} & 
$\displaystyle \gamma$ & $0$ & $\theta$ & (see Fig.~\ref{fig:SHB_equals_SGD})\\
\midrule
\begin{minipage}{0.38\textwidth} \begin{center}
Stoch. dimension-adjusted heavy-ball: \textbf{SDAHB$(\gamma, \theta)$} \\ \textcolor{purple}
{(This paper)}\\
 \end{center} \end{minipage}& $\displaystyle \frac{\gamma}{n}$ & $0$ & $\displaystyle \frac{\theta}{n}$ &\begin{minipage}{0.2\textwidth} \begin{center} $\gamma = \frac{\theta}{m}$, \, \, $\theta = 2$ {\footnotesize (Prop.~\ref{propG:default})} \end{center} \end{minipage}\\
\midrule
\begin{minipage}{0.38\textwidth} \begin{center}
Stoch. dimension-adjusted Nesterov's accel. method: \textbf{SDANA$(\gamma_1, \gamma_2, \theta)$}\\
\textcolor{purple}{(This paper)}\\
 \end{center} \end{minipage} & $\displaystyle \frac{\gamma_1}{n}$ & $\displaystyle \gamma_2$ & $\displaystyle \frac{\theta}{k+n}$ & \begin{minipage}{0.3\textwidth} \begin{center} $\gamma_1 = \tfrac{1}{4 m}$, \, \, $\gamma_2 = \frac{1}{m} $, \\ $\theta=4$ {\footnotesize (Cor.~\ref{corE:stepsize})} \end{center} \end{minipage}\\
 \bottomrule
}

\paragraph{Why divide by n? A negative result.} Throughout the literature, there are examples for which (small batch size) stochastic momentum methods such as SHB and stochastic Nesterov's accelerated method (SNAG) achieve performances equal to (or even worse) than small batch size SGD (see \textit{e.g.}, \cite{sebbouh2020almost,kidambi2018on,zhang2019which} for heavy-ball and \cite{Liu2020accelerating,assran2020on,zhang2019which} for Nesterov). We also observe this phenomenon (see \eqref{eq:diffusion_volterra} and App.~\ref{sec:SDAHBsGD}, Thm~\ref{thmG:degeneration}), and we illustrate this in Fig.~\ref{fig:SHB_equals_SGD}. The stochastic heavy-ball method for \textit{any} fixed step size and momentum parameters (Table~\ref{table:stochastic_algorithms}) has the \textit{exact} same dynamics as vanilla SGD, which is to say, by setting the step size parameter in SGD to be $\gamma^{\text{sgd}} = \frac{\gamma^{\text{shb}}}{\theta^{\text{shb}}}$, the two algorithms have the same loss values provided the number of samples is sufficiently large, \textit{i.e.}, $f(\xx_k^{\text{sgd}}) = f(\xx_k^{\text{shb}})$.  

As a consequence of this, the average-case complexity of SHB equals the last iterate complexity of SGD (This was observed in \citep{sebbouh2020almost} with an upper bound, but our result shows an exact equivalence between last iterate SGD and SHB). Although App.~\ref{sec:SDAHBsGD}, Thm~\ref{thmG:degeneration} (see also \eqref{eq:diffusion_volterra}) gives an unsatisfactory answer to stochastic heavy-ball with fixed $\theta$ and small batch-size, our analysis illuminates a path forward. Particularly, \textit{one must choose \textbf{dimension-dependent} parameters to achieve dynamics which differ from SGD.}

\begin{wrapfigure}[23]{r}{0.5\textwidth}
    \centering
    \vspace{-0.5cm}
        \includegraphics[scale = 0.265]{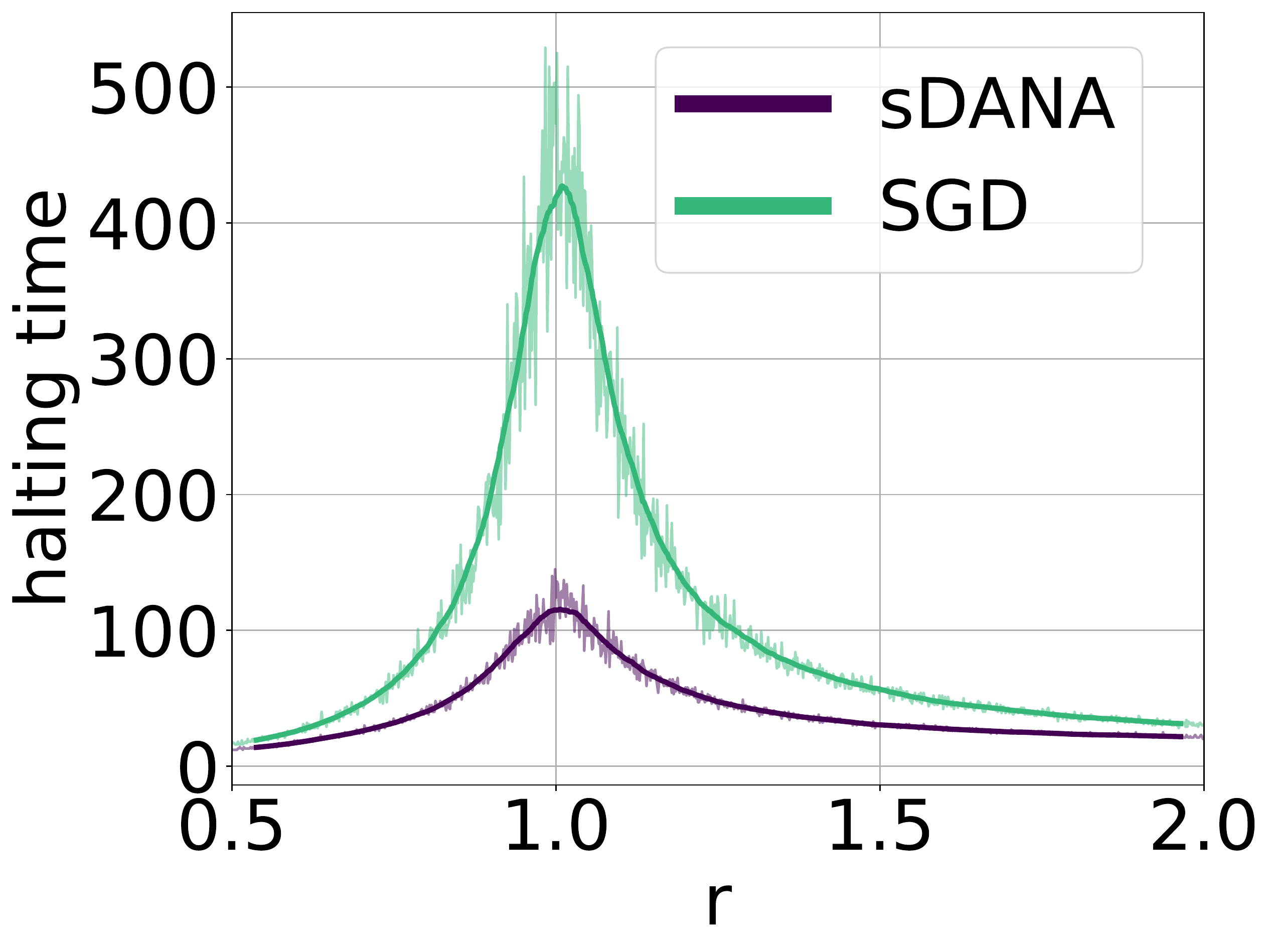} \vspace{-0.2cm}
	\caption{\textbf{Convergence of SDANA.} Halting time of SDANA vs SGD with default parameters on the Gaussian random least squares problem \eqref{eq:lsq} with varying $d$ and $n=1024$. When the ratio $r = d/n \to 1$ (in which case $\max\{\lambda_{\min}(\AA^T\AA),\lambda_{\min}(\AA\AA^T)\} \to 0$, SDANA requires significantly fewer iterations to reach a loss of $10^{-5}$. 
        As the ratio $r$ moves away from $1$, the performance of SDANA matches SGD.
	~
      }    \label{fig:SDANA_faster}
\end{wrapfigure} 

\paragraph{Why divide by n? A positive result.}
Adapting SHB for dimension, we arrive at stochastic dimension adjusted heavy ball (SDAHB).  While formally equivalent to SHB, we include the dimension parameters to emphasize that any improvement in its performance for large $n$ requires it.  Nonetheless, the speed-up for heavy ball is modest (see Fig.~ \ref{fig:SHB_equals_SGD}).

On the other hand, we show that a dimension adapted version of Nesterov acceleration, SDANA, has a large improvement in the non-strongly convex case. Moreover, with a simple parameter choice (see the default parameters in Table \ref{table:stochastic_algorithms}), it will perform linearly in the strongly convex case, and competitively with learning-rate-tuned SGD (or SHB), while performing orders-of-magnitude faster ($k^{-3}$ as compared to SGD $k^{-1}$) for the non-strongly convex setting (see Fig. \ref{fig:SDANA_faster} and Table~\ref{tab:complexity_main}).  We believe this gives SDANA promise as an algorithm outside of the least squares context, in situations in which loss landscapes can range between alternately curved and very flat, frequently observed in neural network settings (see \cite{ghorbani2019investigation,li2018measuring,sagun2016eigenvalues}).

\paragraph{Related work.} Recent works have established convergence guarantees for SHB in both strongly convex and non-strongly convex setting \citep{flammarion2015from,gadat2016stochastic,orvieto2019role,yan2018unified,sebbouh2020almost}; the latter references having established almost sure convergence results. Specializing to the setting of minimizing quadratics, the iterates of SHB converge linearly (but not in $L^2$) under an exactness assumption \citep{loizou2017momentum} while under some additional assumptions on the noise of the stochastic gradients, \citep{kidambi2018on,can2019accelerated} show linear convergence to a neighborhood of the solution. 

Convergence results for stochastic Nesterov's accelerated method (SNAG), under both strongly convex and non-strongly setting, have also been established. The works \citep{kulunchakov2019generic,assran2020on,aybat2018robust,can2019accelerated} showed that SNAG converged at the optimal accelerated rate to a neighborhood of the optimum. Under stronger assumptions, convergence to the optimum at an accelerated rate is guaranteed.   
Examples include the strong growth condition \citep{vaswani2019fast} and additive noise on the stochastic gradients \citep{laborde2019nesterov}. 

The lack of general convergence guarantees showing acceleration for existing momentum schemes, such as heavy-ball and NAG, in the stochastic setting, has led many authors to design alternative acceleration schemes \citep{ghadimi2012optimal,ghadimi2013optimal, allen2017katyusha, kidambi2018on,kulunchakov2019generic,Liu2020accelerating}.

\section{Random least squares problem}
\label{sec:main_problem_setting} 
To formalize the analysis of a high--dimensional, typical least squares problem, we define the \textit{random least squares problem}: 
 \begin{equation}\label{eq:lsq}
    \argminA_{\xx\in\mathbb{R}^d} \Big\{ f(\xx) =  \frac{1}{n} \sum_{i=1}^n f_i(\xx)  \overset{\mathrm{def}}{=} \frac{1}{2}\sum_{i=1}^n (\aa_i \xx - b_i)^2 \Big\} , \quad \text{with $\bb \defas \AA \widetilde{\xx} + \eeta$.}
\end{equation}
The data matrix $\AA$ is random
and we shall introduce assumptions on $\AA$ as they are needed, but we suggest as a central example the \emph{Gaussian random least squares} where each entry of $\AA$ is sampled independently from a standard normal distribution with variance $\tfrac{1}{d}$. We always make the assumption that each row $\aa_i \in \RR^{d\times 1}$ is centered and is normalized so that $\max_i~\{ \EE[\|\aa_i\|^2] \} = 1$.


As for the target $\bb = \AA \widetilde{\xx} + \eeta$, we assume it comes from a generative model corrupted by noise, where $\widetilde{\xx}$ is signal and $\eeta$ is noise. 
\begin{assumption}[Initialization, signal, and noise] \label{assumption: Vector} 
The initial vector $\xx_0 \in \RR^d$ is chosen so that $\xx_0-\widetilde{\xx}$ is independent of the matrix $\AA$.  The noise $\eeta$ is centered and has i.i.d. entries, independent of $\AA$.  The signal and noise are normalized so that
\[
\Exp \|\xx_0 - \widetilde{\xx}\|^2_2 = R\tfrac{d}{n}
\quad\text{and}\quad
\EE[\|\eeta\|_2^2] = \widetilde{R}.
\]
\end{assumption}
\noindent 

Note that deterministic $\xx_0-\widetilde{\xx}$ satisfies this assumption.  The vectors $\xx_0-\widetilde{\xx}$ and $\eeta$ arise as a result of preserving a constant signal-to-noise ratio in the generative model. Such generative models with this scaling have been used in numerous works \citep{mei2019generalization,hastie2019surprises,gerbelot2020asymptotic}. 

 For the data matrix $\AA$ we introduce the Hessian matrix $\widetilde{\HH} = \AA^T \AA$ and its symmetrization ${\HH} = \AA \AA^T$.  Let $\lambda_1 \ge \ldots \ge \lambda_n$  be the eigenvalues of the matrix ${\HH}$.  Up to appending zeros, this is the same ordered sequence of eigenvalues as those of the Hessian.  Define
the \emph{empirical spectral measure} (ESM) of $\HH$, $\mu_{\HH}$ by the formula
\begin{equation}\label{eq:ESM}
\int g(\lambda) \mu_{\HH}(\dif \lambda) \defas \frac{1}{n} \sum_{i=1}^n g( \lambda_i)
\quad\text{for any continuous function } g : \RR \to \RR.
\end{equation}
This gives the interpretation for the empirical spectral measure as the distribution of an eigenvalue of $\HH$ chosen uniformly at random.

\paragraph{Diffusion approximation.}
Our analysis will use a diffusion approximation to analyze the SDA($\gamma_1, \gamma_2, \Delta$) class of stochastic momentum methods (see \eqref{eq:SDA}) on the random least squares setup \eqref{eq:lsq}.  We call the approximation \emph{homogenized SGD}:
\begin{equation}\label{eqF:HSQD}
\dif \XX_t \defas
-\gamma_2 
\dif \ZZ_t
-\frac{\gamma_1}{\varphi(t)}
\int_0^t \varphi(s) \dif \ZZ_t,
\, \,
\text{where}
\, \,
\dif \ZZ_t \defas \nabla f(\XX_t) \dif t
+ \sqrt{\tfrac{2}{n}f(\XX_t)\nabla^2(f)}\dif \BB_t,
\end{equation}
and with initial conditions given by $\XX_0 = \xx_0$.  The process $(\BB_t : t \geq 0)$ is a $d$--dimensional standard Brownian motion.  Here time is scaled in such a way that $t=1$ represents one pass over the dataset or $n$ calls to the stochastic oracle.  Similar SDEs have appeared frequently in the theory around SGD, see \textit{e.g.}, \cite{mandt2016variational,li2017stochastic,li2019stochastic}.

The advantage of the homogenized SGD diffusion is that we are able to give an explicit representation of the expected loss values on a least squares problem, even at finite $n$.
\begin{theorem}[Volterra dynamics at finite $n$]\label{thm:hSGD}
Let $\Exp_{\HH}[ \cdot] $ be the conditional expectation where $\HH$ is held fixed.
    There are non-negative functions 
    $F(t)$ and $\mathcal{K}_s(t)$
    for $s,t \geq 0$ depending on the spectrum of $\HH$ so that
    for all $t \geq 0$
    \begin{equation} \label{eq:diffusion_volterra}
    \Exp_{\HH}[ f(\XX_t)]
    = F(t) + \int_0^t \mathcal{K}_s(t)   \Exp_{\HH}[ f(\XX_s)]\dif s,
    \quad\text{for all}\quad
    t \geq 0.
    \end{equation}
    The forcing function $F$ and kernel $\mathcal{K}$ are given by
    \[
    F(t) = \frac{1}{n}\sum_{i=1}^n (R\lambda_i + \widetilde{R}) G^{(\lambda_i)}(t)
    \quad\text{and}\quad
    \mathcal{K}_s(t) 
    =
    \frac{1}{n}\sum_{i=1}^n K^{(\lambda_i)}_s(t).
    \]
    The functions $G^{(\lambda)}$ and $K^{(\lambda)}$ are solutions of an initial value problem with a $3$-rd order ODE which depend on the hyperparameters $(\gamma_1,\gamma_2,\Delta)$ (Note, there is a 1-to-1 relationship with $\varphi$, see \eqref{eq:SDA}).
\end{theorem}
\noindent We refer to the supplemental materials for the explicit third--order ODE (see Theorem \ref{thm:bighSGD} for full details).  The expression in \eqref{eq:diffusion_volterra} is a Volterra integral equation, which can be analyzed explicitly, and has a relatively simple theory, especially in the case that the kernel is of convolution type (i.e.\ $\mathcal{K}_s(t) = \mathcal{I}(t-s)$ for some function $\mathcal{I}$); see Table~\ref{table:convolution_kernel} for kernels. We also note that in the case of SGD$({\gamma})$ ($\varphi$ is unused), the functions $G$ and $K$ become particularly simple
\[
G^{(\lambda)}(t) = e^{-2\gamma \lambda t}
\quad\text{and}\quad
K^{(\lambda)}_s(t) = \gamma^2 \lambda^2 e^{-2\gamma \lambda (t-s)}.
\]

\paragraph{Comparing homogenized SGD to the SDA class.} When $\AA$ is a random matrix, we can compare the diffusion \eqref{eqF:HSQD} to SDA \eqref{eq:SDA} when $n$ and $d$ are large.  The argument is based on the results of \cite{paquetteSGD2021}, and we do it only in the case of SGD:
\begin{theorem}[Concentration of SGD]\label{thm:SGDthm}
  Suppose that $\AA$ is a \emph{left-orthogonally invariant} random matrix, meaning that for any orthogonal matrix $\OO \in \mathbb{R}^{n \times n},$
    \(\OO \AA \law \AA\). 
Suppose further that the noise vector $\eeta$ is independent of $\AA$ and that it satisfies
\[
\quad \EE[\|\eeta\|_\infty^p] = \mathcal{O}( n^{\epsilon-p/2}) \quad \text{for any } \epsilon, p >0.
\]
Fix $\gamma < 2n (\tr \HH)^{-1}$, the convergence threshold of SGD($\gamma$).
There is an absolute constant $\varepsilon >0$ and a constant $c(T,\lambda_H^+)$ so that with $p =\min\{d,n\},$
\[
\Pr(
\sup_{0 \leq t \leq T} |\Exp_{\HH}[f(\XX_t)] - f(\xx_{[nt]})|
> c(T,\lambda_H^+) p^{-\varepsilon}~\vert~\lambda_H^+) \leq p^{-\varepsilon}.
\]
\end{theorem}

We expect that this theorem can be generalized, to include the entire SDA class.  We also expect that the orthogonal invariance assumption can be relaxed somewhat (for example to include classes of non--Gaussian isotropic features matrices), but not entirely: the left singular vectors need to have some degree of isotropy for the result to hold.  The numerical results show very good general agreement with theory and demonstrate the validity of the approximation: see Figures \ref{fig:concentration_SHB},  \ref{fig:SHB_equals_SGD}, and \ref{fig:concentration} as well as Figure~\ref{fig:MNIST} on real data.  Nonetheless, it is of great theoretical interest to establish the theorem in greater generality. We show a heuristic derivation in App.~\ref{sec:hSGDlsq}.

\renewcommand{\arraystretch}{1.1}
\ctable[notespar,
caption = {{\bfseries Summary of the convolution kernel for the Volterra equations} \eqref{eq:diffusion_volterra} for all algorithms considered. The convolution kernel (below) for SDANA is an approximation to the true kernel. The forcing terms $G^{(\lambda)}(t)$ for SGD and SDAHB are similar to the kernel whereas the forcing term for SDANA is defined only by solving a 3rd-order ODE.} ,label = {table:convolution_kernel},
captionskip=1ex,
pos =!t
]{c c c c}{ }{
\toprule
\textbf{Methods} & \textbf{Kernel}, $K^{(\lambda)}_s(t)$ \\
\midrule
\begin{minipage}{0.15\textwidth} \begin{center} \textbf{SGD$({\gamma})$}
\end{center}
\end{minipage}& $\gamma^2 \lambda^2 e^{-2\gamma \lambda (t-s)}$ &---\\
\midrule
\begin{minipage}{0.15\textwidth} \begin{center}
\textbf{SDAHB$(\gamma, \theta)$} \\ \textcolor{purple}
{(This paper)}\\
 \end{center} \end{minipage}& $\frac{2 \gamma^2 \lambda^2}{\omega} e^{-(t-s)\theta}(1-\cos((t-s) \sqrt{\omega}))$ &\begin{minipage}{0.2\textwidth} 
 $\omega = 4 \lambda \gamma -\theta^2$\end{minipage} \\
\midrule
\begin{minipage}{0.15\textwidth} \begin{center}
\textbf{SDANA$(\gamma_1, \gamma_2, \theta)$}\\
\textcolor{purple}{(This paper)}\\
 \end{center} \end{minipage} & \begin{minipage}{0.45\textwidth} \begin{center} $\frac{\lambda}{\omega} e^{-\lambda \gamma_2 (t-s)} \left (1-\cos((t-s)\sqrt{\lambda \omega} + \vartheta) \right )$ \end{center} \end{minipage} & \begin{minipage}{0.3\textwidth} \begin{center} $\tan(\vartheta) = \frac{(\omega-2\gamma_1) \sqrt{4 \gamma_1-\omega}}{(\omega-2\gamma_1)^2-2 \gamma_1^2}$\\ $\omega = 4 \gamma_1-\gamma_2^2 \lambda$ \end{center} \end{minipage}\\
 \bottomrule
}

\section{Main results} \label{sec:main_results}
In this section, and in light of the Thm.~\ref{thm:hSGD}, we outline how to use this Volterra equation \eqref{eq:diffusion_volterra} to produce average-case analysis, nearly optimal hyperparameters, and exact expressions for the neighborhood and convergence thresholds. For additional details, see Supplementary Materials. 

\subsection{Convolution Volterra convergence analysis: convergence threshold and neighborhood}

For all algorithms considered (SDANA, SDAHB, SHB, SGD), the Volterra equation in Theorem \ref{thm:hSGD} can be expressed in a simpler form, that is, as a \textit{convolution--type Volterra equation}
\begin{equation}\label{eqF:conv}
\Exp_{\HH}[f( \XX_t)] = F(t) + \int_0^t \mathcal{I}(t-s) \Exp_{\HH}[ f(\XX_s)]\dif s
\quad\text{for all}
\quad t \geq 0.
\end{equation}
The forcing function $F$ and the convolution kernel $\mathcal{I}$ are non-negative functions that depend on the spectrum of $\HH$ and SDA parameters (see Table~\ref{table:convolution_kernel} for the kernels of various algorithms).  In the case of SDANA, the kernel is in fact not a convolution Volterra equation, but it can be approximated by one so that it matches the non--convolution equation as $t \to \infty$. 

First, the forcing function $F$ will in all cases be bounded, and in fact it will converge as $t \to \infty$ to a deterministic value,
\begin{equation}\label{eqF:force}
F(t) \underset{t\to\infty}{\longrightarrow} 
{\frac{\widetilde{R}\mu_{\HH}(\{0\})}{2}}
=
\frac{\widetilde{R}\dim(\ker(\HH))}{2n}.
\end{equation}
Here $\mu_{\HH}$ is the empirical spectral measure \eqref{eq:ESM} which exists for even non-random matrices. 
It follows that the solution of \eqref{eqF:conv} remains bounded if the 
norm $\|\mathcal{I}\| = \int_0^\infty \mathcal{I}(t) \dif t$ is less than $1$.  
\begin{theorem}[Convergence threshold and limiting loss] \label{thmF:norm}
    If the norm $\|\mathcal{I}\| < 1$, the algorithm is convergent in that 
    \[
    \Exp_{\HH} [f(\XX_t)] \underset{t\to\infty}{\longrightarrow}
    \frac{\widetilde{R}\dim(\ker(\HH))}{2n(1-\|\mathcal{I}\|)} \qquad \text{(limiting loss)}.
    \]
\end{theorem}
\begin{wrapfigure}[21]{r}{0.4\textwidth}
 {\centering 
   \vspace{-1.0cm}
     \includegraphics[width = 0.9\linewidth]{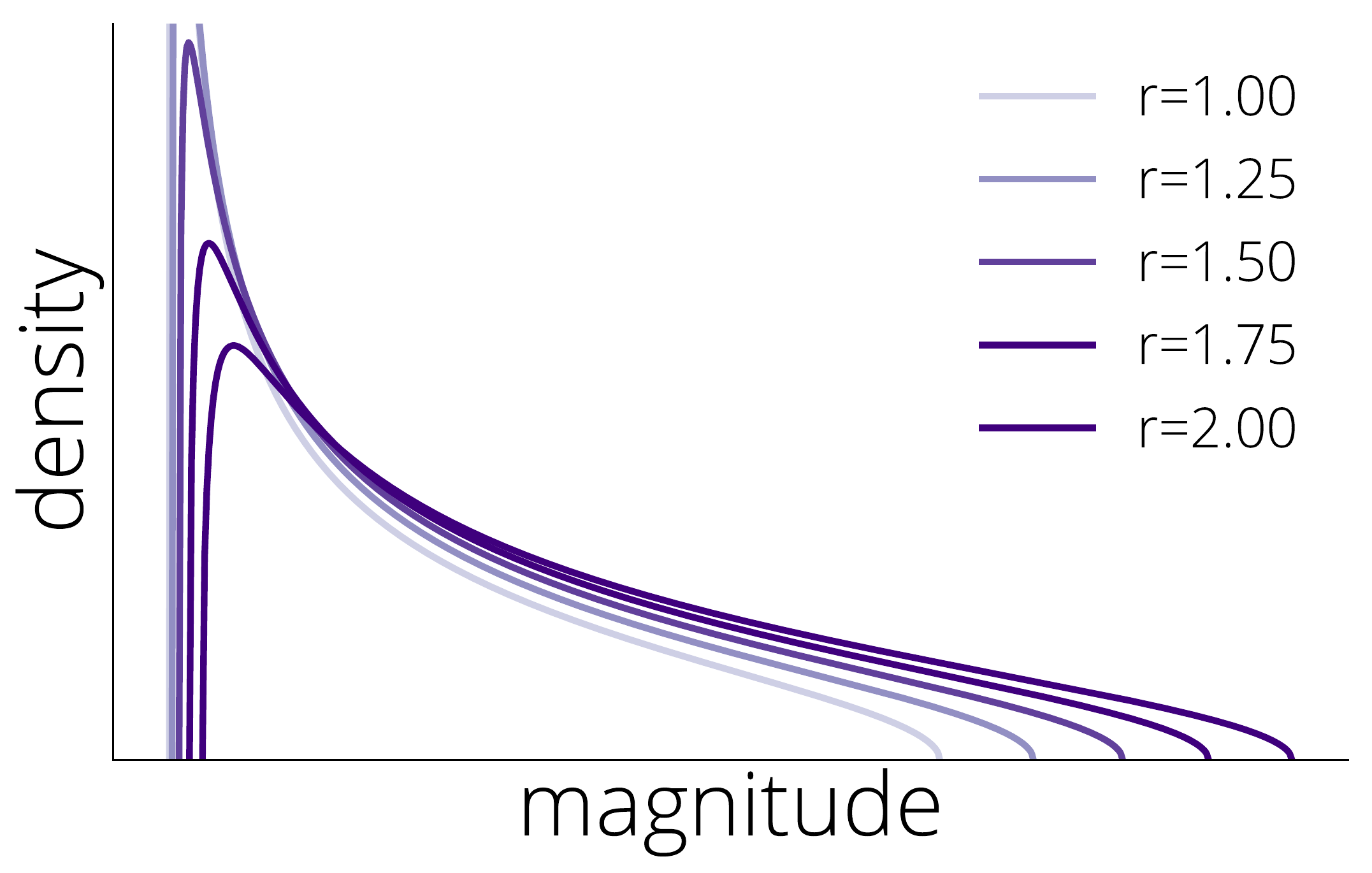}
   \vspace{-0.3cm}
     \caption{The \emph{Marchenko-Pastur law}($r$). Varying $r = d/n$.}}
    \label{fig:MP}
\end{wrapfigure}
This theorem gives a convergence threshold for all algorithms in Table~\ref{table:stochastic_algorithms} based only on the norm of the kernel of the Volterra equation, which is easily computable (see Table~\ref{tab:complexity_main}). 

\begin{table}[t!]
    \centering
        \caption{\textbf{Asymptotic average-case convergence guarantees} for $\Exp_{\HH}[f(\XX_t)] \vspace{-0em} - \frac{\widetilde{R}\dim(\ker(\HH))}{2n(1-\|\mathcal{I}\|)}$ (last iterate) under default parameters (see Table~\ref{table:stochastic_algorithms}) for the isotropic features model. The norm of the kernel is controlled by two values: the normalized trace of the matrix $m = \sum_{i=1}^n \|\aa_i\|^2 / n$ and the mass of the spectral measure (empirical or limiting) at $0$ which we denote by $p = \text{dim}(\text{ker}(\HH)) / n$. Average-case complexity is strictly better than the worst-case complexity, in some cases by a factor $\gamma$ vs. $\gamma^2$. As in \citet{paquetteSGD2021}, the worst-case rates in non-strongly convex setting have dimension dependent constants due to the distance to the optimum $\|\xx^\star-\xx_0\|^2 \approx d$ which appears in the bounds. SDANA obtains an accelerated average-case rate in the non-strongly convex case over SGD while matching the average-case rate of SGD in strongly convex regime. These rates are achieved without changing hyperparameters in SDANA. For worst-case rates, see \cite[Theorem 4.6]{ bottou2018optimization} \cite[Theorem 2.1]{ghadimi2013stochastic}; $\lambda^{+}$ can be replaced by the max-$\ell^2$-row-norm. }
    \label{tab:complexity_main}
    \begin{tabular}[]{lccc}
      \toprule
      & \textbf{Kernel, $ \bm \| \mathcal{I} \|$} & \begin{minipage}{0.27\textwidth} \begin{center} \textbf{Strongly convex} \end{center} \end{minipage} & \begin{minipage}{0.2\textwidth} \begin{center} \textbf{Non-strongly convex} \vspace{0.1em} \end{center} \end{minipage} \\
      \midrule        
      \textbf{SGD}$(\gamma)$ \quad 
      \begin{minipage}{0.055\textwidth} Worst\vspace{0.2em}
      \end{minipage}
      & 
      & 
      \begin{minipage}{0.3\textwidth} \begin{center} $\text{exp}(-\gamma t \lambda^- + \tfrac{\gamma^2}{2} (\lambda^+)^2 t)$ \vspace{0.2em} \end{center}\end{minipage} 
      &  \begin{minipage}{0.2\textwidth} \begin{center} $(R + \widetilde{R} \cdot \textcolor{purple}{d}) \cdot \frac{1}{t}$\vspace{0.2em}
	\end{center} \end{minipage} \\
      \textbf{SGD}$(\gamma)$ \quad 
	Avg
      &  \begin{minipage}{0.13\textwidth} \begin{center} 
	  $\frac{ \gamma}{2} m$ \\
	  {\footnotesize (Eq.~\eqref{q:sgd}) }
	  \end{center}\end{minipage}
      & \begin{minipage}{0.3\textwidth} \begin{center}
	  $\text{exp}(-\gamma t \lambda^-)$ \\
	  {\footnotesize (Lem.~\ref{q:default})}
	   \end{center}\end{minipage} 
      &  
      \begin{minipage}{0.2\textwidth} \begin{center}
	  $Rt^{-3/2} + \widetilde{R} t^{-1/2}$ 
	  \\
	  {(\footnotesize Eq.~\eqref{q:sgdht})}
	  \end{center}\end{minipage}
	  \\
      \midrule

      \textbf{SDAHB}$(\gamma, \theta)$ 
      & \begin{minipage}{0.13\textwidth} \begin{center} $\frac{\gamma}{2\theta} m$ \vspace{0.4em}\\
	  {\footnotesize (Eq.~\eqref{eqG:norm}) }\end{center} \end{minipage} 
      & \begin{minipage}{0.21 \textwidth} \begin{center}$\exp( -t \frac{\gamma \lambda^- \theta}{2\gamma \lambda^- + \theta^2} )$ \vspace{0.2em} \\ {\footnotesize (Prop.~\ref{propG:default}) } \vspace{0.4em} \end{center} \end{minipage} 
      & \begin{minipage}{0.2\textwidth} \begin{center}
	  $Rt^{-3/2} + \widetilde{R} t^{-1/2}$ 
	  \\
	  {(\footnotesize Eq.~\eqref{eqG:sdahb})}
	\end{center}\end{minipage}
      \\
      \midrule
      \textbf{SDANA}$(\gamma_1, \gamma_2, \theta)$ 
      & \begin{minipage}{0.2\textwidth} \begin{center}  $\frac{\gamma_1}{2 \gamma_2} (1-p) + \frac{\gamma_2}{2} m$\\ {\footnotesize (Eq. \eqref{eqE:convergence}) } \end{center} \end{minipage} 
      & \begin{minipage}{0.2\textwidth} \begin{center} $\exp( -t\frac{3 \gamma_1\gamma_2 \lambda^{-}}{2\gamma_2^2\lambda^{-} + 4{\gamma_1}})$\\ {\footnotesize (Cor.~\ref{corE:stepsize})} \vspace{0.4em} \end{center} \end{minipage} 
      & \begin{minipage}{0.2\textwidth} \begin{center}
	  $Rt^{-3} + \widetilde{R} t^{-1}$ 
	  \\
	  {(\footnotesize Prop.~\ref{prop:sqasymp})}
	\end{center}\end{minipage}
      \\
      \bottomrule
    \end{tabular}
\end{table}
\subsection{Average case analysis} 
\paragraph{Limiting spectral measures.} Average-case complexity looks at the typical behavior of an algorithm when some of its inputs are chosen at random.  
To formulate an average case analysis that is representative of what is seen in a large scale optimization problem, we will take a limit of the empirical spectral measure as $n$ and $d$ are taken to infinity.  So, we suppose that the following holds:
\begin{assumption}[Spectral limit] \label{assumption: spectral_density}
Let $\AA$ be an $n \times d$ matrix drawn from a family of random matrices such that the number of features, $d$, tends to infinity proportionally to the size of the data set, $n$, so that $\tfrac{d}{n} \to r \in (0, \infty)$; and suppose these random matrices satisfy the following. 
    
    1. The eigenvalue distribution of $\HH=\AA \AA^T$ converges to a deterministic limit $\mu$ with compact support.  Formally, the empirical spectral measure (ESM) converges weakly to $\mu$, in that for all bounded continuous $g : \RR \to \RR$
    \begin{equation} \label{eq:ESM_convergence}
    \frac{1}{n}\sum_{i=1}^n g(\lambda_i)
    \Prto[n]
    \int_0^\infty g(\lambda)\mu(\dif \lambda).
    \end{equation}
    2. The largest eigenvalue $\lambda_{\HH}^+$ of $\HH$ converges to the largest element $\lambda^+$ in the support of $\mu$, i.e. $\lambda_{\HH}^+ \Prto[n] \lambda^+.$
\end{assumption}

\begin{figure}[t!]
    \centering
    \includegraphics[scale = 0.16]{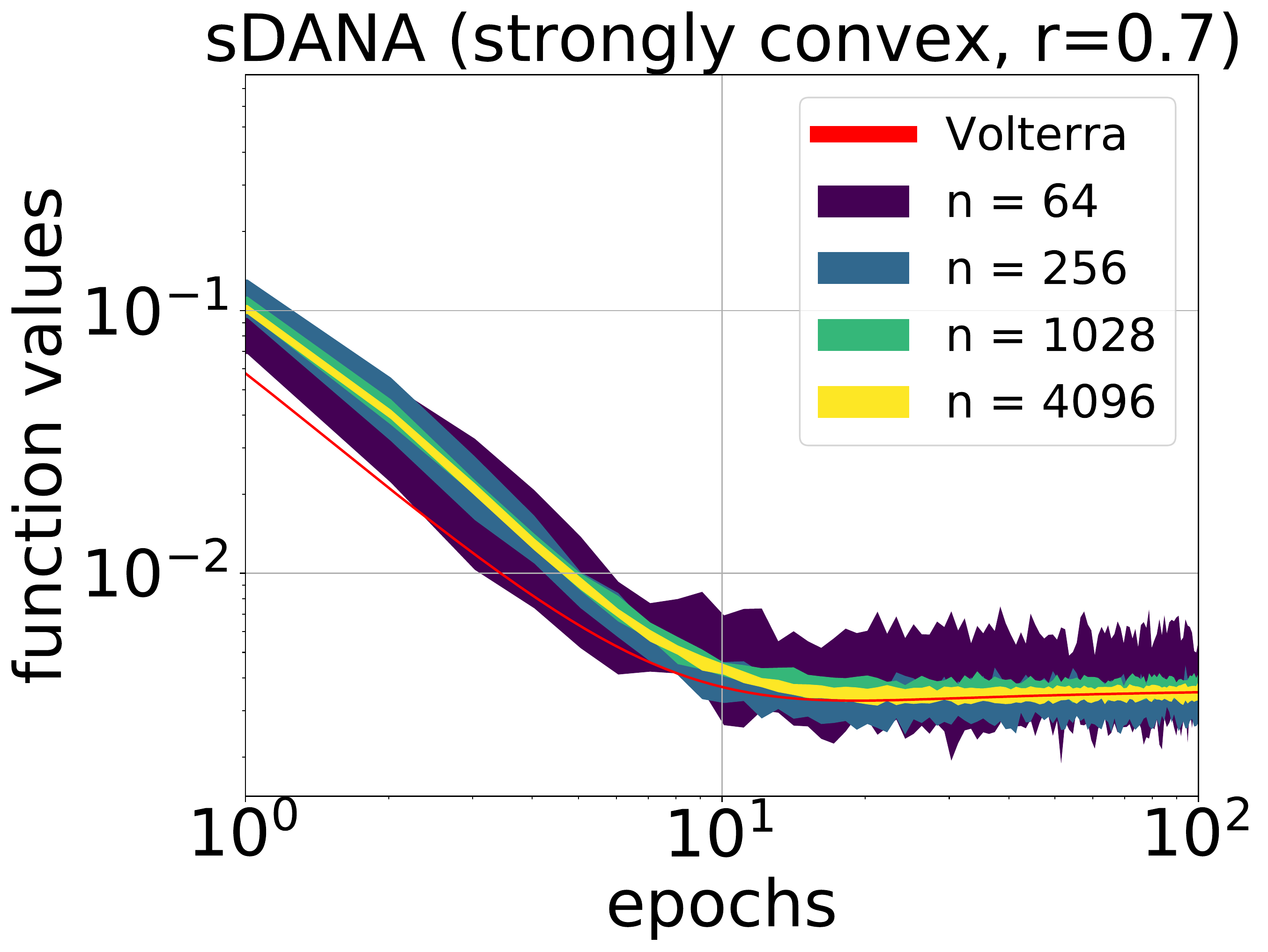}
    \includegraphics[scale = 0.16]{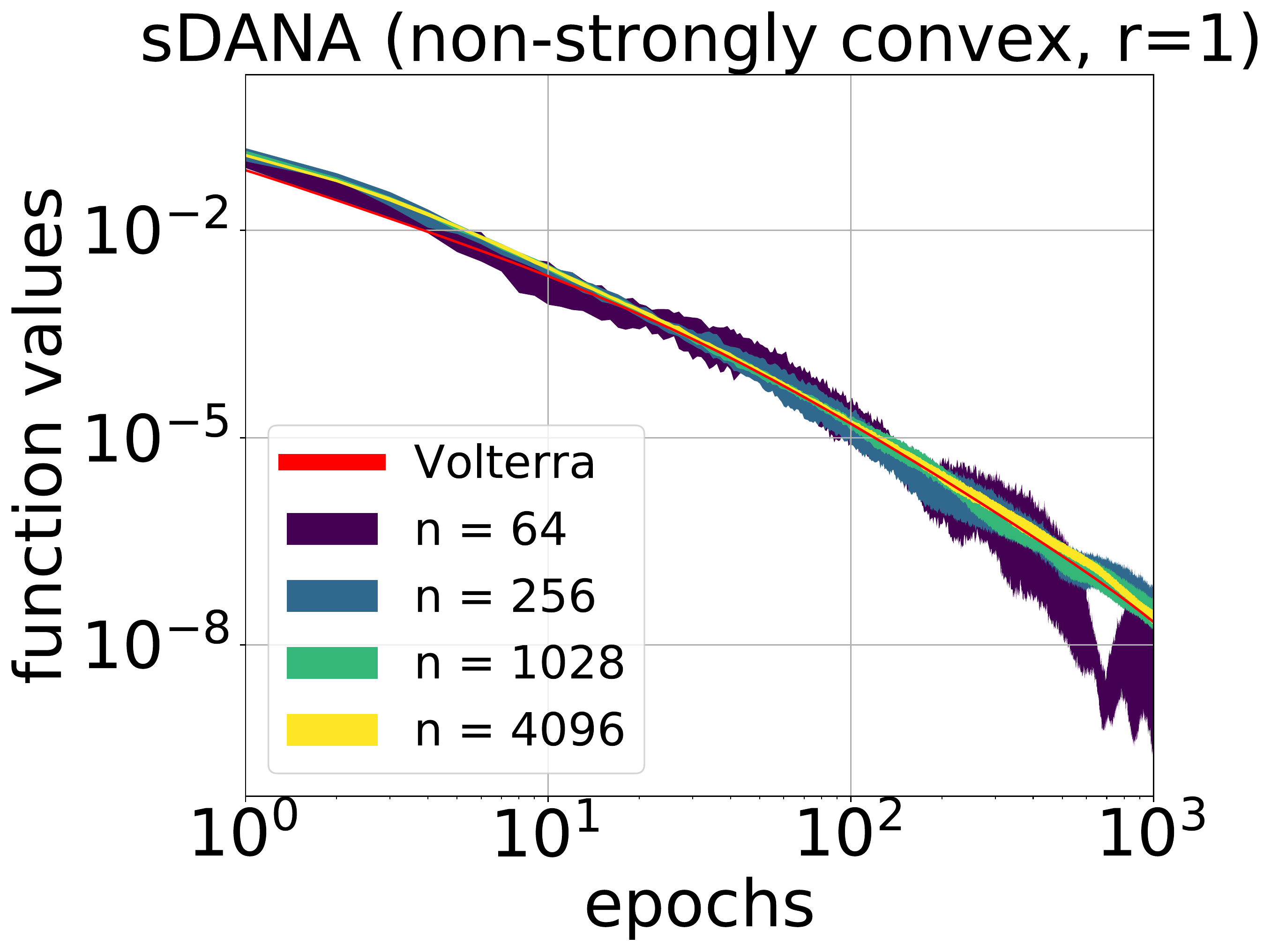}
    \includegraphics[scale = 0.16]{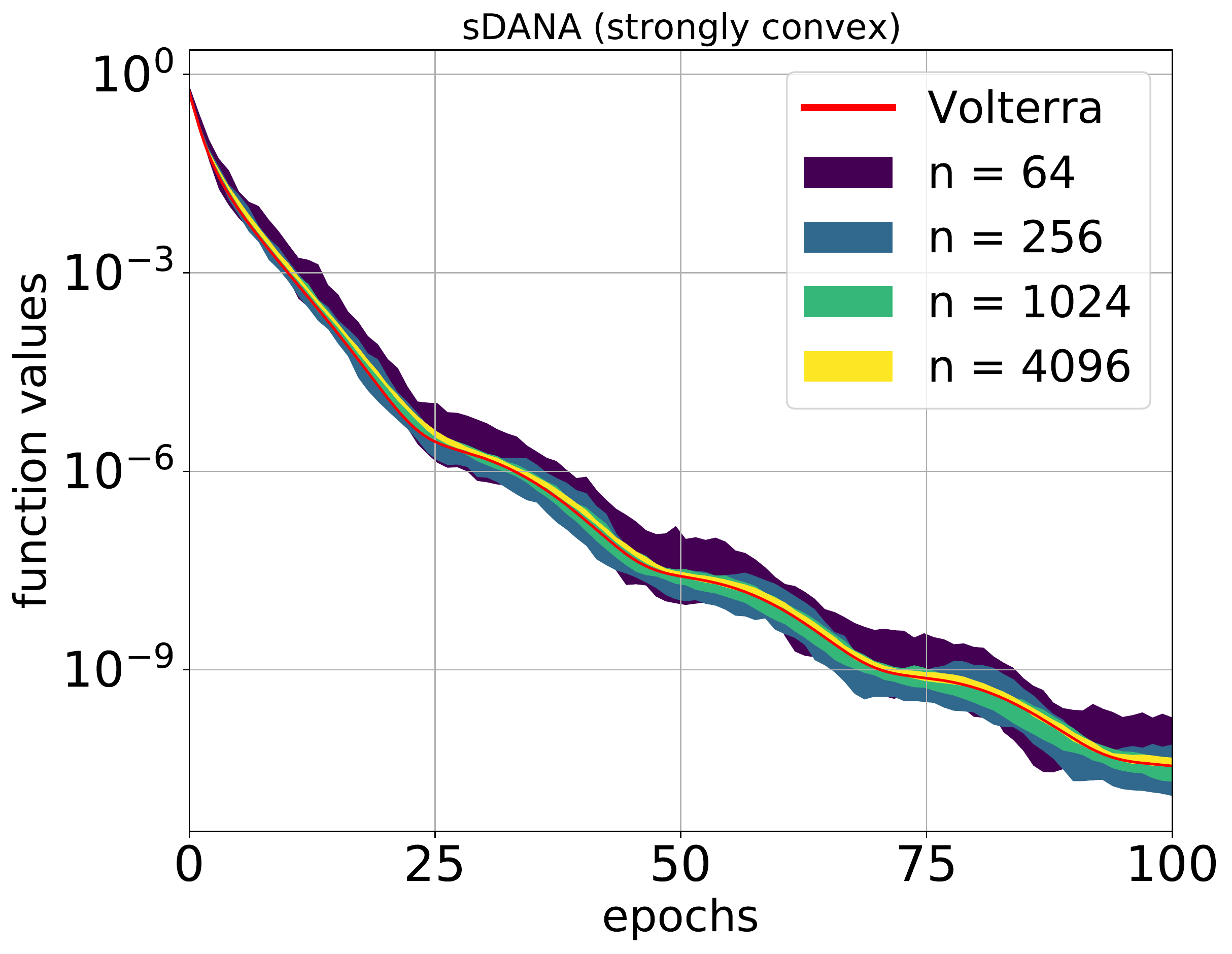}  
    \caption{\textbf{Concentration of SDANA.} 80\% confidence interval on 10 runs with default parameters on Gaussian random least squares problem \eqref{eq:lsq}, $d/n = r$, with noise $\widetilde{R}=0.01$ and signal $R = 1$. The convolution-type Volterra equation (red, \eqref{eqF:conv}) predicts the performance of SDANA and it reflects the oscillatory trajectories typically seen in momentum methods due to overshooting. Because the convolution Volterra is only an approximation to the kernel, there is always an initial mismatch between actual runs of SDANA and the Volterra solution. As $t \to \infty$, the convolution-type Volterra equation better approximates SDANA. For more details on numerical simulations see App.~\ref{sec: numerical_simulation}.}
    \label{fig:concentration}
\end{figure}

This assumption is typical in random matrix theory.  An important example is \emph{the isotropic features model}, which is a random $n \times d$ matrix $\AA$ whose every entry is sampled from a common, mean $0$, variance $\tfrac{1}{d}$ distribution with fourth moment $\mathcal{O}(d^{-2})$, such as a Gaussian $N(0,\tfrac{1}{d})$.  In this case, the ESM $\mu_{\HH}$ of $\HH = \AA\AA^T$ converges to the
 Marchenko-Pastur law (see Figure \ref{fig:MP}):
\begin{equation} \begin{gathered} \label{eq:MP} \dif \MP(\lambda) \defas \delta_0(\lambda) \max\{1-{r}, 0\} + \frac{r\sqrt{(\lambda-\lambda^-)(\lambda^+-\lambda)}}{2 \pi \lambda} 1_{[\lambda^-, \lambda^+]}\,,\\
\text{where} \qquad \lambda^- \defas (1 - \sqrt{\tfrac{1}{r}})^2 \quad \text{and} \quad \lambda^+ \defas (1+ \sqrt{\tfrac{1}{r}})^2\,.
\end{gathered} \end{equation}


More generally, the convergence of the spectral measure of matrices drawn from a consistent ensemble is well studied in random matrix theory, and for many random matrix ensembles the limiting spectral measure is known. In the machine learning literature, it has been shown that the spectrum of the Hessians of neural networks share characteristics with the limiting spectral distributions found in classical random matrix theory \citep{dauphin2014identifying, papyan2018the, sagun2016eigenvalues, behrooz2019investigation,martin2018implicit,pennington2017geometry, liao2020Random, granziol2020learning}.

\paragraph{Complexity analysis.} The forcing function and the convolution kernel both converge under Assumption \ref{assumption: spectral_density}, and the result is that
\[
\lim_{n \to \infty} \Exp_{\HH}[f(\XX_t)] = \psi(t)
\quad\text{where}
\quad
\psi(t) = F_\mu(t) + \int_0^\infty \mathcal{I}_\mu(t-s) \psi(s) \,\dif s
\quad\text{for all}
\quad t \geq 0.
\]
The forcing function and interaction kernel are given as integrals against the limit measure (such as \eqref{eq:MP} in the case of isotropic features) and
\[
F_\mu(t) \defas \int_0^\infty 
(R\lambda +\widetilde{R}) G^{(\lambda)}(t) \mu( \dif \lambda )
\quad\text{and}\quad
\mathcal{I}_\mu(t) 
\defas \int_0^\infty 
K^{(\lambda)}(t) \mu(\dif \lambda).
\]
The kernel norm still determines the convergence properties of the Volterra equation.  In particular, to have convergence of the algorithm, we need that $\|\mathcal{I}_\mu\| < 1$ and just like in the finite-$n$ case (see Lemma \ref{q:limit})
\[
\psi(t)
\underset{t \to \infty}{\longrightarrow}
\psi(\infty)\defas
\frac{\widetilde{R}\mu(\{0\})}{2(1-\|\mathcal{I}_\mu\|)}.
\]

To discuss average-case rates, we use the function $\psi(t)$. We consider separately the regimes when the problem is strongly convex and not.
Having taken the limit, we say the problem is strongly convex if the intersection of the support of $\mu$ with $(0,\infty)$ is closed.  Intuitively, this says there is a gap between $0$ and the next smallest eigenvalue $\lambda^-$ of the hessian (strictly speaking it allows a vanishing fraction of the eigenvalues to approach $0$). 

\begin{wrapfigure}[36]{r}{0.5\textwidth}
 \centering
 \vspace{-1.1cm}
     \includegraphics[scale = 0.23]{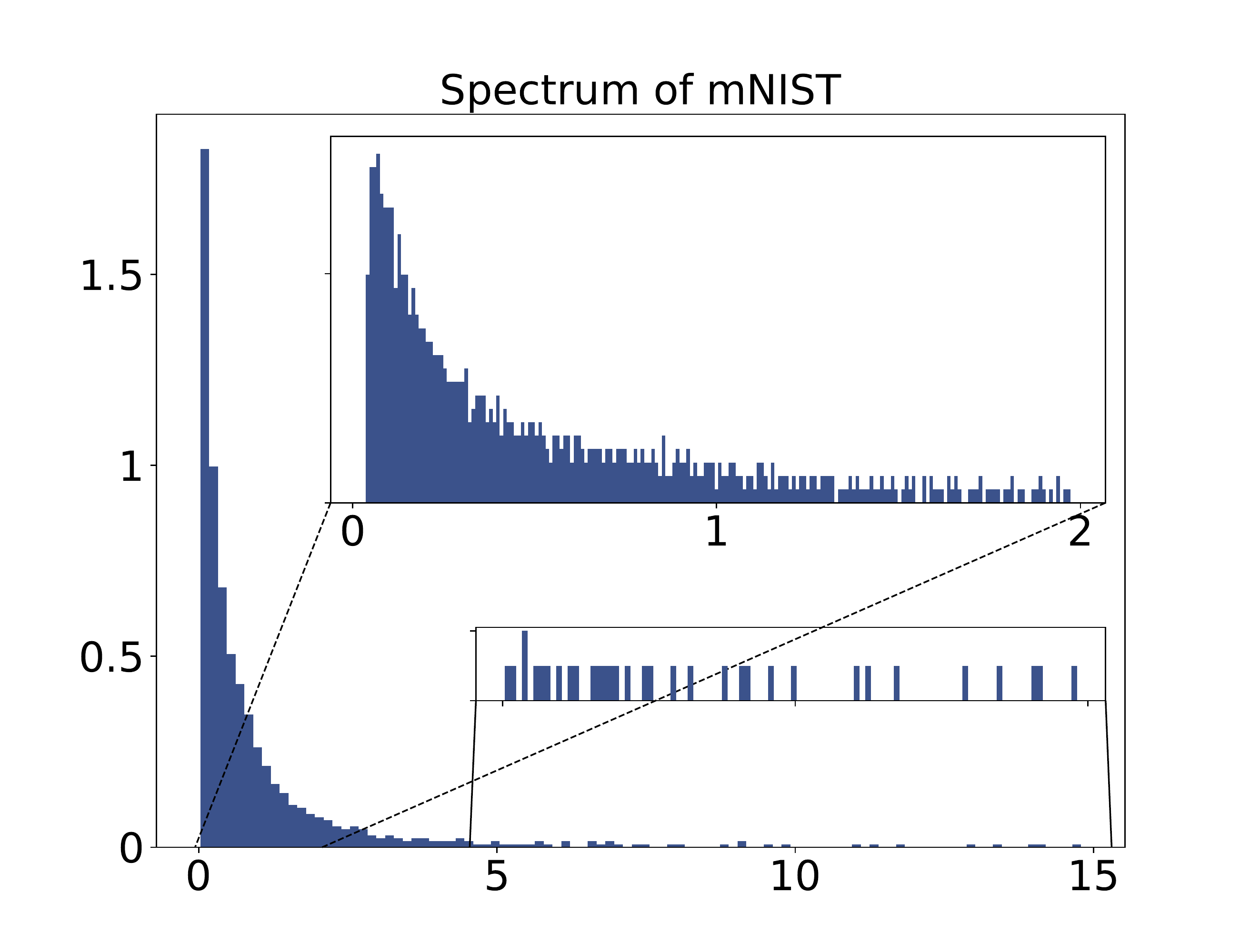}\vspace{-0.5cm}
     \includegraphics[scale = 0.23]{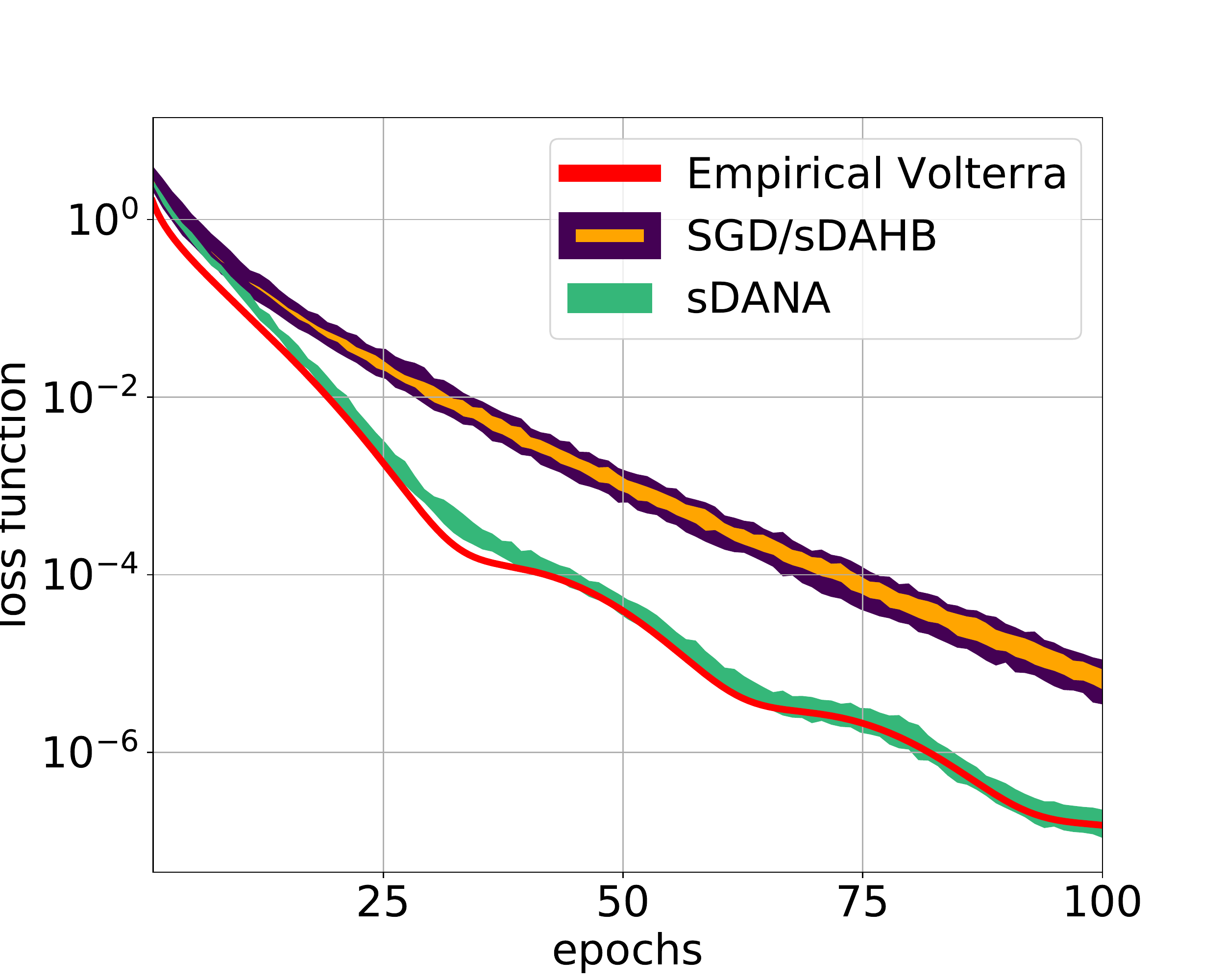}
     \vspace{-0.1cm}
     \caption{\textbf{SDANA \& SGD vs Theory on MNIST.} 
     MNIST ($60000\times 28 \times 28$ images) \citep{lecun2010mnist} is reshaped into $10$ matrices of dimension $1000\times 4704$, representing $1000$ samples of groups of $6$ digits (preconditioned to have centered rows of norm-1).  First digit of each 6 is chosen to be the target $\bb$. Algorithms were run 10 times with default parameters (without tuning) to solve \eqref{eq:lsq}. 80\%--confidence interval is displayed. Volterra (SDANA) is generated with eigenvalues from the first MNIST data matrix (top pane, $\lambda^- = 0.041$). Volterra predicts the convergent behavior of SDANA in this non-idealized setting. SDANA outperforms equivalent SGD/SDAHB. See also Appendix \ref{sec: numerical_simulation}.
     }
     \label{fig:MNIST}
\end{wrapfigure}

In the non-strongly convex case the average-case complexity is relatively simple to compute. The rate of convergence of $\psi(t)-\psi(\infty) \to 0$ 
is only determined by the rate of of convergence of the forcing function 
$F_\mu(t)-F_\mu(\infty)$.
This decays like $t^{-\beta}$ where $\beta$ in turn is controlled by the exponent $\alpha$ at which $\mu( (0,\varepsilon] ) \asymp
\epsilon^\alpha$ as $\varepsilon \to 0$ (see Lemma \ref{q:rvrate}). Particular if there are more small eigenvalues, the rate is slowed.  In Table~\ref{tab:complexity_main}, the rates are reported for $\alpha = \tfrac 12$. This is the typical behavior for random matrix distributions with a ``hard-edge,'' such as Marchenko--Pastur with aspect ratio $r=1$.

In the strongly convex case, $\lambda^{-} > 0,$ the kernel $\mathcal{I}_\mu$ plays a larger role, in that it may slow down the convergence rate.  In particular, if it exists, we define the \emph{Malthusian exponent} $\lambda^*$ as the solution of 
\[
\int_0^\infty e^{\lambda^* t} \mathcal{I}_\mu(t) = 1.
\]

The rate of convergence of $\psi$ to $0$ (at exponential scale) will then be the slower of $F_\mu(t)$ and $e^{-\lambda^* t}$ (see Lemma \ref{q:errate}).  In the case of SGD on Marchenko--Pastur, the exact value of $\lambda^*$ is worked out in exact form in \cite{paquetteSGD2021}.  By bounding these Malthusian exponents, we produce the rate guarantees in Table \ref{tab:complexity_main} (see Cor.~\ref{corE:stepsize} and Prop.~\ref{propG:default} for the bounds in the Appendix). The \textbf{default parameters} are chosen so that its linear rate is no slower, by a factor of $4$ than the fastest possible rate for an algorithm having optimized over all step size choices. This is achieved by lower bounding the Malthusian exponent at the default parameters and upper bounding the optimal rate by minimizing $F_\mu(t)-F_\mu(\infty)$ over all convergent parameters.

\paragraph{Conclusions from the analysis of homogenized SGD.} 
The SHB algorithm is a special instance of SDAHB with parameter choices
\(
n \theta^{\text{shb}} = \theta^{\text{sdahb}}
\quad\text{and}\quad
n\gamma^{\text{shb}}
=
\gamma^{\text{sdahb}}.
\)
By evaluating the kernels for SDAHB in the large $n$ limit, it is easily seen that the homogenized SGD equations for $\XX^{\text{shb}}_t$ and $\XX^{\text{sgd}}_t$ satisfy
\begin{equation}\label{eq:shbsgd}
|\Exp_{\HH}[ f(\XX^{\text{shb}}_t)] -\Exp_{\HH}[f(\XX^{\text{sgd}}_t)]| 
\underset{n \to \infty}{\longrightarrow} 0,
\end{equation}
see (Thm.~\ref{thmG:degeneration}).  On the other hand SDAHB with default parameters is always strictly faster (for sufficiently large $\theta$) than tuned SGD, but its linear rate is never more than a factor of $2$ faster than SGD (Prop.~\ref{prop:SDAHB}).  It also does not substantially improve over SGD in non-strongly convex case. In contrast, the dimension adjusted Nesterov acceleration (SDANA) greatly (and provably, using homogenized SGD) improves over SGD in  non-strongly convex case (see Prop.~\ref{prop:sqasymp}), while remaining linear (and nearly as fast as SGD) in the convex case (Cor.~\ref{corE:stepsize}).  Furthermore, the predictions of homogenized SGD are born out even on real data (Fig.~\ref{fig:MNIST}), which is a non-idealized setting that does not verify the assumptions we imposed for the theoretical analysis.

\paragraph{Future directions.} We would like to explore the applicability of homogenized SGD to other datasets and other convex losses as well as generalizing the theoretical setting under which homogenized SGD applies (see the discussion below Thm.~\ref{thm:SGDthm}). Moreover, we would like to test and extend SDANA to non-convex problems and extend homogenized SGD to non-convex settings.

\paragraph{Funding Transparency Statement.} C. Paquette's research was supported by CIFAR AI Chair, MILA. Research by E. Paquette was supported by a Discovery Grant from the Natural Science and Engineering Council (NSERC). Additional revenues related to this work: C. Paquette has part-time employment at Google Research, Brain Team, Montreal, QC. 

\bibliographystyle{plainnat}
\bibliography{reference}

\begin{thebibliography}{45}
\providecommand{\natexlab}[1]{#1}
\providecommand{\url}[1]{\texttt{#1}}
\expandafter\ifx\csname urlstyle\endcsname\relax
  \providecommand{\doi}[1]{doi: #1}\else
  \providecommand{\doi}{doi: \begingroup \urlstyle{rm}\Url}\fi

\bibitem[Allen-Zhu(2017)]{allen2017katyusha}
Z.~Allen-Zhu.
\newblock \href{https://jmlr.org/papers/volume18/16-410/16-410.pdf}{Katyusha:
  The first direct acceleration of stochastic gradient methods}.
\newblock \emph{The Journal of Machine Learning Research}, 18\penalty0
  (1):\penalty0 8194--8244, 2017.

\bibitem[Asmussen(2003)]{Asmussen}
S.~Asmussen.
\newblock \emph{\href{https://link.springer.com/book/10.1007/b97236}{Applied
  probability and queues}}, volume~51 of \emph{Applications of Mathematics (New
  York)}.
\newblock Springer-Verlag, New York, second edition, 2003.
\newblock Stochastic Modelling and Applied Probability.

\bibitem[{Assran} and {Rabbat}(2020)]{assran2020on}
M.~{Assran} and M.~{Rabbat}.
\newblock \href{https://arxiv.org/abs/2002.12414}{On the Convergence of
  Nesterov's Accelerated Gradient Method in Stochastic Settings}.
\newblock In \emph{Proceedings of the 37th International Conference on Machine
  Learning (ICML)}, 2020.

\bibitem[Aybat et~al.(2018)Aybat, {Fallah}, {Gurbuzbalaban}, and
  {Ozdaglar}]{aybat2018robust}
N.~Aybat, A.~{Fallah}, M.~{Gurbuzbalaban}, and A.~{Ozdaglar}.
\newblock {\href{https://arxiv.org/abs/1805.10579}{Robust Accelerated Gradient
  Methods for Smooth Strongly Convex Functions}}.
\newblock \emph{arXiv preprints arXiv:1805.10579}, 2018.

\bibitem[Behrooz et~al.(2019)Behrooz, Krishnan, and
  Xiao]{behrooz2019investigation}
G.~Behrooz, S.~Krishnan, and Y.~Xiao.
\newblock \href{http://proceedings.mlr.press/v97/ghorbani19b.html}{An
  Investigation into Neural Net Optimization via Hessian Eigenvalue Density}.
\newblock \emph{Proceedings of the 36th International Conference on Machine
  Learning (ICML)}, 2019.

\bibitem[Bottou et~al.(2018)Bottou, Curtis, and
  Nocedal]{bottou2018optimization}
L.~Bottou, F.E. Curtis, and J.~Nocedal.
\newblock \href{https://epubs.siam.org/doi/10.1137/16M1080173}{Optimization
  methods for large-scale machine learning}.
\newblock \emph{SIAM Review}, 60\penalty0 (2):\penalty0 223--311, 2018.

\bibitem[{Can} et~al.(2019){Can}, {Gurbuzbalaban}, and
  {Zhu}]{can2019accelerated}
B.~{Can}, M.~{Gurbuzbalaban}, and L.~{Zhu}.
\newblock {\href{https://arxiv.org/abs/1901.07445}{Accelerated Linear
  Convergence of Stochastic Momentum Methods in Wasserstein Distances}}.
\newblock \emph{arXiv preprints arXiv:1901.07445}, 2019.

\bibitem[Coddington and Levinson(1955)]{coddington1955theory}
E.~Coddington and N.~Levinson.
\newblock
  \emph{\href{https://books.google.ca/books/about/Theory_of_Ordinary_Differential_Equation.html?id=LvNQAAAAMAAJ&redir_esc=y}{Theory
  of ordinary differential equations}}.
\newblock McGraw-Hill Book Company, Inc., New York-Toronto-London, 1955.

\bibitem[Dauphin et~al.(2014)Dauphin, Pascanu, Gulcehre, Cho, Ganguli, and
  Bengio]{dauphin2014identifying}
Y.N. Dauphin, R.~Pascanu, C.~Gulcehre, K.~Cho, S.~Ganguli, and Y.~Bengio.
\newblock
  \href{https://papers.nips.cc/paper/5486-identifying-and-attacking-the-saddle-point-problem-in-high-dimensional-non-convex-optimization}{Identifying
  and attacking the saddle point problem in high-dimensional non-convex
  optimization}.
\newblock In \emph{Advances in Neural Information Processing Systems
  (NeurIPS)}, 2014.

\bibitem[Flammarion and Bach(2015)]{flammarion2015from}
N.~Flammarion and F.~Bach.
\newblock \href{http://proceedings.mlr.press/v40/Flammarion15.html}{From
  Averaging to Acceleration, There is Only a Step-size}.
\newblock In \emph{Proceedings of The 28th Conference on Learning Theory
  (COLT)}, volume~40 of \emph{Proceedings of Machine Learning Research}, pages
  658--695. PMLR, 03--06 Jul 2015.

\bibitem[{Gadat} et~al.(2016){Gadat}, {Panloup}, and
  {Saadane}]{gadat2016stochastic}
S.~{Gadat}, F.~{Panloup}, and S.~{Saadane}.
\newblock {\href{https://arxiv.org/abs/1609.04228}{Stochastic Heavy Ball}}.
\newblock \emph{arXiv preprints arXiv:1609.04228}, 2016.

\bibitem[Gerbelot et~al.(2020)Gerbelot, Abbara, and
  Krzakala]{gerbelot2020asymptotic}
C.~Gerbelot, A.~Abbara, and F.~Krzakala.
\newblock
  \href{http://proceedings.mlr.press/v125/gerbelot20a/gerbelot20a.pdf}{Asymptotic
  Errors for High-Dimensional Convex Penalized Linear Regression beyond
  Gaussian Matrices}.
\newblock In \emph{Proceedings of Thirty Third Conference on Learning Theory
  (COLT)}, volume 125 of \emph{Proceedings of Machine Learning Research}, pages
  1682--1713. PMLR, 2020.

\bibitem[Ghadimi and Lan(2012)]{ghadimi2012optimal}
S.~Ghadimi and G.~Lan.
\newblock \href{https://epubs.siam.org/doi/10.1137/110848864}{Optimal
  stochastic approximation algorithms for strongly convex stochastic composite
  optimization {I}: {A} generic algorithmic framework}.
\newblock \emph{SIAM J. Optim.}, 22\penalty0 (4):\penalty0 1469--1492, 2012.
\newblock ISSN 1052-6234.
\newblock \doi{10.1137/110848864}.

\bibitem[Ghadimi and Lan(2013{\natexlab{a}})]{ghadimi2013optimal}
S.~Ghadimi and G.~Lan.
\newblock
  \href{https://epubs.siam.org/doi/abs/10.1137/110848876?journalCode=sjope8}{Optimal
  stochastic approximation algorithms for strongly convex stochastic composite
  optimization, {II}: {S}hrinking procedures and optimal algorithms}.
\newblock \emph{SIAM J. Optim.}, 23\penalty0 (4):\penalty0 2061--2089,
  2013{\natexlab{a}}.
\newblock \doi{10.1137/110848876}.

\bibitem[Ghadimi and Lan(2013{\natexlab{b}})]{ghadimi2013stochastic}
S.~Ghadimi and G.~Lan.
\newblock \href{https://arxiv.org/pdf/1309.5549.pdf}{Stochastic first- and
  zeroth-order methods for nonconvex stochastic programming}.
\newblock \emph{SIAM J. Optim.}, 23\penalty0 (4):\penalty0 2341--2368,
  2013{\natexlab{b}}.

\bibitem[Ghorbani et~al.(2019)Ghorbani, Krishnan, and
  Xiao]{ghorbani2019investigation}
B.~Ghorbani, S.~Krishnan, and Y.~Xiao.
\newblock An investigation into neural net optimization via hessian eigenvalue
  density.
\newblock In \emph{International Conference on Machine Learning (ICML)}, pages
  2232--2241. PMLR, 2019.

\bibitem[{Granziol} et~al.(2020){Granziol}, {Zohren}, and
  {Roberts}]{granziol2020learning}
D.~{Granziol}, S.~{Zohren}, and S.~{Roberts}.
\newblock \href{https://arxiv.org/pdf/2006.09092.pdf}{Learning Rates as a
  Function of Batch Size: A Random Matrix Theory Approach to Neural Network
  Training}.
\newblock \emph{arXiv preprint arXiv:2006.09092}, 2020.

\bibitem[Gripenberg(1980)]{gripenberg1980volterra}
G.~Gripenberg.
\newblock \href{http://www.math.kobe-u.ac.jp/~fe/xml/mr0586277.xml}{On the
  resolvents of nonconvolution {V}olterra kernels}.
\newblock \emph{Funkcial. Ekvac.}, 23\penalty0 (1):\penalty0 83--95, 1980.
\newblock ISSN 0532-8721.
\newblock URL \url{http://www.math.kobe-u.ac.jp/~fe/xml/mr0586277.xml}.

\bibitem[Gripenberg et~al.(1990)Gripenberg, Londen, and
  Staffans]{gripenberg1990volterra}
G.~Gripenberg, S.O. Londen, and O.~Staffans.
\newblock \emph{\href{https://books.google.ca/books?id=FxZgYfkAA48C}{Volterra
  Integral and Functional Equations}}.
\newblock Encyclopedia of Mathematics and its Applications. Cambridge
  University Press, 1990.

\bibitem[Hastie et~al.(2019)Hastie, Montanari, Rosset, and
  Tibshirani]{hastie2019surprises}
T.~Hastie, A.~Montanari, S.~Rosset, and R.J. Tibshirani.
\newblock \href{https://arxiv.org/abs/1903.08560}{Surprises in high-dimensional
  ridgeless least squares interpolation}.
\newblock \emph{arXiv preprint arXiv:1903.08560}, 2019.

\bibitem[Kidambi et~al.(2018)Kidambi, Netrapalli, Jain, and
  Kakade]{kidambi2018on}
R.~Kidambi, P.~Netrapalli, P.~Jain, and S.~Kakade.
\newblock On the insufficiency of existing momentum schemes for stochastic
  optimization.
\newblock In \emph{\href{https://ieeexplore.ieee.org/document/8503173}{2018
  Information Theory and Applications Workshop (ITA)}}, pages 1--9, 2018.
\newblock \doi{10.1109/ITA.2018.8503173}.

\bibitem[Kulunchakov and Mairal(2019)]{kulunchakov2019generic}
A.~Kulunchakov and J.~Mairal.
\newblock A generic acceleration framework for stochastic composite
  optimization.
\newblock In
  \emph{\href{https://proceedings.neurips.cc/paper/2019/file/4a1c2f4dcf2bf76b6b278ae40875d536-Paper.pdf}{Advances
  in Neural Information Processing Systems (NeurIPS)}}, volume~32, 2019.

\bibitem[{Laborde} and {Oberman}(2019)]{laborde2019nesterov}
M.~{Laborde} and A.~{Oberman}.
\newblock {\href{https://arxiv.org/pdf/1908.07861.pdf}{Nesterov's method with
  decreasing learning rate leads to accelerated stochastic gradient descent}}.
\newblock \emph{arXiv preprints arXiv:1908.07861}, 2019.

\bibitem[LeCun et~al.(2010)LeCun, Cortes, and Burges]{lecun2010mnist}
Y.~LeCun, C.~Cortes, and C.~Burges.
\newblock "mnist" handwritten digit database, 2010.
\newblock URL \url{http://yann. lecun. com/exdb/mnist}.

\bibitem[Li et~al.(2018)Li, Farkhoor, Liu, and Yosinski]{li2018measuring}
C.~Li, H.~Farkhoor, R.~Liu, and J.~Yosinski.
\newblock \href{https://arxiv.org/pdf/1804.08838.pdf}{Measuring the intrinsic
  dimension of objective landscapes}.
\newblock \emph{arXiv preprint arXiv:1804.08838}, 2018.

\bibitem[Li et~al.(2017)Li, Tai, and E]{li2017stochastic}
Q.~Li, C.~Tai, and W.~E.
\newblock \href{http://proceedings.mlr.press/v70/li17f/li17f.pdf}{Stochastic
  Modified Equations and Adaptive Stochastic Gradient Algorithms}.
\newblock In \emph{Proceedings of the 34th International Conference on Machine
  Learning (ICLR)}, volume~70, pages 2101--2110, 2017.

\bibitem[Li et~al.(2019)Li, Tai, and E]{li2019stochastic}
Q.~Li, C.~Tai, and W.~E.
\newblock \href{http://jmlr.org/papers/v20/17-526.html}{Stochastic Modified
  Equations and Dynamics of Stochastic Gradient Algorithms I: Mathematical
  Foundations}.
\newblock \emph{Journal of Machine Learning Research}, 20\penalty0
  (40):\penalty0 1--47, 2019.

\bibitem[{Liao} et~al.(2020){Liao}, {Couillet}, and {Mahoney}]{liao2020Random}
Z.~{Liao}, R.~{Couillet}, and M.~{Mahoney}.
\newblock \href{https://arxiv.org/pdf/2006.05013.pdf}{A Random Matrix Analysis
  of Random Fourier Features: Beyond the Gaussian Kernel, a Precise Phase
  Transition, and the Corresponding Double Descent}.
\newblock \emph{arXiv preprint arXiv:2006.05013}, 2020.

\bibitem[{Liu} and {Belkin}(2020)]{Liu2020accelerating}
C.~{Liu} and M.~{Belkin}.
\newblock \href{https://arxiv.org/pdf/1810.13395.pdf}{Accelerating SGD with
  momentum for over-parameterized learning}.
\newblock In \emph{Proceedings of the 37th International Conference on Machine
  Learning (ICML)}, 2020.

\bibitem[{Loizou} and {Richt{\'a}rik}(2017)]{loizou2017momentum}
N.~{Loizou} and P.~{Richt{\'a}rik}.
\newblock {\href{https://arxiv.org/abs/1712.09677}{Momentum and Stochastic
  Momentum for Stochastic Gradient, Newton, Proximal Point and Subspace Descent
  Methods}}.
\newblock \emph{arXiv preprint arXiv:1712.09677}, 2017.

\bibitem[Mandt et~al.(2016)Mandt, Hoffman, and Blei]{mandt2016variational}
S.~Mandt, M.~Hoffman, and D.~Blei.
\newblock \href{https://arxiv.org/pdf/1602.02666.pdf}{A variational analysis of
  stochastic gradient algorithms}.
\newblock In \emph{International conference on machine learning (ICML)}, 2016.

\bibitem[Martin and Mahoney(2018)]{martin2018implicit}
C.H. Martin and M.W. Mahoney.
\newblock \href{https://arxiv.org/pdf/1810.01075.pdf}{Implicit
  self-regularization in deep neural networks: Evidence from random matrix
  theory and implications for learning}.
\newblock \emph{arXiv preprint arXiv:1810.01075}, 2018.

\bibitem[Mei and Montanari(2019)]{mei2019generalization}
S.~Mei and A.~Montanari.
\newblock \href{https://arxiv.org/pdf/1908.05355.pdf}{The generalization error
  of random features regression: Precise asymptotics and double descent curve}.
\newblock \emph{arXiv preprint arXiv:1908.05355}, 2019.

\bibitem[Nesterov(2004)]{nesterov2004introductory}
Y.~Nesterov.
\newblock \emph{\href{http://dx.doi.org/10.1007/978-1-4419-8853-9}{Introductory
  lectures on convex optimization}}.
\newblock Springer, 2004.

\bibitem[{Orvieto} et~al.(2019){Orvieto}, {Kohler}, and
  {Lucchi}]{orvieto2019role}
A.~{Orvieto}, J.~{Kohler}, and A.~{Lucchi}.
\newblock {\href{http://auai.org/uai2019/proceedings/papers/128.pdf}{The Role
  of Memory in Stochastic Optimization}}.
\newblock In \emph{Conference on Uncertainty in Artificial Intelligence (UAI
  2019)}, 2019.

\bibitem[Papyan(2018)]{papyan2018the}
V.~Papyan.
\newblock \href{https://arxiv.org/pdf/1811.07062.pdf}{The full spectrum of
  deepnet hessians at scale: Dynamics with SGD Training and Sample Size}.
\newblock \emph{arXiv preprint arXiv:1811.07062}, 2018.

\bibitem[{Paquette} et~al.(2021){Paquette}, {Lee}, {Pedregosa}, and
  {Paquette}]{paquetteSGD2021}
C.~{Paquette}, K.~{Lee}, F.~{Pedregosa}, and E.~{Paquette}.
\newblock {\href{https://arxiv.org/pdf/2102.04396.pdf}{SGD in the Large:
  Average-case Analysis, Asymptotics, and Stepsize Criticality}}.
\newblock \emph{arXiv preprint arXiv:2102.04396}, 2021.

\bibitem[Pennington and B.(2017)]{pennington2017geometry}
J.~Pennington and Yasaman B.
\newblock
  \href{http://proceedings.mlr.press/v70/pennington17a/pennington17a.pdf}{Geometry
  of Neural Network Loss Surfaces via Random Matrix Theory}.
\newblock In \emph{Proceedings of the 34th International Conference on Machine
  Learning (ICML)}, volume~70 of \emph{Proceedings of Machine Learning
  Research}, pages 2798--2806, 2017.

\bibitem[Polyak(1964)]{Polyak1962Some}
B.T. Polyak.
\newblock \href{https://doi.org/10.1016/0041-5553(64)90137-5}{Some methods of
  speeding up the convergence of iteration methods}.
\newblock \emph{USSR Computational Mathematics and Mathematical Physics}, 04,
  1964.

\bibitem[Sagun et~al.(2016)Sagun, Bottou, and LeCun]{sagun2016eigenvalues}
L.~Sagun, L.~Bottou, and Y.~LeCun.
\newblock \href{https://arxiv.org/abs/1611.07476}{Eigenvalues of the Hessian in
  Deep Learning: Singularity and Beyond}.
\newblock \emph{arXiv preprint arXiv:1611.07476}, 2016.

\bibitem[{Sebbouh} et~al.(2020){Sebbouh}, {Gower}, and
  {Defazio}]{sebbouh2020almost}
O.~{Sebbouh}, R.~{Gower}, and A.~{Defazio}.
\newblock \href{https://arxiv.org/pdf/2006.07867.pdf}{Almost sure convergence
  rates for Stochastic Gradient Descent and Stochastic Heavy Ball}.
\newblock \emph{arXiv preprint arXiv:2006.07867}, 2020.

\bibitem[Sutskever et~al.(2013)Sutskever, Martens, Dahl, and
  Hinton]{sutskever2013on}
I.~Sutskever, J.~Martens, G.~Dahl, and G.~Hinton.
\newblock \href{http://proceedings.mlr.press/v28/sutskever13.pdf}{On the
  importance of initialization and momentum in deep learning}.
\newblock In \emph{Proceedings of the 30th International Conference on Machine
  Learning (ICML)}, volume~28, pages 1139--1147, 2013.

\bibitem[Vaswani et~al.(2019)Vaswani, Bach, and Schmidt]{vaswani2019fast}
S.~Vaswani, F.~Bach, and M.~Schmidt.
\newblock \href{http://proceedings.mlr.press/v89/vaswani19a.html}{Fast and
  Faster Convergence of SGD for Over-Parameterized Models and an Accelerated
  Perceptron}.
\newblock In \emph{Proceedings of the Twenty-Second International Conference on
  Artificial Intelligence and Statistics (ICML)}, volume~89 of
  \emph{Proceedings of Machine Learning Research}, pages 1195--1204. PMLR,
  2019.

\bibitem[Yan et~al.(2018)Yan, Yang, Li, Lin, and Yang]{yan2018unified}
Y.~Yan, T~Yang, Z.~Li, Q.~Lin, and Yi~Yang.
\newblock \href{https://doi.org/10.24963/ijcai.2018/410}{A Unified Analysis of
  Stochastic Momentum Methods for Deep Learning}.
\newblock In \emph{Proceedings of the Twenty-Seventh International Joint
  Conference on Artificial Intelligence, {IJCAI-18}}, pages 2955--2961.
  International Joint Conferences on Artificial Intelligence Organization, 7
  2018.
\newblock \doi{10.24963/ijcai.2018/410}.

\bibitem[{Zhang} et~al.(2019){Zhang}, {Li}, {Nado}, {Martens}, {Sachdeva},
  {Dahl}, {Shallue}, and {Grosse}]{zhang2019which}
G.~{Zhang}, L.~{Li}, Z.~{Nado}, J.~{Martens}, S.~{Sachdeva}, G.~{Dahl},
  C.~{Shallue}, and R.~{Grosse}.
\newblock {\href{https://arxiv.org/abs/1907.04164}{Which Algorithmic Choices
  Matter at Which Batch Sizes? Insights From a Noisy Quadratic Model}}.
\newblock \emph{arXiv preprint arXiv:1907.04164}, 2019.

\end{thebibliography}

\newpage
\appendix
\begin{center}
\LARGE{Dynamics of Stochastic Momentum Methods on Large-scale, Quadratic Models}\\
\vspace{0.5em}\Large{Supplementary material \vspace{0.5em}}

\end{center}
The appendix is organized into five sections as follows:
\begin{enumerate}
    \item Appendix~\ref{sec:hSGD} derives the Volterra equation and proves the main result for the homogenized SGD (Theorem~\ref{thm:hSGD}).
    \item We show in Appendix~\ref{sec:hSGDlsq} a heuristic derivation of the homogenized SGD approximation to the SDA class of algorithms on the least squares problem and we show that SGD and homogenized SGD are close under orthogonal invariance (Theorem~\ref{thm:SGDthm}).
    \item We give in Appendix~\ref{sec:rates} a general overview of the analysis of a convolution Volterra equation of the type that arises in the SDA class.
    \item Appendix~\ref{sec:SDANA} details the analysis of the homogenized SGD for SDANA, including average-case analysis and near optimal parameters.
    \item Appendix~\ref{sec:dahb} has the details showing equivalence of SDAHB with SHB as well as general average-case complexity and parameter selections. 
    \item Appendix~\ref{sec: numerical_simulation} contains details on the simulations.
\end{enumerate}
Unless otherwise stated, all the results hold under Assumptions~\ref{assumption: Vector} and \ref{assumption: spectral_density}. We include all statements from the previous sections for clarity. 

\paragraph{Potential societal impacts.} The results presented in this paper concern the analysis of existing methods and a new method that is a variant of an existing method. The results are theoretical and we do not anticipate any direct ethical and societal issues. We believe the results will be used by machine learning practitioners and we encourage them to use it to build a more just, prosperous world.  

\section{Analysis of the Homogenized SGD evolution}
\label{sec:hSGD}

\subsection{Homogenized SGD}

We recall that the diffusion model is given by
\[
\dif \XX_t = 
-\gamma_2 
\dif \ZZ_t
-\frac{\gamma_1}{\varphi(t)}
\int_0^t \varphi(s) \dif \ZZ_t,
\quad
\text{where}
\quad
\dif \ZZ_t = \nabla f(\XX_t) \dif t
+ \sqrt{\tfrac{2}{n}f(\XX_t)\nabla^2(f)}\dif \BB_t.
\]
To connect these diffusions to SGD on the least squares problem \eqref{eq:lsq}
\[
f(\xx)
= \frac{1}{2}\|\AA \xx - \bb\|^2,
\]
we will use the singular value decomposition of $\UU \SSigma \VV^T$ of $\AA$.  We order the singular values $\sigma_1 \geq \sigma_2 \geq \sigma_3 \cdots$ in decreasing order.  We then let $\nnu_t = \VV^T (\XX_t -\widetilde{\xx})$, where we recall that $\bb = \AA \widetilde{\xx} +\eeta$.  We recall that 
\[
\nabla f( \XX_t)
= \AA^T (\AA \XX_t - \bb)
\quad
\text{and}
\quad
\nabla^2 f = \AA^T \AA.
\]
Hence, we may change the basis to write
\[
  \begin{aligned}
    &\dif (\VV^T \XX_t) = 
    -\gamma_2 
    \dif (\VV^T \ZZ_t)
    -\frac{\gamma_1}{\varphi(t)}
    \int_0^t \varphi(s) \dif (\VV^T \ZZ_t), \\
    &\dif (\VV^T \ZZ_t) = \SSigma^T (\SSigma \nnu_t- \UU^T \bb)\dif t +\sqrt{\tfrac{2}{n}f(\XX_t) \SSigma^T \SSigma} \dif (\VV^T \BB_t).
  \end{aligned}
\]
The loss values we may also represent in terms of $\nnu$
\[
f(\XX_t)
= 
\frac{1}{2}
\| \AA \XX_t - \bb\|^2
=
\frac{1}{2}
\| \SSigma \nnu_t - \UU^T\eeta\|^2
=
\frac{1}{2}\sum_{j=1}^d
( \sigma_j \nu_{t,j} - (\UU^T \bb)_j)^2
.
\]
We let $\dif \WW_t = \sqrt{\tfrac{2}{n}f(\XX_t) \SSigma^T \SSigma} \dif (\VV^T \BB_t),$ so that $\{W_{t,j} : t \geq 0, j \in 1,2,\dots, n\}$ are a family of continuous martingales with quadratic variation
\begin{equation}\label{eqF:W}
\dif \langle W_{t,j},W_{t,i}\rangle
= \delta_{i,j} \frac{2\sigma_j^2}{n} f(\XX_t) \dif t
\end{equation}
for all $i$ and $j$, with $1 \leq i,j \leq n$.
Finally, we conclude that
\begin{equation} \begin{aligned} \label{eq:dnu}
    \dif \nu_{t,j} = -\gamma_2 \dif \xi_{t,j} - \frac{\gamma_1}{\varphi(t)}\int_0^t \varphi(s) \dif \xi_{s,j}
    \quad\text{where}
    \quad
    \dif \xi_{t,j}
    \coloneqq
    \dif W_{t,j} + \sigma_j^2 \bigl(\nu_{t,j}-\tfrac{(\UU^T\eeta)_j}{\sigma_j}\bigr)\dif t.
\end{aligned}
\end{equation}
As in \eqref{eqF:W}, the quadratic variation of $W_{t,j}$ and $\xi_{t,j}$ is 
\begin{equation}
    \dif \, \langle \xi_{t,j} \rangle 
    =
    \gamma_2^{-2}\dif \, \langle \nu_{t,j} \rangle 
    =
    \dif \, \langle W_{t,j} \rangle 
    = \frac{2 \sigma_j^2 f(\XX_t)}{n}\dif t.
\end{equation}

\subsection{Mean behavior of the homogenized SGD}
We derive a description for the mean of the loss values $\Exp_{\HH} f(\XX_t)$.
We define the following functions of time
\begin{equation}\label{eqE:JL} \begin{gathered}
J \defas \Exp_{\HH}\biggl[ \bigl(\nu_{t,j}-\tfrac{(\UU^T\eeta)_j}{\sigma_j}\bigr)^2\biggr]
\quad\text{and}\quad
N \defas \sigma_j^{-2}\Exp_{\HH} \biggl[ 
\biggl(\int_0^t \varphi(s) d\xi_{s,j}\biggr)^2
-
\int_0^t \varphi^2(s)\dif \langle \xi_{s,j}\rangle
\biggr].
\end{gathered}
\end{equation}
We will compute the derivatives of these expressions in time.
Using It\^o's rule, 
\begin{equation} \begin{aligned} \label{eqE:dnu_squared}
    \dif \, \bigl(\nu_{t,j}-\tfrac{(\UU^T\eeta)_j}{\sigma_j}\bigr)^2 
    &= 2 \bigl(\nu_{t,j}-\tfrac{(\UU^T\eeta)_j}{\sigma_j}\bigr) \, \dif \nu_{t,j} + \dif \langle \nu_{t,j} \rangle \\
    &= 2 \bigl(\nu_{t,j}-\tfrac{(\UU^T\eeta)_j}{\sigma_j}\bigr) \biggl(-\gamma_2 \dif \xi_{t,j} - \frac{\gamma_1}{\varphi(t)}\int_0^t \varphi(s) \dif \xi_{s,j} \biggr)
    + \gamma_2^2\dif \langle \xi_{t,j} \rangle.
\end{aligned} \end{equation}
Since $\dif W_{t,j}$ is a martingale increment, the expectation of the $\dif \xi_{t,j}$ term simplifies.  We may do a similar computation with $N$ and conclude that:
\[
\begin{aligned}
&J^{(1)} 
=-2\gamma_2 \sigma_j^2 J
-\frac{2\gamma_1}{\varphi(t)}
\Exp_{\HH} \biggl[
\bigl(\nu_{t,j}-\tfrac{(\UU^T\eeta)_j}{\sigma_j}\bigr)
\int_0^t \varphi(s) \dif \xi_{s,j}
\biggr]+ \gamma_2^2
\Exp_{\HH}\dif \langle \xi_{t,j}\rangle,
\\
&N^{(1)}
=\sigma_j^{-2}\Exp_{\HH}\biggl[
2\varphi(t)\dif \xi_{t,j} \int_0^t \varphi(s) \dif \xi_{s,j}
\biggr]
=\Exp_{\HH}\biggl[
2\varphi(t)\bigl(\nu_{t,j}-\tfrac{(\UU^T\eeta)_j}{\sigma_j}\bigr) \int_0^t \varphi(s) \dif \xi_{s,j}
\biggr]
\end{aligned}
\]
In summary, we may express $J$ in terms of $N$ by
\begin{equation} \begin{gathered} \label{eqE:J}
J^{(1)} = -2 \gamma_2 \sigma_j^2 J - \frac{\gamma_1}{\varphi^2(t)} N^{(1)}
+ \gamma_2^2\dif \langle \xi_{t,j}\rangle
\quad
\text{with} \quad J(0) = \EE_{\HH}\biggl[ \bigl(\nu_{0,j}-\tfrac{(\UU^T\eeta)_j}{\sigma_j}\bigr)^2\biggr].
\end{gathered}
\end{equation}
Now we write a differential equation for $N$, using the product rule for stochastic calculus, and conclude
\[
\begin{aligned}
\varphi\tfrac{\dif}{\dif t}{\bigl(N^{(1)}/\varphi\bigr)}
=
&-
2\varphi(t)
\Exp_{\HH}\biggl[
\biggl( \gamma_2 \dif \xi_{t,j} + \frac{\gamma_1}{\varphi(t)}\int_0^t \varphi(s) \dif \xi_{s,j} \biggr) \int_0^t \varphi(s) \dif \xi_{s,j}
\biggr] \\
&+
2\varphi(t)
\Exp_{\HH}\biggl[
\biggl( \nu_{t,j}-\tfrac{(\UU^T\eeta)_j}{\sigma_j} \biggr) \varphi(t) \dif \xi_{t,j}
\biggr] 
+
2\varphi^2(t) 
\Exp_{\HH}\langle \dif \nu_{t,j}, \dif \xi_{t,j}\rangle \\
&=
-\gamma_2\sigma_j^2 N^{(1)}
-2\gamma_1 \bigl( \sigma_j^2  N+\int_0^t \varphi^2 \dif \langle \xi_{t,j}\rangle\bigr)
+2\varphi^2(t) \sigma_j^2 J - 2\gamma_2\varphi^2(t) 
\Exp_{\HH}\dif \langle \xi_{t,j}\rangle,
\end{aligned}
\]
with initial conditions $N(0)=N^{(1)}(0)=0$.
We will use 
\begin{equation}\label{eqE:hat}
\widehat{J} = \varphi^2 J / \gamma_1
\quad\text{and}\quad\widehat{\psi} = 2 \sigma^2_j \varphi^2 \Exp_{\HH} f(\XX_t)/n = \varphi^2 \Exp_{\HH} \dif \langle \xi_{t,j}\rangle.
\end{equation}
From these definitions we can also record, by evaluating the previous displayed equation at $0$ that $N^{(2)}(0)=2\gamma_1\sigma_j^2 \widehat{J}(0) - 2\gamma_2\widehat{\psi}(0)$.
The $\widehat{J}$ can be expressed as
\[
\widehat{J} \bigl( -2 \Phi + 2\gamma_2 \sigma_j^2\bigr) + \widehat{J}^{(1)} = -N^{(1)}
+ \tfrac{\gamma_2^2}{\gamma_1} \widehat{\psi} \qquad \text{where \qquad $\Phi \defas \frac{\varphi'(t)}{\varphi(t)}$.}
\]
We then differentiate the display equation  above to produce
\[
N^{(3)}
+N^{(2)}\bigl(-\Phi + \gamma_2 \sigma_j^2\bigr)
+N^{(1)}\bigl(-\Phi' + 2\gamma_1 \sigma_j^2\bigr)
-2\gamma_1 \sigma_j^2 \widehat{J}^{(1)}
=
-2\gamma_1 \widehat{\psi}
-2\gamma_2 \widehat{\psi}^{(1)}.
\]
On substituting $\widehat{J}$, we arrive at the third-order differential equation
\begin{equation}\label{eqE:hatJ_genera}
    \begin{aligned}
    &\widehat{J}^{(3)} + \left (  -3\Phi + 3 \gamma_2 \sigma_j^2 \right ) \widehat{J}^{(2)} + \left ( -5\Phi^{(1)} + 2 \Phi^2 -4 \gamma_2 \sigma_j^2\Phi +4\gamma_1\sigma_j^2 + 2 \gamma_2^2 \sigma_j^4 \right ) \widehat{J}^{(1)} \\
    &+ \left ( -2\Phi^{(2)}+4\Phi \Phi^{(1)} - 4\gamma_2\sigma_j^2 \Phi^{(1)} - 4 \gamma_1 \sigma_j^2 \Phi + 4 \gamma_1 \gamma_2 \sigma_j^4 \right ) \widehat{J}\\
    &=
    \tfrac{\gamma_2^2}{\gamma_1}
    \widehat{\psi}^{(2)}
    +\bigl(
    2\gamma_2
    + 
    \bigl(-\Phi + \gamma_2\sigma_j^2\bigr)\tfrac{\gamma_2^2}{\gamma_1}
    \bigr)
    \widehat{\psi}^{(1)}
    +\bigl(
    2\gamma_1
    + 
    \bigl( -\Phi^{(1)} + 2\gamma_1\sigma_j^2\bigr)\tfrac{\gamma_2^2}{\gamma_1}
    \bigr)
    \widehat{\psi}.
    \end{aligned}
\end{equation}
The initial conditions are given by
\begin{equation}\label{eqE:JIC_general}
\begin{gathered}
\widehat{J}(0) = \gamma_1^{-1}\EE\biggl[ \bigl(\nu_{0,j}-\tfrac{(\UU^T\eeta)_j}{\sigma_j}\bigr)^2\biggr],
\quad
 \widehat{J}^{(1)}(0) =
 \tfrac{\gamma_2^2}{\gamma_1} \widehat{\psi}(0)
-\widehat{J}(0) \bigl( -2\Phi(0) + 2\gamma_2 \sigma_j^2\bigr),
\quad\text{and} \\
\widehat{J}^{(2)}(0)
 =  \tfrac{\gamma_2^2}{\gamma_1} \widehat{\psi}^{(1)}(0)+ 2\gamma_2 \widehat{\psi}(0) - 2 \gamma_1 \sigma_j^2 \widehat{J}(0) + ( 2\Phi(0) - 2 \gamma_2 \sigma_j^2) \widehat{J}^{(1)}(0) + 2 
 \Phi^{(1)}(0) \widehat{J}(0).
\end{gathered}
\end{equation}

\paragraph{Two special cases for $\Delta(k,n)$.} In this section, we record the ODE for two special cases of the function $\varphi(t)$. When $\Delta(k,n) = \frac{\theta}{k+n}$ and thus $\varphi(t) = (1+t)^{\theta}$ with $\Phi(t) = \frac{\theta}{1+t}$, the corresponding ODE is precisely  
\begin{equation}
    \begin{aligned} \label{eq:DE_SDANA}
    &\widehat{J}^{(3)} - \left ( \tfrac{3 \theta}{(1+t)} - 3 \gamma_2 \sigma_j^2 \right ) \widehat{J}^{(2)} - \left ( -\tfrac{5 \theta + 2 \theta^2}{(1+t)^2} + \tfrac{4 \gamma_2 \sigma_j^2 \theta}{ (1+t)} -4\gamma_1\sigma_j^2 - 2 \gamma_2^2 \sigma_j^4 \right ) \widehat{J}^{(1)} \\
    &- \left ( \tfrac{4 \theta + 4 \theta^2}{ (1+t)^3} - \tfrac{4 \gamma_2 \sigma_j^2 \theta }{ (1+t)^2} + \tfrac{4 \gamma_1 \sigma_j^2 \theta}{ (1+t) } - 4 \gamma_1 \gamma_2 \sigma_j^4 \right ) \widehat{J}\\
    &=
    \tfrac{\gamma_2^2}{\gamma_1}
    \widehat{\psi}^{(2)}
    +\bigl(
    2\gamma_2
    + 
    \bigl(\tfrac{-\theta}{(1+t)} + \gamma_2\sigma_j^2\bigr)\tfrac{\gamma_2^2}{\gamma_1}
    \bigr)
    \widehat{\psi}^{(1)}
    +\bigl(
    2\gamma_1
    + 
    \bigl(\tfrac{\theta}{(1+t)^2} + 2\gamma_1\sigma_j^2\bigr)\tfrac{\gamma_2^2}{\gamma_1}
    \bigr)
    \widehat{\psi}.
    \end{aligned}
\end{equation}
and the initial conditions are given by
\begin{equation}
\begin{gathered}
\widehat{J}(0) = \gamma_1^{-1}\EE\biggl[ \bigl(\nu_{0,j}-\tfrac{(\UU^T\eeta)_j}{\sigma_j}\bigr)^2\biggr],
\quad
 \widehat{J}^{(1)}(0) =
 \tfrac{\gamma_2^2}{\gamma_1} \widehat{\psi}(0)
-\widehat{J}(0) \bigl( -2\theta + 2\gamma_2 \sigma_j^2\bigr),
\quad\text{and} \\
\widehat{J}^{(2)}(0)
 =  \tfrac{\gamma_2^2}{\gamma_1} \widehat{\psi}^{(1)}(0)+ 2\gamma_2 \widehat{\psi}(0) - 2 \gamma_1 \sigma_j^2 \widehat{J}(0) + ( 2 \theta - 2 \gamma_2 \sigma_j^2) \widehat{J}^{(1)}(0) - 2\theta \widehat{J}(0).
\end{gathered}
\end{equation}
The other case is when $\Delta(k,n) = \frac{\theta}{n}$, or $\varphi(t) = \exp(\theta t)$. We call this the general SDAHB; one recovers SDAHB when $\gamma_1 = \gamma, \gamma_2 = 0, \, \text{and} \, \theta = \theta$. In this setting, the log-derivative $\Phi(t) = \alpha$ and the ODE reduces to 
\begin{equation}
    \begin{aligned} \label{eq:JEQ_SHB}
    &\widehat{J}^{(3)} + \left (  -3\theta + 3 \gamma_2 \sigma_j^2 \right ) \widehat{J}^{(2)} + \left ( 2 \theta^2 -4 \gamma_2 \sigma_j^2\theta +4\gamma_1\sigma_j^2 + 2 \gamma_2^2 \sigma_j^4 \right ) \widehat{J}^{(1)} \\
    &+ \left ( - 4 \gamma_1 \sigma_j^2 \theta + 4 \gamma_1 \gamma_2 \sigma_j^4 \right ) \widehat{J}\\
    &=
    \tfrac{\gamma_2^2}{\gamma_1}
    \widehat{\psi}^{(2)}
    +\bigl(
    2\gamma_2
    + 
    \bigl(-\theta + \gamma_2\sigma_j^2\bigr)\tfrac{\gamma_2^2}{\gamma_1}
    \bigr)
    \widehat{\psi}^{(1)}
    +\bigl(
    2\gamma_1
    +  2\gamma_1\sigma_j^2 \tfrac{\gamma_2^2}{\gamma_1}
    \bigr)
    \widehat{\psi}.
    \end{aligned}
\end{equation}
The initial conditions are given by
\begin{equation}
\begin{gathered}
\widehat{J}(0) = \gamma_1^{-1}\EE\biggl[ \bigl(\nu_{0,j}-\tfrac{(\UU^T\eeta)_j}{\sigma_j}\bigr)^2\biggr],
\quad
 \widehat{J}^{(1)}(0) =
 \tfrac{\gamma_2^2}{\gamma_1} \widehat{\psi}(0)
-\widehat{J}(0) \bigl( -2\theta + 2\gamma_2 \sigma_j^2\bigr),
\quad\text{and} \\
\widehat{J}^{(2)}(0)
 =  \tfrac{\gamma_2^2}{\gamma_1} \widehat{\psi}^{(1)}(0)+ 2\gamma_2 \widehat{\psi}(0) - 2 \gamma_1 \sigma_j^2 \widehat{J}(0) + ( 2\theta - 2 \gamma_2 \sigma_j^2) \widehat{J}^{(1)}(0).
\end{gathered}
\end{equation}
We note that the ODE in \eqref{eq:JEQ_SHB} is constant coefficient and therefore can be solved by finding the characteristic polynomial, that is,
\begin{align*}
    0 &= \lambda^3 + (3 \gamma_2 \sigma^2_j - 3 \theta) \lambda^2 + (2\theta^2 -4 \gamma_2 \sigma_j^2 \theta + 4 \gamma_1 \sigma_j^2 + 2 \gamma_2^2 \sigma_j^4) \lambda + 4\gamma_1 \gamma_2 \sigma_j^4 - 4\gamma_1 \sigma_j^2 \theta, \\
    0 &= (\lambda + \sigma_j^2 \gamma_2 - \theta) (\lambda^2 + (2\sigma_j^2 \gamma_2 - 2 \theta) \lambda + 4 \sigma_j^2 \gamma_1 ), \\
    \lambda &= \theta - \sigma_j^2 \gamma_2 \quad \text{and} \quad \lambda = -(\sigma_j^2 \gamma_2 - \theta) \pm \sqrt{(\sigma_j^2 \gamma_2-\theta)^2-4 \sigma_j^2 \gamma_1}.
\end{align*}

\subsection{Inhomogeneous IVP in \eqref{eqE:hatJ_genera}} We simplify the problem in \eqref{eqE:hatJ_genera} by considering the inhomogeneous ODE 
\begin{equation} \begin{gathered} \label{eq:inhomogeneous_ODE}
    L[\widehat{J}]\coloneqq \widehat{J}^{(3)} + p(t) \widehat{J}^{(2)} + q(t) \widehat{J}^{(1)}+ r(t) \widehat{J} = C \widehat{\psi}^{(2)} + f(t) \widehat{\psi}^{(1)} + g(t) \widehat{\psi}
    \eqqcolon R[\widehat{\psi}],
\end{gathered} \end{equation}
where $L[\widehat{J}]$ and $R[\widehat{\psi}]$ are differential operators.  Let $J_0(t)$ be the solution to the homogeneous ODE in \eqref{eq:inhomogeneous_ODE} (\textit{i.e.} $L[J_0] = 0$) with initial conditions given by $J_0(0) = d_0$, $J_0^{(1)}(0) = d_1$ and $J_0^{(2)}(0) = d_2$. We let $\widehat{K}_s(t)$ solve
\begin{equation} \begin{gathered}
\text{$\widehat{K}_s(t) = 0$ for $t < s$}, \,
    \text{ $L[\widehat{K}_s(t)] = 0$ for $t \neq s$}, \, 
    \text{and $\widehat{K}_s(s) = c_0$,\, $\widehat{K}_s^{(1)}(s)=c_1$, \, $\widehat{K}_s^{(2)}(s) = c_2$.}
\end{gathered} \end{equation}
Here the initial conditions are chosen so that $L[\widehat{K}_s(t)] = R^*[\delta_s(t)],$ with $R^*$ the adjoint differential operator, \textit{i.e.},
\begin{align*}
    L \big [ \int_0^\infty \widehat{K}_s(t) \widehat{\psi}(s) \, \dif s \big ](t) = \int_0^\infty L[\widehat{K}_s(t)] \widehat{\psi}(s) \, \dif s =  \int_0^\infty R^*[\delta_s(t)] \widehat{\psi}(s) \, \dif s = R[\widehat{\psi}](t).
\end{align*}
We now just need to determine the initial conditions $c_0$, $c_1$, and $c_2$. First, we define $H_s(t)$ to be the Heaviside function with a jump at $s$ and note the following classical results for derivatives of $H_s(t)$:
\begin{align*}
    \partial_t \big ( \tfrac{(t-s)^2}{2} H_s(t) \big ) = (t-s) H_s(t), \quad &\partial_t ( (t-s) H_s(t) ) = H_s(t)\\
    \partial_t H_s(t) = \delta_s(t), \quad &\partial^2_t H_s(t) = \delta_s'(t).
\end{align*}
We now define the following operator where the derivatives are taken with respect to $t$
\begin{align*} R^*[\delta_s](t) &\defas C \delta_s''(t) + f(t) \delta_s'(t) + g(t) \delta_s(t)\\
\int_0^\infty R^*[\delta_s(t)] \widehat{\psi}(s) \, \dif s &= \frac{d}{dt^2} \int_0^\infty C \delta_s(t) \widehat{\psi}(s) \dif s + f(t) \frac{d}{dt} \int_0^\infty \delta_s(t) \widehat{\psi}(s) \dif s + g(t) \int_0^\infty \delta_s(t) \widehat{\psi}(s) \dif s\\
&=C \widehat{\psi}^{(2)}(t) + f(t) \widehat{\psi}^{(1)}(t) + g(t) \widehat{\psi}(t) = F(t).
\end{align*}
We now decompose $\widehat{K}_s(t) = \widehat{K}_s^1(t) + \widehat{K}_s^2(t) + \widehat{K}_s^3(t)$ and find initial conditions for each of these terms separately, that is, we will find
\begin{align*}
    L[\widehat{K}_s^1(t)] = C \delta_s''(t), \quad L[\widehat{K}_s^2(t)] = f(t) \delta_s'(t), \quad \text{and} \quad L[\widehat{K}_s^3(t)] = g(t) \delta_s(t).
\end{align*}
We recall for clarity that $f(t) \delta_s'(t) = f(s) \delta_s'(t) - f'(s) \delta_s(t)$. We can write $\widehat{K}_s^i = \widetilde{K}_s^i + \widetilde{H}_s^i$ where $\widetilde{K}_s^i$ is $0$ at $s$ and $C^2$ and
\[\widetilde{H}_s^i \defas H_s^i(t) \big (c_0^i + c_1^i(t-s) + c_2^i \tfrac{(t-s)^2}{2} \big ).\]
It follows that $L[\widetilde{H}_s^1] = C \delta_s''(t) + \{\text{continuous functions on $t \ge s$}\}$. To find $c_0^1, c_1^1$, and $c_2^1$, we see that
\begin{align*}
    L[\widetilde{H}_s^1] &= c_0^1 \delta_s''(t) + c_1^1 \delta_s'(t) + c_2^1 \delta_s(t) + p(t) [c_0^1 \delta_s'(t) + c_1^1 \delta_s(t) + c_2^1 H_s(t) ]\\
    &\qquad + q(t) \big [ c_0^1 \delta_s(t) + c_1^1 H_s(t) + c_2^1 \int H_s(t) \dif t \big ]\\
    &\qquad + r(t) \big [ c_0^1 H_s(t) + c_1^1 \int H_s(t) \dif t + c_2^1 \int \int H_s(t) \dif t \dif t \big ]\\
    &= c_0^1 \delta_s''(t) + c_1^1 \delta_s'(t) + c_2^1 \delta_s(t) + c_0^1 \big ( p(s) \delta_s'(t) - p'(s) \delta_s(t) \big ) + c_1^1 p(s) \delta_s(t)\\
    & \qquad + c_0^1 q(s) \delta_s(t) + \text{continuous terms}   
\end{align*}
As we want $L[\widetilde{H}_s^1] = C \delta_s''(t)$, then we need to solve the system
\begin{align*}
    \begin{pmatrix} C \\ 0 \\ 0 \end{pmatrix} = \begin{pmatrix} 
    1 & 0 & 0\\
    p(s) & 1 & 0\\
    q(s) - p'(s) & p(s) & 1
    \end{pmatrix} \begin{pmatrix}
    c_0^1\\
    c_1^1\\
    c_2^1
    \end{pmatrix}.
\end{align*}
We can know solve this system to get that
\begin{equation} \label{eq:IC_1}
c_0^1 = C, \quad c_1^1 = -C p(s), \quad \text{and} \quad c_2^1 = Cp^2(s) - C(q(s) - p'(s)).  
\end{equation}
Next we want to solve $L[\widetilde{H}_s^2] = f(t) \delta_s'(t) = f(s) \delta_s'(t) - f'(s) \delta_s(t)$. Using a similar argument as before, we deduce that
\begin{align*}
    \begin{pmatrix} 0 \\ f(s) \\ -f'(s) \end{pmatrix} = \begin{pmatrix} 
    1 & 0 & 0\\
    p(s) & 1 & 0\\
    q(s) - p'(s) & p(s) & 1
    \end{pmatrix} \begin{pmatrix}
    c_0^2\\
    c_1^2\\
    c_2^2
    \end{pmatrix}.
\end{align*}
Solving this system,
\begin{equation}\label{eq:IC_2}
    c_0^2 = 0, \quad c_1^2 = f(s), \quad \text{and} \quad c_2^2 = -f'(s) - p(s) f(s).
\end{equation}
Lastly we want to solve $L[\widetilde{H}_s^3] = g(s) \delta_s(t)$ or equivalently,
\begin{align*}
    \begin{pmatrix} 0 \\ 0 \\ g(s) \end{pmatrix} = \begin{pmatrix} 
    1 & 0 & 0\\
    p(s) & 1 & 0\\
    q(s) - p'(s) & p(s) & 1
    \end{pmatrix} \begin{pmatrix}
    c_0^3\\
    c_1^3\\
    c_2^3
    \end{pmatrix},
\end{align*}
that is 
\begin{equation}
    c_0^3 = 0, \quad c_1^3 = 0, \quad \text{and} \quad c_2^3 = g(s).
\end{equation}
Putting this all together, we need to solve for $\widehat{K}_s(t)$ such that
\begin{equation} \begin{gathered} \label{eq:general_kernal}
 L[\widehat{K}_s(t)] = 0\\
\text{where} \quad \widehat{K}_s(s) = C, \quad \widehat{K}_s'(s) = f(s)-Cp(s),\\
\text{and} \quad \widehat{K}_s''(s) = Cp^2(s) - C(q(s)-p'(s)) -f'(s)-p(s) f(s) + g(s) 
\end{gathered} 
\end{equation}

\begin{proposition}[Kernel representation, general] \label{prop:kernel_general} Consider the inhomogeneous ODE in \eqref{eqE:hatJ_genera}. Let $\widehat{K}_s(t)$ and $\widehat{J}_0(t)$ solve the homogeneous ODE in \eqref{eqE:hatJ_genera}, that is, 
\begin{enumerate}
    \item $L[\widehat{K}_s(t)] = 0 \quad \text{\rm where} \quad \widehat{K}_s(s) = \frac{\gamma_2^2}{\gamma_1}, \quad \widehat{K}_s'(s) = 2 \gamma_2 + \frac{2\gamma_2^2}{\gamma_1}\left (\Phi(s)-\gamma_2 \sigma_j^2 \right ), \text{\rm and}\\ \quad \widehat{K}_s''(s) = 2(\gamma_1 + 3 \gamma_2 \Phi(s) - 4\gamma_2^2 \sigma_j^2) + \frac{2 \gamma_2^2}{\gamma_1} \big [ \Phi^{(1)}(s) + 2\Phi^2(s) - 4 \gamma_2 \sigma_j^2 \Phi(s) + 2 \gamma_2^2 \sigma_j^4 \big ] $.
    \item $L[\widehat{J}_0(t)] = 0 \, \, \text{\rm where} \, \, \widehat{J}_0(0) = \frac{1}{\gamma_1} \mathbb{E}\big [ \big (\nu_{0,j}- \frac{(\UU^T\eeta)_j}{\sigma_j} \big )^2 \big ], \, \, \widehat{J}^{(1)}_0(0) = (2 \Phi(0)-2\gamma_2 \sigma_j^2) \widehat{J}_0(0)\\
    \text{\rm and} \quad \widehat{J}_0^{(2)}(0) = \big ( (2\Phi(0) - 2 \gamma_2 \sigma_j^2)^2 -2\gamma_1\sigma_j^2 + 2\Phi^{(1)}(0)  \big ) \widehat{J}_0(0) 
    $.
\end{enumerate}
Then the solution to the inhomogeneous ODE in \eqref{eqE:hatJ_genera} is given by 
\[ \widehat{J}(t) = \widehat{J}_0(t) + \int_0^t \widehat{K}_s(t) \widehat{\psi}(s) \dif s. \]
\end{proposition}
\begin{proof} This is a direct application of \eqref{eq:general_kernal} with coefficients defined by \eqref{eq:inhomogeneous_ODE} to the ODE in \eqref{eqE:hatJ_genera}.
\end{proof}

This leads immediately to a general representation of the kernel and forcing terms for homogenized SGD, which we summarize in the following theorem.
\begin{theorem}\label{thm:bighSGD}
The homogenized SGD diffusion loss values satisfy 
\[
\Exp_{\HH}[f(\XX_t)] = F(t) + \int_0^t \mathcal{K}_s(t) \Exp_{\HH}[f(\XX_s)] \dif s
\quad\text{for all}
\quad t \geq 0.
\]
The forcing function $F$ and the kernel $\mathcal{K}$ are given by
\[
F(t) = \frac{1}{n}\sum_{i=1}^n (R\lambda_i + \widetilde{R}) G^{(\lambda_i)}(t)
\quad\text{and}\quad
\mathcal{K}_s(t) 
=
\frac{1}{n}\sum_{i=1}^{n} K^{(\lambda_i)}_s(t).
\]
The function $G^{(\lambda)}(t)$ and $K^{(\lambda)}_s(t)$ are solutions of a differential equation, where if $\lambda=0$ then $G^{(\lambda)}(t)=1$ and $K^{(\lambda)}_s(t)=0$.  Define the differential operator
\[
\begin{aligned}
L^{(\lambda)}[ \widehat{J}] = 
    &\widehat{J}^{(3)} + \left (  -3\Phi + 3 \gamma_2 \lambda \right ) \widehat{J}^{(2)} + \left ( -5\Phi^{(1)} + 2 \Phi^2 -4 \gamma_2 \lambda\Phi +4\gamma_1\lambda + 2 \gamma_2^2 \lambda^2 \right ) \widehat{J}^{(1)} \\
    &+ \left ( -2\Phi^{(2)}+4\Phi \Phi^{(1)} - 4\gamma_2\lambda \Phi^{(1)} - 4 \gamma_1 \lambda \Phi + 4 \gamma_1 \gamma_2 \lambda^2 \right ) \widehat{J}.
    \end{aligned}
\]
Then the interaction kernel is given by
\[\begin{gathered}
K_s^{(\lambda)}(t) = \tfrac{\lambda^2\varphi^2(s) \widehat{K}_s(t)}{\varphi^2(t)}
\quad\text{where}
\quad
L^{(\lambda)}[ \widehat{K}_s](t) = 0,\,\, t \geq s,\,\,\widehat{K}_s(t) =0,\,\,t < s,
\quad\text{and}\quad \\
\quad \widehat{K}_s(s) = \gamma_2^2, \quad \widehat{K}_s'(s) = 2 \gamma_2\gamma_1 + {2\gamma_2^2}\left (\Phi(s)-\gamma_2 \lambda \right ),\quad \text{and}\\ \quad \widehat{K}_s''(s) = 2\gamma_1(\gamma_1 + 3 \gamma_2 \Phi(s) - 4\gamma_2^2 \lambda) 
+{2 \gamma_2^2} \big [ \Phi^{(1)}(s) + 2\Phi^2(s) - 4 \gamma_2 \lambda \Phi(s) + 2 \gamma_2^2 \lambda^2 \big ].
\end{gathered}
\]
The forcing kernel is given by
\[
\begin{gathered}
G^{(\lambda)}(t) = \tfrac{\widehat{J}_0(t)}{2\varphi^2(t)}
\quad\text{where}
\quad
L^{(\lambda)}[ \widehat{J}_0] = 0,\quad t \geq 0, 
\quad\text{and}\quad \\
\widehat{J}_0(0) = 1, \, \, \widehat{J}^{(1)}_0(0) = (2 \Phi(0)-2\gamma_2 \lambda),\,\,\text{and}\,\,\, \widehat{J}_0^{(2)}(0) = \big ( (2\Phi(0) - 2 \gamma_2 \lambda)^2 -2\gamma_1\lambda + 2\Phi^{(1)}(0)  \big ).
\end{gathered}
\]
\end{theorem}
\begin{proof}
Using the results derived so far, we now formulate the autonomous Volterra equation for the loss under homogenized SGD $\Exp f(\XX_t)$.
We recall that for the least squares problem we have taking expectation (conditioning on the singular values $\SSigma$)
\[
\Exp_{\HH}[f(\XX_t)]
=
\frac{1}{2}
\Exp_{\HH}\| \SSigma \nnu_t - (\UU^t\eeta)\|^2
=
\frac{1}{2}
\sum_{j=1}^{n}
\sigma_j^2
\Exp_{\HH}  \bigl(\nu_{t,j} - \tfrac{(\UU^t\eeta)_j}{\sigma_j}\bigr)^2
+
\frac{1}{2}
\sum_{j = d}^n
\Exp_{\HH}
(\UU^T \eeta)^2_j,
\]
where the second sum is empty when $n < d$.  Recall that $J=J^{(\sigma_j^2)}$ \eqref{eqE:JL} gives the expectation of $\Exp_{\HH}  \bigl(\nu_{t,j} - \tfrac{(\UU^t\eeta)_j}{\sigma_j}\bigr)^2$ and hence
\[
\Exp_{\HH} f(\XX_t)
=
\frac{1}{2}
\sum_{j=1}^n
\sigma_j^2
J^{(\sigma_j^2)}(t)
+ 
\frac{\widetilde{R}\min\{ n-d, 0\}}{2n}.
\]
Using \eqref{eqE:hat}
\begin{equation*}
\widehat{J} = \varphi^2 J / \gamma_1
\quad\text{and}\quad\widehat{\psi} = 2 \sigma^2_j \varphi^2 \Exp f(\XX_t)/n.
\end{equation*}
The term $n\sigma_j^2\widehat{J}$ has as initial conditions $\sigma_j^2 R + \widetilde{R}$ (when $\sigma_j^2 = 0,$ the process $\widehat{J}$ is constant).
We conclude that
\[
\Exp_{\HH} f(\XX_t)
=
\frac{1}{n}
\sum_{j=1}^n
\frac{\gamma_1 n\sigma_j^2\widehat{J}^{(\sigma_j^2)}}{2\varphi^2(t)}.
\]
From Proposition \ref{prop:kernel_general}, 
\[
\Exp_{\HH} f(\XX_t)
=
\frac{1}{n}
\sum_{j=1}^n
\frac{\gamma_1 n\sigma_j^2\widehat{J}_0^{(\sigma_j^2)}(t)}{2\varphi^2(t)}
+
\int_0^t
\frac{1}{n}
\sum_{j=1}^n
\frac{\gamma_1\sigma_j^4\varphi^2(s)\widehat{K}_s^{(\sigma_j^2)}(t)\cdot \Exp_{\HH} f(\XX_s)}{\varphi^2(t)}
\dif s.
\]
After defining $G^{(\lambda)}$ and $K^{(\lambda)}$ as in the statement of the Theorem, this completes the proof.


\end{proof}

We now give an explicit expressions for the kernel in two specific cases. 

\begin{corollary}[Kernel representation, SDANA] \label{cor:kernel} Consider the inhomogeneous ODE in \eqref{eq:DE_SDANA}. Define the differential operators with $\lambda \defas \sigma_j^2$
\begin{enumerate}
\item $
    L[\widehat{K}_s](t) = 0, \, \, t \ge s, \, \, \widehat{K}_s(t) = 0, \, \,  t < s \\
    \text{\rm where} \quad \widehat{K}_s(s) = \gamma_2^2, \quad \widehat{K}_s'(s) = 2\gamma_2 \gamma_1 + 2\gamma_2^2 \left ( \frac{\theta}{1+s} - \gamma_2 \lambda \right ), 
      \text{\rm and}\\ \quad \widehat{K}_s''(s) = \gamma_2^2 \left [ \frac{4\theta^2-2\theta}{(1+s)^2} - \frac{8\theta \gamma_2 \lambda}{1+s} + 4 \lambda^2 \gamma_2^2  \right ] +\gamma_1 (2 \gamma_1- 8 \gamma_2^2 \lambda+ \frac{6 \theta \gamma_2}{1+s} ).\\
$
\item $L[\widehat{J}_0(t)] = 0 \\
  \textrm{\rm where} \, \, \widehat{J}_0(0) = 1, \quad \widehat{J}_0^{(1)}(0) = 2 \theta - 2 \gamma_2 \lambda, \quad 
    \textrm{\rm and} \quad \widehat{J}^{(2)}_0(0) = (2\theta - 2 \gamma_2 \lambda)^2-2 \gamma_1 \sigma_j^2 -2\theta .
 $
\end{enumerate}
Then one has that
\[ K_s^{(\lambda)}(t) = \frac{\lambda^2 (1+s)^{2 \theta} \widehat{K}_s(t)}{(1+t)^{2 \theta}} \quad \text{and} \quad G^{(\lambda)}(t) = \frac{ \widehat{J}_0(t)}{2(1+t)^{2\theta}}.\]
\end{corollary}

\begin{corollary}[Kernel representation, general SDAHB] \label{cor:kernel_aleph} Consider the inhomogeneous ODE in \eqref{eq:JEQ_SHB}. Define the differential operators with $\lambda \defas \sigma_j^2$
\begin{enumerate}
    \item $L[\widehat{K}_s](t) = 0, \, \, t \ge s, \, \, \widehat{K}_s(t) = 0, \, \,  t < s \\ \text{\rm where} \quad \widehat{K}_s(s) = \gamma_2^2, \quad \widehat{K}_s'(s) = 2 \gamma_2 \gamma_1 + 2\gamma_2^2\left (\theta-\gamma_2 \lambda \right ), \quad \text{\rm and}\\ \quad \widehat{K}_s''(s) = 2\gamma_1(\gamma_1 + 3 \gamma_2 \theta - 4\gamma_2^2 \lambda) + 2 \gamma_2^2 \big [ 2\theta^2 - 4 \gamma_2 \lambda \theta + 2 \gamma_2^2 \lambda^2 \big ] $.
    \item $L[\widehat{J}_0(t)] = 0\\ \text{\rm where} \, \, \widehat{J}_0(0) = 1, \, \, \widehat{J}^{(1)}_0(0) = 2 \theta-2\gamma_2 \lambda, \, \, \text{\rm and} \, \,  \widehat{J}_0^{(2)}(0) =  (2\theta - 2 \gamma_2 \lambda)^2 -2\gamma_1\lambda 
    $.
\end{enumerate}
Then one has that
\[ K_s^{(\lambda)}(t) = \lambda^2 e^{2 \theta (s-t)} \widehat{K}_s(t) \quad \text{and} \quad G^{(\lambda)}(t) = \frac{e^{-2\theta t} \widehat{J}_0(t)}{2}.\]
\end{corollary}

\section{Relating Homogenized SGD to SGD on the random least squares problem}\label{sec:hSGDlsq}

\subsection{ Heuristic reduction}
In this section, we give a nonrigorous derivation of the homogeneous sGD which holds in general.  In the next section, we give a proof of Theorem \ref{thm:SGDthm} which applies in the case of $\gamma_1 =0$ using the results from \cite{paquetteSGD2021}.

We are considering the SDA class of algorithms \eqref{eq:SDA} which, for $\xx_1 \in \mathbb{R}^d$ and $\yy_0 = 0$,
\begin{equation}
    \begin{gathered}
    \yy_k = \left (1- \Delta(k) \right ) \yy_{k-1} + \frac{\gamma_1}{n} \nabla f_{i_k} (\xx_k) \quad \text{and} \quad 
    \xx_{k+1} = \xx_k - {\gamma_2} \nabla f_{i_k}(\xx_k) - \yy_k,
    \end{gathered}
\end{equation}
Here $\gamma_1, \gamma_2 > 0$ are step sizes and $\Delta$ is a function of the iteration $k$ and number of samples $n$ such that 
\[
\Delta(k)
\defas
\Delta(k,n) \defas \tfrac{1}{n}(\log \varphi)'(\tfrac{k}{n}).
\]
Recall that our two motivating cases are SDANA for which $\Delta(k,n) = \frac{\theta}{k + n}$ and SDAHB for which $\Delta(k,n) = \frac{\theta}{n}$.

Recall that we consider the normalized least squares problem
\[
f(\xx)
= \frac{1}{2}\|\AA \xx - \bb\|^2,
\]
and we use the singular value decomposition of $\UU \SSigma \VV^T$ of $\AA$, with singular values $\sigma_1 \geq \sigma_2 \geq \sigma_3 \cdots$ in decreasing order.  We then let $\nnu_t = \VV^T (\XX_t-\widetilde{\xx})$.  We recall that 
\[
\nabla f( \XX_t)
= \AA^T (\AA \XX_t - \bb)
\quad
\text{and}
\quad
\nabla^2 f = \AA^T \AA.
\]
Hence, we may change the basis to write 
\[
    \begin{aligned}
    &\dif (\VV^T \XX_t) = 
    -\gamma_2 
    \dif (\VV^T \ZZ_t)
    -\frac{\gamma_1}{\varphi(t)}
    \int_0^t \varphi(s) \dif (\VV^T \ZZ_t), \\
    &\dif (\VV^T \ZZ_t) = \SSigma^T (\SSigma \nnu_t- \UU^T \bb)\dif t +\sqrt{\tfrac{2}{n}f(\XX_t) \SSigma^T \SSigma} \dif (\VV^T \BB_t).
  \end{aligned}
\]
The loss values we may also represent in terms of $\nnu$
\[
f(\XX_t)
= 
\frac{1}{2}
\| \AA \XX_t - \bb\|^2
=
\frac{1}{2}
\| \SSigma \nnu_t - \UU^T\eeta\|^2
=
\frac{1}{2}\sum_{j=1}^n
( \sigma_j \nu_{t,j} - (\UU^T \bb)_j)^2
.
\]

We let $\widehat{\ww}_k = \tfrac{n}{\gamma_1}\VV^T \yy_k$ and $\widehat{\nnu}_k = \VV^T(\xx_k-\widetilde{\xx})$ (with $\ww_0 = \bm{0}$ and $\widehat{\nnu}_1 \in \mathbb{R}^d$, $k \ge 1$) so that for a random rank-1 coordinate projection matrix $\PP_k$
\begin{align}
    &\widehat{\ww}_k =  \big (1-\Delta(k,n) \big ) \widehat{\ww}_{k-1} + \SSigma^T \UU^T \PP_k (\UU \SSigma \widehat{\nnu}_k-\eeta), \quad\text{and} \quad\\
    &
    \widehat{\nnu}_{k+1} = \widehat{\nnu}_k - \gamma_2 \SSigma^T \UU^T \PP_k (\UU \SSigma \widehat{\nnu}_k-\eeta) - \tfrac{\gamma_1}{n} \widehat{\ww}_k.
\end{align}
By unraveling the recurrence for $\widehat{\ww}$, a simple computation shows that  
\begin{equation}
    \widehat{\ww}_k = \sum_{\ell = 1}^{k} \prod_{i=\ell}^{k-1} \big [ 1- \Delta(i,n) \big ] \SSigma^T \UU^T \PP_{\ell} (\UU \SSigma \widehat{\nnu}_{\ell}-\eeta).
\end{equation}
We now create a continuous time version of the vector $\widehat{\ww}_k$ so that $t$ and $s$ correspond to one pass over the data set. In doing so, we can approximate the product $\prod_{i=\ell}^{k-1} 1-\Delta(i,n)$ by first taking logarithms and then approximating the sum with a Riemann integral. If we let $\ell = ns$ and $k = nt$, 
\begin{align*} \prod_{i = \ell}^{k - 1} \big [1 - \Delta(i,n) \big ] &= \prod_{i=ns}^{nt} \big [1- \Delta(i,n) \big ] = \exp \bigg ( \sum_{i=ns}^{nt} \log\big (1-\Delta(i,n) \big ) \bigg )\\
&\approx \exp \bigg ( -\sum_{i=ns}^{nt} \Delta(i,n) \bigg ) = \exp \bigg (-\frac{1}{n} \sum_{i=ns}^{nt} (\log \varphi)'( \tfrac{i}{n} ) \bigg )\\
&\approx \exp \left ( -\int_s^t (\log \varphi)'(u) \, \dif u \right  ) =  \frac{\varphi(s)}{\varphi(t)}. 
\end{align*}
We are trying to isolate the martingale term in $\widehat{\ww}_k$ so we need to find the mean behavior of $\widehat{\ww}$. As such, 
\begin{align*}
    \widehat{w}_{k,j} &= \sum_{\ell = 1}^{k} \prod_{i=\ell}^{k-1} \big [1 - \Delta(i,n) \big ] \big ( \mathbb{e}_j^T \SSigma^T \UU^T \PP_{\ell} (\UU \SSigma \nnu_{\ell}-\eeta) - \big ( \tfrac{\sigma_j^2}{n} \widehat{\nu}_{\ell,j} - \tfrac{\sigma_j}{n} (\UU^T \eeta)_j \big ) \big )\\
    &+ \sum_{\ell=1}^{k} \prod_{i=\ell}^{k-1} \big [1- \Delta(i,n) \big ] \big ( \tfrac{\sigma_j^2}{n} \widehat{\nu}_{\ell,j} - \tfrac{\sigma_j}{n} (\UU^T \eeta)_j \big ).
\end{align*}
Define the martingale increment $\Delta \widehat{M}_{\ell,j} \defas \mathbb{e}_j^T\SSigma^T \UU^T \PP_{\ell} (\UU \SSigma \nnu_{\ell} -\eeta) - \big ( \tfrac{\sigma_j^2}{n} \widehat{\nu}_{\ell,j} - \tfrac{\sigma_j}{n} (\UU^T \eeta)_j \big )$. Then 
\begin{align*}
    \widehat{w}_{k,j} = \sum_{\ell = 1}^{k} \prod_{i = \ell}^{k-1} \big [ 1- \Delta(i,n) \big ] \Delta \widehat{M}_{\ell,j} + \sum_{\ell = 1}^{k} \prod_{i=\ell}^{k-1} \big [1-\Delta(i,n) \big ] \big (\tfrac{\sigma_j^2}{n} \widehat{\nu}_{\ell,j} - \tfrac{\sigma_j}{n} (\UU^T \eeta)_j \big )
\end{align*}
We now pass to the continuous time by letting $k \sim n t$.  So we define a continuous time, purely discontinuous martingale $M_t$ with jumps at times $\N/n$ which are given by
\[
(\Delta M)_{{\ell}/{n},j}
\defas
\mathbb{e}_j^T\SSigma^T \UU^T \PP_{\ell} (\UU \SSigma \nnu_{\ell/n} -\eeta) - n^{-1}\big ( {\sigma_j^2} {\nu}_{\ell/n,j} - {\sigma_j} (\UU^T \eeta)_j \big )
\]
In terms of this martingale, we define c\`adl\`ag processes $w_{t,j}$ and $\nu_{t,j}$ as approximations for $\widehat{\ww}_{k,j}$
and $\widehat{\nnu}_{k,j}.$
For $\ww$ this is given by
\[ w_{t,j} = \frac{1}{\varphi(t)} \int_0^t \varphi(s)
\bigl(
\dif M_{s,j} + \bigl(\sigma_j^2 \nu_{s,j} - \sigma_j (\UU^T\eeta)_j\bigr)\dif s\bigr).\]
As for $\nnu,$ we must compute also compute the change in $\widehat{\nu}_{k,j}$:
\begin{align*}
    \widehat{\nu}_{k+1, j}-\widehat{\nu}_{k,j} &= -\mathbb{e}_j^T \gamma_2 \SSigma^T \UU^T \PP_k (\UU \SSigma \widehat{\nnu}_{k}-\eeta) - \frac{\gamma_1}{n} \widehat{w}_{k,j}.
\end{align*}
Again on scaling time to be like $k \sim nt,$ we arrive at a continuous time stochastic evolution
\begin{align*}
\dif \nu_t    &= - \gamma_2 \big (\dif M_{t,j} + {\sigma_j^2} \nu_{t,j} - {\sigma_j} (\UU^T \eeta)_j \big )
-\frac{\gamma_1}{\varphi(t)} \int_0^t \varphi(s)
    \bigl(
    \dif M_{s,j} + \bigl(\sigma_j^2 \nu_{s,j} - \sigma_j (\UU^T\eeta)_j\bigr)\dif s\bigr).
\end{align*} 
Thus this is exactly the homogenized SGD \eqref{eq:dnu}, but with the martingales $\{ W_{t,j}\}$ replaced by $\{M_{t,j}\}$.

The martingales $\{M_{t,j}\}$ are purely discontinuous.  Their predictable quadratic variations are given by (ignoring errors induced by smoothing the indexing)
\[
\dif \langle M_{t,j}, M_{t,i}
\rangle
=
\sigma_j^2 
\sum_{\ell=1}^n
U_{\ell,j}
U_{\ell,i}
\bigl((\UU \SSigma {\nnu}_{t}-\eeta)_\ell\bigr)^2
-
n^{-1}
\big ( {\sigma_j^2} {\nu}_{t,j} - {\sigma_j} (\UU^T \eeta)_j \big )
\big ( {\sigma_i^2} {\nu}_{t,i} - {\sigma_i} (\UU^T \eeta)_i \big ).
\]
The latter term is too small to recover and so disappears in the large-$n$ limit.  Note that in the first sum, if $(\UU \SSigma {\nnu}_{t}-\eeta)_\ell$ could be decoupled from $U_{\ell,j}
U_{\ell,i}$ and if $(U_{\ell,j}^2: 1 \leq\ell\leq n)$ is sufficiently delocalized, then we would arrive at
\[
\dif \langle M_{t,j}, M_{t,i}
\rangle
\approx
\delta_{i,j}
{\sigma_j^2 }
\sum_{\ell=1}^n
U^2_{\ell,j}
\bigl((\UU \SSigma {\nnu}_{t}-\eeta)_\ell\bigr)^2
\approx
\delta_{i,j}
\frac{2\sigma_j^2}{n} f(\XX_t),
\]
from the fact that $U^2_{\ell,j} \approx \tfrac{1}{n}$ on average.
This is the homogenized SGD.  The main input is sufficiently strong input information on the eigenvector matrix $\UU$.  In \cite{paquetteSGD2021}, it is assumed that this is independent of the spectra and Haar orthogonally distributed.  We expect it remains true under weaker assumptions, but note some type of eigenvector assumption is needed.  If for example $\AA$ is diagonal, the resulting coordinate processes decouple entirely, as opposed to interacting through the loss values $\psi$.

\subsection{Proof of correspondence for SGD}

We give a proof of Theorem \ref{thm:SGDthm}, or rather show how \cite{paquetteSGD2021} (which contains the argument) may be adapted to show the statement claimed.  The starting point is an embedding of the discrete problem into continuous time.  That is we create a homogeneous Poisson process $N_t$ with rate $n$ (so that in one unit of time, in expectation $n$ Poisson points arrive).  We then introduce the notation 
\begin{equation} \begin{aligned} \label{eq:psidef}
    \psi_{\varepsilon}(t) \defas f(\xx_{N_t})
    &=
    \frac{1}{2}\| \AA (\xx_{N_t}-\widetilde{\xx}) -\, \eeta \|^2
    =
    \frac{1}{2}\| \SSigma \nnu_t - \UU^T\eeta\|^2.\\
    &=\frac{1}{2}
\sum_{j=1}^d \sigma_j^{2}\nu_{t,j}^2
-\sum_{j=1}^{n \wedge d} \sigma_j\nu_{t,j}\, (\UU^T\eeta)_j 
+\frac{1}{2}\|\eeta\|^2.
\end{aligned}
\end{equation}
By partially integrating the equation, we can rewrite this equation (see the derivation \cite[Equation (41)]{paquetteSGD2021} through \cite[Lemma 21]{paquetteSGD2021} -- note we are using batchsize $\beta=1$).
\begin{equation}\label{eq:psivolterra}
\begin{aligned}
\psi_{\varepsilon}(t)
&=
\frac{1}{2}
\sum_{j=1}^n \sigma_j^{2}
\biggl(e^{-2t\gamma\sigma_j^2}\nu_{0,j}^2 + \int_0^t e^{-2(t-s)\gamma\sigma_j^2}\gamma^2\frac{2\sigma_j^2\psi_{\varepsilon}(s)}{n}\dif s\biggr) \\ 
&+
\frac{1}{2}
\sum_{j=1}^n
\int_0^t 
e^{-2(t-s)\gamma\sigma_j^2}
\gamma 2\sigma_j^3\nu_{s,j}(\UU^T\eeta)_j \dif s
+\frac{1}{2}\|\eeta\|^2
-\sum_{j=1}^{n \wedge d} \sigma_j\nu_{t,j}\, (\UU^T\eeta)_j \\
&+
\varepsilon^{(n)}_{\operatorname{KL}}(t)
+
\varepsilon^{(n)}_{\operatorname{M}}(t)
\end{aligned}
\end{equation}
The two error terms $\varepsilon^{(n)}_{\operatorname{KL}}(t)$ and $\varepsilon^{(n)}_{\operatorname{M}}(t)$ are (first) due to the eigenvectors not being perfectly delocalized and (second) due to the randomness of SGD.  Note that we can write this in terms of the notation for Theorem \ref{thm:bighSGD} by
\begin{equation}\label{eq:newpsivolterra}
\begin{aligned}
\psi_{\varepsilon}(t) &= 
F_{\HH}(t) + \int_0^t \mathcal{K}_s(t) \psi_{\varepsilon}(s) \dif s
+
\varepsilon^{(n)}_{\operatorname{KL}}(t)
+
\varepsilon^{(n)}_{\operatorname{M}}(t)
+
\varepsilon^{(n)}_{\operatorname{xtra}}(t),\quad\text{where} \\
\varepsilon^{(n)}_{\operatorname{xtra}}(t)
&= 
\frac{1}{2}
\sum_{j=1}^n \sigma_j^{2}
\biggl(e^{-2t\gamma\sigma_j^2}\bigl( (\nu_{0,j}-(\UU^T\eeta)_j/\sigma_j)^2 - R/n - \widetilde{R}/(n\sigma_j^2)\bigr) \\
&+
\frac{1}{2}
\sum_{j=1}^n
\int_0^t 
e^{-2(t-s)\gamma\sigma_j^2}
\gamma 2\sigma_j^3\nu_{s,j}(\UU^T\eeta)_j \dif s\\
\end{aligned}
\end{equation}
The main errors are controlled directly using the results of \cite{paquetteSGD2021}.
\begin{proposition}\label{prop:SGDconcentration}
There is an $\delta > 0$ so that 
for any $T>0$ there is a constant $C(T,\lambda_{\HH}^+) >0$ so that
\[
\Pr[
\sup_{0 \leq t \leq T}
\bigl\{
|\varepsilon^{(n)}_{\operatorname{KL}}(t)|
+
|\varepsilon^{(n)}_{\operatorname{M}}(t)|
\bigr\}
\geq C(T)n^{-\delta}~\vert~ \SSigma
] \leq n^{-\delta}.
\]
\end{proposition}
\begin{proof}
In short this the combination of 
\cite[Lemma 13]{paquetteSGD2021},
\cite[Lemma 14]{paquetteSGD2021},
\cite[Proposition 16]{paquetteSGD2021}.  We note that the event that dominates the probability is the application of \cite[Lemma 13]{paquetteSGD2021}, which is simply Markov's inequality applied to the loss.
\end{proof}
The extra errors are controlled by \cite[Proposition 19]{paquetteSGD2021} and by concentration of the initial conditions.  Thus by \eqref{eq:newpsivolterra}, we have that the true loss of SGD satisfies an approximate Volterra equation.  Using the stability of Volterra equations with respect to perturbation \cite[Proposition 11]{paquetteSGD2021}, we can conclude that $\psi_\epsilon$ is close to a solution of the Volterra equation with $\varepsilon = 0$, with the claimed probability.

\section{Analysis of convolution Volterra equations: convergence and rates}\label{sec:rates}

In what follows, we give an analysis of a class of convolution Volterra equations that appear naturally in the SDA context: our analysis will give convergence guarantees, convergence rates and limiting losses (in the underparameterized context.  Ultimately, for all of SGD, SDAHB and SDANA, we will have the task of describing the evolution of the training loss $L(t)=\E_{\HH}[f(X_t)]$ which satisfies
\begin{equation}\label{q:V}
  \begin{aligned}
    &L(t) = F(t) + \int_0^t \mathcal{I}(t-s) L(s),
  \quad
  &\text{for all $t \geq 0$,} \\
  &F(t) = \int_0^\infty (R\lambda + \widetilde{R})G^{(\lambda)}(t) \mu(d\lambda),
  \quad
  \text{and}
  \quad
  \mathcal{I}(t) = \int_0^\infty K^{(\lambda)}(t) \mu(d\lambda),
  &\text{for all $t \geq 0$.} \\
  \end{aligned}
\end{equation}
We refer to $F$ as the forcing function and $\mathcal{I}$ as the convolution kernel.
The measure $\mu$ is the limiting spectral measure of the Hessian problem (some parts of the analysis also hold with $\mu$ the actual empirical spectral measure of the problem).
In all cases, operate under the following assumptions.
\begin{assumption}\label{q:genV}
  The functions 
  $(\lambda,t) \mapsto G^{(\lambda)}(t)$
  and 
  $(\lambda,t) \mapsto K^{(\lambda)}(t)$
  are non-negative, continuous and bounded on bounded sets of $\lambda$.  Assume further that for each $\lambda > 0$, $K$ and $G$ tend to $0$ as $t \to \infty$.  At $\lambda = 0,$ on the other hand $K^{(\lambda)} \equiv 0$ and $G^{(\lambda)} \equiv 1.$
\end{assumption}

In this section, we shall also do the analysis for SGD.  Much of SGD analysis appeared already in \cite{paquetteSGD2021}, but it serves as an instructive and simple example.  Recall that for the case of SGD, we have that
\begin{equation}\label{q:SGDk}
  G_{\text{sgd}}^{(\lambda)}(t)
  =
  e^{-2\gamma \lambda t}
  \quad
  \text{and}
  \quad
  K_{\text{sgd}}^{(\lambda)}(t)
  = \gamma^2 \lambda^2 e^{-2\gamma \lambda t}
\end{equation}

The actual convergence analysis of $L$ in this setup is relatively simple.  As a consequence of dominated convergence, we have $F$ converges as $t \to \infty$, and in fact
\[
  \lim_{t \to \infty}
  F(t) = \mu(\{0\}) \widetilde{R}.
\]
The important input to ensure convergence is that the norm of the convolution kernel is controlled.
\begin{equation}\label{q:kernel}
  \|\mathcal{I}\|
  =
  \int_0^\infty \mathcal{I}(t) dt
  =
  \int_0^\infty
  \int_0^\infty  K^{(\lambda)}(t) dt \mu(d\lambda)
\end{equation}

Thus by dominated convergence, we have the following:
\begin{lemma}\label{q:limit}
  Suppose Assumption \ref{q:genV} and
  suppose $\|\mathcal{I}\| < 1$.  Then 
  \[
    \lim_{t \to \infty}
    L(t) = \frac{\mu(\{0\}) \widetilde{R}}{1- \|\mathcal{I}\|}.
  \]
\end{lemma}
For SGD, in particular that means (using \eqref{q:kernel})
\begin{equation}\label{q:sgd}
  \|\mathcal{I}_{\text{sgd}}\|
  =
  \int_0^\infty
  \int_0^\infty  K_{\text{sgd}}^{(\lambda)}(t) dt \mu(d\lambda)
  =
  \int_0^\infty
  \frac{\gamma \lambda}{2}
  \mu(d\lambda)
  =
  \frac{\gamma \mathfrak{tr}(\mu)}{2},
\end{equation}
where $\mathfrak{tr}(\mu)$ is the limiting normalized trace of the Hessian, i.e.\ the first moment of the measure $\mu.$

\paragraph{Rates (heavy-tailed case)} 
The rate analysis is divided into two cases, according to the behavior of the forcing function $F$.  If $F$ converges exponentially quickly to its limit (which occurs in our applications when the spectrum of $\mu$ is separated from $0$), then the forcing function converges exponentially.  On the other hand, if $\mu$ has a density in a neighborhood of $0$, then the rate is subexponential, and we will suppose further that $F$ and $\mathcal{I}$ are both tending slowly to $0$.
\begin{assumption}\label{q:rv}
  The function $F$ dominates $\mathcal{I}$ and
  $\mathcal{I}/\|\mathcal{I}\|$ defines a subexponential distribution: that is
  \[
    F(t)/\mathcal{I}(t) \to \infty
    \quad
    \text{and}
    \quad
    \frac{\int_T^\infty \int_0^t \mathcal{I}(t-s)\mathcal{I}(s)ds dt}
    {
      \int_T^\infty \mathcal{I}(s)ds
    }\to 2\|\mathcal{I}\|.
  \]
\end{assumption}
A simple sufficient condition for the latter of the two conditions is that $\mathcal{I}(t) t^{\beta} \to c$ for some $c>0,\beta>0$ as $t \to \infty.$
In this case, the rate of convergence of $L$ to its limit
is 
\begin{lemma}\label{q:rvrate}
  Suppose Assumptions \ref{q:genV} and \ref{q:rv}
  and
  suppose $\|\mathcal{I}\| < 1$.  Then 
  \[
    L(t) - \frac{\mu(\{0\}) \widetilde{R}}{1- \|\mathcal{I}\|}
    \sim_{t\to\infty}
    \frac{F(t) - \mu(\{0\})\widetilde{R}}{1- \|\mathcal{I}\|}
    .
  \]
\end{lemma}
In other words, the rate is completely dominated by whichever rate $F$ takes. 
\begin{proof}
  If we subtract the limiting behavior from $L$, we have
  \[
    \begin{aligned}
    L(t)-
    \frac{\mu(\{0\}) \widetilde{R}}{1- \|\mathcal{I}\|}
    &=
    F(t) - \mu(\{0\}) \widetilde{R}
    +
    \int_0^t \mathcal{I}(t-s)
    \biggl(L(s)-
    \frac{\mu(\{0\}) \widetilde{R}}{1- \|\mathcal{I}\|}\biggr)
    ds
  \end{aligned}
  \]
  Thus if we set 
  \[
    \widehat{L} = L-\frac{\mu(\{0\}) \widetilde{R}}{1- \|\mathcal{I}\|}
    \quad\text{and}\quad
    \widehat{F} = F-{\mu(\{0\}) \widetilde{R}},
  \]
  we conclude that
  \[
    \widehat{L}(t) = \widehat{F}(t) + \int_0^t \mathcal{I}(t-s)\widehat{L}(s)ds.
  \]
  This is a defective renewal equation, and moreover it is a defective renewal equation in which $\mathcal{I}/\|\mathcal{I}\|$ defines a \emph{subexponential distribution}.  Thus from \cite[(7.8)]{Asmussen}, we conclude the claim.
\end{proof}

To apply this to SGD, we suppose that
\begin{equation}\label{q:le}
\mu((0,\epsilon]) \underset{\epsilon \to 0}{\sim} \ell \epsilon^{\alpha}.
\end{equation}
In this case, we conclude that
\[
  \mathcal{I}_{\text{sgd}}(t) \underset{t \to \infty}{\sim}
  \frac{\Gamma(2+\alpha)\alpha \ell \gamma^2}{ (2\gamma t)^{2+\alpha}}
  \quad\text{and}\quad
  F_{\text{sgd}}(t)
  -\mu(\{0\})\widetilde{R} \underset{t \to \infty}{\sim}
  \frac{R\Gamma(1+\alpha)\alpha \ell}{ (2\gamma t)^{1+\alpha}}
  +\frac{\widetilde{R}\Gamma(\alpha)\alpha \ell}{ (2\gamma t)^{\alpha}}.
\]
Thus we conclude the rate for the loss of SGD (using \eqref{q:SGDk} and Lemma \ref{q:rvrate}
\begin{equation}\label{q:sgdht}
\E_{\HH}[f(X_t)]- 
\frac{\mu(\{0\}) \widetilde{R}}{1-\frac{\gamma \mathfrak{tr}(\mu)}{2}}
\underset{t \to \infty}{\sim}
\frac{1}{1-\frac{\gamma \mathfrak{tr}(\mu)}{2}}
\biggl(
\frac{R\Gamma(1+\alpha)\alpha \ell}{ (2\gamma t)^{1+\alpha}}
+\frac{\widetilde{R}\Gamma(\alpha)\alpha \ell}{ (2\gamma t)^{\alpha}}
\biggr).
\end{equation}

\paragraph{Rates (exponential case)} 
We now consider the case $F$ and $\mathcal{I}$ tend to $0$ exponentially, as is the case when $\mu$ has support $\{0\} \cup [\lambda_-,\lambda_+]$ for positive $\lambda_-.$  We enforce these assumptions by assuming that both $G^{(\lambda)}$ and $K^{(\lambda)}$ behave well in a neigbhorhood of the spectral edge.
\begin{assumption}\label{q:er}
  The support of $\mu$ is contained in $\{0\} \cup [\lambda_-,\lambda_+]$ for some $\lambda_- > 0,$ and $\lambda_-$ is in the support.
  The kernels satisfy for some positive strictly increasing function $f$ on some $[\lambda_-,\lambda_-+\delta]$
\[
  G^{(\lambda)}(t) = e^{-(f(\lambda)+o(1))t}
  \quad
  \text{and}
  \quad
  K^{(\lambda)}(t) = e^{-(f(\lambda)+o(1))t}
\]
as $t \to \infty$.  Both $G^{(\lambda)}(t)$ and $K^{(\lambda)}(t)$ are bounded by $e^{-(f(\lambda)+o(1))t}$ for larger $\lambda$ as $t \to \infty$.
\end{assumption}

To estimate the rate, we need to introduce the Laplace transform of the kernel 
\begin{equation}\label{q:LT}
  \mathcal{F}(x)
  \defas
  \int_0^\infty e^{xt} \mathcal{I}(t)dt
  =
  \int_0^\infty 
  \int_0^\infty e^{xt} K^{(\lambda)}(t) dt \mu(dx)
\end{equation}
We define the Malthusian exponent $\lambda^*$, if it exists, as the solution of
\begin{equation}\label{q:ME}
  \mathcal{F}(\lambda^*)=1.
\end{equation}
The Malthusian exponent gives the right behavior, on exponential scale for the rate of convergence, when it exists.  Otherwise it is simply $e^{-f(\lambda_-)t}$:
\begin{lemma}\label{q:errate}
  Suppose Assumptions \ref{q:er} and \ref{q:genV} and suppose $\|\mathcal{I}\|<1$.  
  Then 
  \[
    L(t)-
    \frac{\mu(\{0\}) \widetilde{R}}{1- \|\mathcal{I}\|}
    =\begin{cases}
      e^{-(\lambda^*+o(1))t}, & \text{if $\lambda^*$ exists}, \\
      e^{-(f(\lambda_-)+o(1))t}, & \text{otherwise},
    \end{cases}
  \]
  as $t\to\infty.$
  Furthermore $\lambda^* \leq f(\lambda_-)$ if it exists.
\end{lemma}
\begin{proof}
  First, if the Malthusian exponent exists, we observe it must be positive, since by assumption
  \[
    \mathcal{F}(0) = \int_0^\infty \mathcal{I}(t) dt = \|\mathcal{I}\| < 1,
  \]
  and the function $x \mapsto \mathcal{F}(x)$ is increasing (and continuously differentiable on $[0,\lambda_-)$).
  Furthermore, the Laplace transform $\mathcal{F}(x)=\infty$ for any $x > f(\lambda_-)$ as $\lambda_-$ is in the support of $\mu$ and $f$ is increasing.  Hence $\lambda^*$ if it exists is in $(0,f(\lambda_-)].$

  It follows that if the Malthusian exponent does not exist, then $\mathcal{F}(f(\lambda_-)) < 1.$
  In that case we can transform \eqref{q:V} by taking
  \[
    \widehat{L}(t) = e^{f(\lambda_-)t}\biggl(L(t)-\frac{\mu(\{0\}) \widetilde{R}}{1- \|\mathcal{I}\|}\biggr)
    \quad\text{and}\quad
    \widehat{L}(t) = e^{f(\lambda_-)t}\biggl(F(t)-{\mu(\{0\}) \widetilde{R}}\biggr),
  \]
  which therefore satisfies
  \[
    \widehat{L}(t)
    =
    \widehat{F}(t)
    +
    \int_0^t
    e^{f(\lambda_-)s}
    \mathcal{I}(s)
    \widehat{L}(t-s)
    ds.
  \]
  The function $\widehat{F}(t)$ grows subexponentially in $t$ by hypothesis, and since $e^{f(\lambda_-)s}
    \mathcal{I}(s)$ has norm less than $1$ (its norm being $\mathcal{F}(f(\lambda_-))$) it follows that
    \[
    \widehat{L}(t)
    =
    \widehat{F}(t)
    +
    \int_0^t
    R(s)
    \widehat{F}(t-s)
    ds,
    \]
    for the $\text{L}^1$-resolvent kernel of $e^{f(\lambda_-)s}\mathcal{I}(s)$, which is the infinite series of convolution powers of this kernel. Hence $\widehat{L}(t)$ grows at most subexponentially and at least as fast as $\widehat{F},$ from which we conclude that $L(t)$ behaves like $e^{-(f(\lambda_-)+o(1))t}$.

    If $\mathcal{F}(f(\lambda_-)) \geq 1,$ we instead have a nontrivial Malthusian exponent inside of $(0,f(\lambda_-)$.  We therefore have after making the transformation
      \[
    \widehat{L}(t) = e^{\lambda^* t}\biggl(L(t)-\frac{\mu(\{0\}) \widetilde{R}}{1- \|\mathcal{I}\|}\biggr)
    \quad\text{and}\quad
    \widehat{L}(t) = e^{\lambda^* t}\biggl(F(t)-{\mu(\{0\}) \widetilde{R}}\biggr),
  \]
  that $\widehat{L}(t)$ solves Blackwell's renewal equation (see \cite[Theorem 4.7]{Asmussen}.  If $\mathcal{F}(f(\lambda_-)) > 1,$ then the Laplace transform $\mathcal{F}$ is differentiable at $\lambda^*,$ and so the renewals have finite mean $\mu = \mathcal{F}(\lambda^*)'$.  In particular it follows that $\widehat{L}(t)$ actually converges to $\frac{1}{\mu}\int_0^\infty \widehat{F}(t) dt$, which is finite by the exponential growth condition.
  
  In the critical case $\mathcal{F}(f(\lambda_-))=1$, we observe that
  \[
    \max_{[0,t]}
    \widehat{L}(u)
    \leq
    \max_{[0,t]}
    \widehat{F}(u)
    +
    \bigl(\max_{[0,t]}
    \widehat{L}(u)
    \bigr)
    \int_0^t
    e^{f(\lambda_-)s}
    \mathcal{I}(s)
    ds,
  \]
  thus rearranging, and using that $\int_0^t
  e^{f(\lambda_-)s}
  \mathcal{I}(s)
  ds \to 1$ as $t \to \infty$ we conclude
  \[
    \max_{[0,t]}
    \widehat{L}(u)
    \leq 
    \frac{\bigl(
      \max_{[0,t]}
      \widehat{F}(u)
    \bigr)}
    {\int_t^\infty
      e^{f(\lambda_-)s}
      \mathcal{I}(s)
    ds},
  \]
  which therefore grows subexponentially.  
\end{proof}

For SGD, the $K$ and $G$ (recall \eqref{q:SGDk}) satisfy Assumption \ref{q:er} trivially with $f(\lambda) = 2\gamma \lambda$.  Hence, to esetimate the rate, by which we mean
\[
  \text{sgd-rate}(\gamma) \defas \liminf_{t\to\infty} \frac{-\log\bigl(\E_{\HH}[ f(\XX_t)] - (\E_{\HH}[ f(\XX_\infty)]\bigr)}{t},
\]
the only task is to estimate the Malthusian exponent.  
\begin{lemma}\label{q:default}
  At the default parameter for SGD, $\gamma=\tfrac{1}{\mathfrak{tr}(\mu)}$, the rate is at least $\tfrac{\lambda_-}{\mathfrak{tr}(\mu)}$.  The maximum rate over all $\gamma$ is at most $4\tfrac{ \lambda_-}{\mathfrak{tr}(\mu)}.$
\end{lemma}
\begin{proof}
  Note that the Laplace transform is given by
  \[
    \mathcal{F}_{\text{sgd}}(x)
    =
    \int_0^\infty e^{xt}\mathcal{I}_{\text{sgd}}(t)dt
    =
    \int_0^\infty \frac{\gamma^2\lambda^2}{2\gamma\lambda - x} \mu(dx).
  \]
  Note that if we choose the default parameter $\gamma=\tfrac{1}{\mathfrak{tr}(\mu)}$, then at $x=\tfrac{\lambda_-}{\mathfrak{tr}(\mu)}$ we have
  \[
    \mathcal{F}_{\text{sgd}}(\tfrac{\lambda_-}{\mathfrak{tr}(\mu)})
    =
    \frac{1}{\mathfrak{tr}(\mu)}\int_0^\infty \frac{\lambda^2}{2\lambda - \lambda_-} \mu(dx)
    \leq
    \frac{1}{\mathfrak{tr}(\mu)}
    \int_0^\infty \frac{\lambda^2}{\lambda} \mu(dx)
    \leq 1.
  \]
  It follows that we have shown that the rate at the default parameter is at least $\tfrac{\lambda_-}{\mathfrak{tr}(\mu)}$.  

  To get an upper bound over all step sizes, note that from \eqref{q:sgd}, the largest $\gamma$ we can take is determined by
  \[
    \frac{\gamma \mathfrak{tr}(\mu)}{2}
    =
    \|\mathcal{I}_{\text{sgd}}\|
    <1.
  \]
  Further, the fastest rate we can ever attain is $e^{-2\gamma \lambda_- t},$ and hence taking the largest $\gamma,$ the fastest possible rate is $e^{-\tfrac{4 \lambda_-}{\mathfrak{tr}(\mu)} t}$.
\end{proof}

\section{Momentum can be faster, SDANA}\label{sec:SDANA}
In this section, we consider the SDANA case in depth, developing approximations and limit behaviors for the differential equations for which one achieves acceleration in the non-strongly convex setting. 
Recall from the ODE for $\widehat{J}$ in \eqref{eqE:hatJ_genera} and the initial conditions \eqref{eqE:JIC_general}, 
\begin{equation}\label{eqE:hatJ}
    \begin{aligned}
    &\widehat{J}^{(3)} - \left ( \tfrac{3 \theta}{(1+t)} - 3 \gamma_2 \sigma_j^2 \right ) \widehat{J}^{(2)} - \left ( -\tfrac{5 \theta + 2 \theta^2}{(1+t)^2} + \tfrac{4 \gamma_2 \sigma_j^2 \theta}{ (1+t)} -4\gamma_1\sigma_j^2 - 2 \gamma_2^2 \sigma_j^4 \right ) \widehat{J}^{(1)} \\
    &- \left ( \tfrac{4 \theta + 4 \theta^2}{ (1+t)^3} - \tfrac{4 \gamma_2 \sigma_j^2 \theta }{ (1+t)^2} + \tfrac{4 \gamma_1 \sigma_j^2 \theta}{ (1+t) } - 4 \gamma_1 \gamma_2 \sigma_j^4 \right ) \widehat{J}\\
    &=
    \tfrac{\gamma_2^2}{\gamma_1}
    \widehat{\psi}^{(2)}
    +\bigl(
    2\gamma_2
    + 
    \bigl(\tfrac{-\theta}{(1+t)} + \gamma_2\sigma_j^2\bigr)\tfrac{\gamma_2^2}{\gamma_1}
    \bigr)
    \widehat{\psi}^{(1)}
    +\bigl(
    2\gamma_1
    + 
    \bigl(\tfrac{\theta}{(1+t)^2} + 2\gamma_1\sigma_j^2\bigr)\tfrac{\gamma_2^2}{\gamma_1}
    \bigr)
    \widehat{\psi},
    \end{aligned}
\end{equation}
where the initial conditions are given by
\begin{equation}\label{eqE:JIC}
\begin{gathered}
\widehat{J}(0) = \gamma_1^{-1}\EE\biggl[ \bigl(\nu_{0,j}-\tfrac{(\UU^T \eeta)_j}{\sigma_j}\bigr)^2\biggr],
\quad
 \widehat{J}^{(1)}(0) =
 \tfrac{\gamma_2^2}{\gamma_1} \widehat{\psi}(0)
-\widehat{J}(0) \bigl( -2\theta + 2\gamma_2 \sigma_j^2\bigr),
\quad\text{and} \\
\widehat{J}^{(2)}(0)
 =  \tfrac{\gamma_2^2}{\gamma_1} \widehat{\psi}^{(1)}(0)+ 2\gamma_2 \widehat{\psi}(0) - 2 \gamma_1 \sigma_j^2 \widehat{J}(0) + ( 2 \theta - 2 \gamma_2 \sigma_j^2) \widehat{J}^{(1)}(0) - 2\theta \widehat{J}(0).
\end{gathered}
\end{equation}

\begin{remark}\label{rk:whittaker}
It is possible to represent the solutions to the homogeneous ODE \eqref{eqE:hatJ} by
\begin{equation}
    \begin{aligned}
    \widehat{J}(t) = &c_1 \underbrace{(1+t)^{\theta} e^{-\gamma_2 \sigma_j^2  t} \,  \big ( \text{WhittakerM}(A, B, C) \big )^2}_{y_1} + c_2 \underbrace{(1+t)^{\theta} e^{-\gamma_2 \sigma_j^2 t} \big ( \text{WhittakerW} (A, B, C ) \big )^2}_{y_2}\\
    &+ c_3 \underbrace{(1+t)^{\theta} e^{-\gamma_2 \sigma_j^2 t} \big ( \text{WhittakerM}(A, B, C ) \cdot \text{WhittakerW}(A, B, C ) \big )}_{y_3}\\
    \text{where} & \quad A = \frac{\sigma_j \gamma_2 \theta}{2 \sqrt{\sigma_j^2 \gamma_2^2 - 4 \gamma_1} }, \quad B = \frac{\theta-1}{2}, \quad \text{and} \quad C = \sigma_j (1+t) \sqrt{\sigma_j^2 \gamma_2^2 - 4 \gamma_1}.
    \end{aligned}
\end{equation}
For multiple reasons, working with this representation appears to add complications: we need uniform asymptotic expansions as $\sigma$ tends to $0$. We also need estimates for the fundamental solutions with parameters in a neighborhood of the turning point $\sigma_j^2 \gamma_2^2 - 4 \gamma_1 \approx 0.$
\end{remark}

\subsection{The fundamental solutions of the scaled ODE}
To give uniform estimates as $\sigma_j \to 0,$ we will scale time $t$ by $\sigma_j$ and in doing so, we define a scaled differential equation for $\widetilde{J}(t) = \widehat{J}(t/\sigma_j)$. We develop properties of the fundamental solutions of the homogeneous version of the equation \eqref{eqE:hatJ}, given by
\begin{equation}\label{eqE:ODE}
\begin{aligned}
    \widetilde{L}[ \widetilde{J}]
    \coloneqq
    &\widetilde{J}^{(3)} - \left ( \frac{3 \theta}{\sigma_j+t} - 3 \gamma_2 \sigma_j \right ) \widetilde{J}^{(2)} - \left ( -\frac{5 \theta + 2 \theta^2}{(\sigma_j+t)^2} + \frac{4 \gamma_2 \sigma_j \theta}{\sigma_j+t} - 4 \gamma_1 - 2 \gamma_2^2 \sigma_j^2 \right ) \widetilde{J}'\\
    & - \left ( \frac{4 \theta + 4 \theta^2}{ (\sigma_j+t)^3} - \frac{4 \gamma_2 \sigma_j \theta }{ (\sigma_j+t)^2} + \frac{4 \gamma_1 \theta}{ \sigma_j+t } - 4 \gamma_1 \gamma_2 \sigma_j \right ) \widetilde{J}=0.\\
\end{aligned}
\end{equation}
One can, in principle, derive an exact solution for this ODE using Whittaker functions; the resulting solution is quite cumbersome. As such we develop families of local solutions in a neighborhood of $0$ and in neighborhood of $\infty.$

The Wronskian of this differential equation will be needed multiple times.  Due to Abel's identity, the Wronskian of any three fundamental solutions of \eqref{eqE:ODE} is (for any $t,s \in \RR$)
\begin{equation}\label{eqE:Wronskian}
\frac{\mathscr{W}(t)}{\mathscr{W}(s)}
=
\frac{(\sigma_j+t)^{3\theta}}
{(\sigma_j+s)^{3\theta}}
e^{-3\gamma_2\sigma_j(t-s)}.
\end{equation}

\paragraph{The neighborhood of infinity.}
The approach we take is to derive a local series solution for large $t$ as seen in \cite[Chapter 5]{coddington1955theory}. We observe that the coefficients in the linear ODE \eqref{eqE:ODE} are analytic in a neighborhood of $\infty$. As such, there exists a \textit{formal} solution to this ODE \citep[Chapter 5, Theorem 2.1]
{coddington1955theory} given by 
\[\widetilde{J}(t) = e^{\lambda t} (\sigma_j+t)^{\rho} P(t) = e^{\lambda t} (\sigma_j+t)^{\rho} \big ( c_0 + \tfrac{c_1}{\sigma_j+1} + \tfrac{c_2}{(\sigma_j+t)^2} + \hdots \big ),\]
where $\lambda, \rho$ are constants and $P(t)$ is an analytic function in a neighborhood of $\infty$. This formal series solution asymptotically agrees with the actual solution \citep[Chapter 5, Theorem 4.1]{coddington1955theory}, and in fact are convergent solutions for all $t \in (0, \infty)$. We now derive the constants $\lambda$ and $\rho$ by simply plugging in our guess for the solution and deriving equations for $\lambda$ and $\rho$. To make this computationally tractable, we will compute derivatives in terms of $\widetilde{J}'/\widetilde{J}$, that is, 
\begin{gather*}
    \tfrac{\widetilde{J}'}{\widetilde{J}} = \lambda + \frac{\rho}{\sigma_j+t} - \frac{c_1}{(\sigma_j+t)^2} + \mathcal{O}(t^{-3}), \quad \left ( \tfrac{\widetilde{J}'}{\widetilde{J}} \right )' = \tfrac{\widetilde{J}''}{\widetilde{J}} - \left ( \tfrac{\widetilde{J}'}{\widetilde{J}} \right )^2 = - \frac{\rho}{(\sigma_j+t)^2} + \mathcal{O}(t^{-3})\\
\text{and} \quad \left ( \tfrac{\widetilde{J}'}{\widetilde{J}} \right )'' = \tfrac{\widetilde{J}'''}{\widetilde{J}} - 3 \tfrac{\widetilde{J}'' \widetilde{J}'}{\widetilde{J}^2} + 2 \left ( \tfrac{\widetilde{J}'}{\widetilde{J}} \right )^3 = \mathcal{O}(t^{-3}).
\end{gather*}
In particular, after some simple computations, we get the following expressions
\begin{align*}
    \tfrac{\widetilde{J}'}{\widetilde{J}} &= \lambda + \frac{\rho}{\sigma_j+t} - \frac{c_1}{(\sigma_j+t)^2} + \mathcal{O}(t^{-3})\\
    \tfrac{\widetilde{J}''}{\widetilde{J}} &= \lambda^2 + \frac{2 \lambda \rho }{\sigma_j+t} + \frac{\rho^2 - 2 \lambda c_1 - \rho}{(\sigma_j+t)^2} + \mathcal{O}(t^{-3})\\
    \tfrac{\widetilde{J}'''}{\widetilde{J}} &= \lambda^3 + \frac{3\lambda^2 \rho}{\sigma_j+t} + \frac{3\lambda\rho^2-3\lambda^2c_1-3 \lambda\rho }{(\sigma_j+t)^2} + \mathcal{O}(t^{-3}).
\end{align*}
Finally we have all the pieces to get the expressions for the coefficients $\lambda$ and $\rho$ by using \eqref{eqE:ODE}
\begin{equation}\begin{gathered}
    0 = \lambda^3 + 3 \gamma_2 \sigma_j \lambda^2 + (4 \gamma_1 + 2 \gamma_2^2 \sigma_j^2) \lambda + 4 \gamma_1 \gamma_2 \sigma_j = 
    (\lambda + \sigma_j \gamma_2) ( \lambda^2 + 2 \sigma_j \gamma_2 \lambda + 4 \gamma_1)\\
    \text{and} \quad  0 = (\sigma+t)^{-1} \big [ 3 \theta \lambda^2 + 4 \gamma_2 \sigma_j^2 \theta \lambda + 4 \gamma_1 \sigma_j^2 \theta-\rho \big ( 3 \lambda^2 + 6 \gamma_2 \sigma_j^2 \lambda + 4 \gamma_1 \sigma_j^2 + 2 \gamma_2^2 \sigma_j^4 \big ) \big ]. 
\end{gathered} \end{equation}
From solving the cubic equation, we get that $\lambda = -\sigma_j \gamma_2, -\sigma_j \gamma_2 \pm \sqrt{\sigma_j^2 \gamma_2^2 - 4  \gamma_1}$. For each $\lambda$, we determine the corresponding $\rho$, that is, 
\begin{align*}
    \lambda &= - \sigma_j \gamma_2 \quad \Rightarrow \quad \rho = \theta\\
    \lambda &= -\sigma_j \gamma_2 \pm \sqrt{\sigma_j^2 \gamma_2^2 - 4 \gamma_1} \quad \Rightarrow \quad \rho = \theta \bigg (1 \mp \frac{\sigma_j \gamma_2}{\sqrt{\sigma_j^2\gamma_2^2 - 4 \gamma_1}} \bigg ).
\end{align*}
As a result, the three fundamental solutions are 
\begin{equation} \begin{aligned} \label{eq:fund_sol_near_infinity}
    j_1(t) &= e^{-\sigma_j \gamma_2 t} (\sigma_j+t)^{\theta} \left (1 + \tfrac{c_1}{\sigma_j+t} + \mathcal{O}(t^{-2})\right )\\
    j_2(t) &= e^{-\sigma_j \gamma_2 t} (\sigma_j+t)^{\theta} \exp \left (\sqrt{\sigma_j^2 \gamma_2^2 - 4  \gamma_1} \big [t - \log(\sigma_j+t) \tfrac{\theta \sigma_j \gamma_2}{\sigma_j^2 \gamma_2^2 - 4  \gamma_1} \big ] \right )\left (1 + \tfrac{c_1}{\sigma_j+t} + \mathcal{O}(t^{-2})\right )\\
    j_3(t) &= e^{-\sigma_j \gamma_2 t} (\sigma_j+t)^{\theta} \exp \left (-\sqrt{\sigma_j^2 \gamma_2^2 - 4  \gamma_1} \big [ t - \log(\sigma_j+t) \tfrac{\theta \sigma_j \gamma_2}{\sigma_j^2 \gamma_2^2 - 4  \gamma_1} \big ] \right )\left (1 + \tfrac{c_1}{\sigma_j+t} + \mathcal{O}(t^{-2})\right ).
\end{aligned}
\end{equation}

\paragraph{The neighborhood of zero.}
We may follow the same approach in a neighborhood of $\sigma_j+t=0,$ where \eqref{eqE:ODE} has a regular singular point.  The solutions are now controlled by the \emph{indicial equation} of the differential equation, which is given by
\begin{equation}\label{eqE:indicial}
\mathfrak{I}(\lambda)
\coloneqq
\lambda(\lambda-1)(\lambda-2)
-
3\theta \lambda(\lambda-1)
+
(5\theta + 2\theta^2) \lambda
-(4\theta+4\theta^2)=0.
\end{equation}
This is polynomial is explicitly factorizable by
\[
\mathfrak{I}(\lambda)
=
(\lambda-2\theta)(\lambda-(1+\theta))(\lambda - 2).
\]
Hence when $\theta$ is not an integer, there are three fundamental solutions
\begin{equation}\label{eqE:near0}
\begin{aligned}
\mathfrak{j}_1(t) &= (\sigma_j + t)^{2\theta}
\bigl(1 
+ \mathfrak{a}_{11} (\sigma_j + t) 
+ \mathfrak{a}_{21} (\sigma_j + t)^2
+ \cdots \bigr)
\eqqcolon 
(\sigma_j + t)^{2\theta}\mathfrak{a}_{1}(t), \\
\mathfrak{j}_2(t) &= (\sigma_j + t)^{1+\theta}
\bigl(1 
+ \mathfrak{a}_{12} (\sigma_j + t) 
+ \mathfrak{a}_{22} (\sigma_j + t)^2
+ \cdots \bigr)
\eqqcolon(\sigma_j + t)^{1+\theta}\mathfrak{a}_{2}(t), \\
\mathfrak{j}_3(t) &= (\sigma_j + t)^{2}
\bigl(1 
+ \mathfrak{a}_{13} (\sigma_j + t) 
+ \mathfrak{a}_{23} (\sigma_j + t)^2
+ \cdots \bigr)
\eqqcolon(\sigma_j + t)^{2}\mathfrak{a}_{3}(t). \\
\end{aligned}
\end{equation}
The coefficients of these recurrences are defined by a recurrence.  For the case of $\mathfrak{a}_1$, this recurrence is given by
\[
\mathfrak{a}_{j1}\mathfrak{I}(2\theta + j)
=
\mathfrak{a}_{(j-1)1}
\bigl\{
-3\gamma_2\sigma_j (2\theta +j-1)(2\theta + j - 2)
\bigr\} + \mathcal{O}(j),
\]
where we take coefficients $\mathfrak{a}_{k1}=0$ for $k$ negative.
The error term also depends on previous coefficients
\(
\{
\mathfrak{a}_{(j-1)1},
\mathfrak{a}_{(j-2)1},
\mathfrak{a}_{(j-3)1}
\},
\)
and the other coefficients in \eqref{eqE:ODE}.  In particular, we may bound this recurrence by
\[
|\mathfrak{a}_{j1}|
\leq
\bigl(
\max_{0 \leq k \leq j-1}
|\mathfrak{a}_{k1}|
\bigr)
\frac
{
3\gamma_2\sigma_j (2\theta +j-1)(2\theta + j - 2)
+ 
M(\gamma_1,\gamma_2,\sigma_j,\theta)(2\theta+j-2)
}
{\mathfrak{I}(2\theta + j)},
\]
where the function $M$ is a continuous function of its parameters on all $\RR^4$.
By induction, we conclude that
\[
|\mathfrak{a}_{j1}|
\leq \frac{\Gamma(2\theta + 1)}{\Gamma(2\theta+1+j)}
(3\gamma_2 \sigma_j)^j(2\theta + j)^{M(\gamma_1,\gamma_2,\sigma_j,\theta)}.
\]
Applying this argument to the other sequences, we conclude that:
\begin{lemma}\label{lemE:a}
There is a continuous function $M\coloneqq M(\gamma_1,\gamma_2,\sigma_j,\theta) \geq 0$ on $\RR^4$ so that
\[
\biggl\|
\begin{bmatrix}
\mathfrak{a}_1(t)
&
\mathfrak{a}_2(t)
&
\mathfrak{a}_3(t) \\
\mathfrak{a}_1'(t)
&
\mathfrak{a}_2'(t)
&
\mathfrak{a}_3'(t) \\
\mathfrak{a}_1''(t)
&
\mathfrak{a}_2''(t)
&
\mathfrak{a}_3''(t) \\
\end{bmatrix}
\biggl\|
\leq (2+\sigma_j + t)^{M}e^{3\gamma_2 \sigma_j t}
\quad\text{for all}\quad t \geq 0.
\]
\end{lemma}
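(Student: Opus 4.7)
The plan is to bound each $\mathfrak{a}_i$ and its first two derivatives by summing the defining power series termwise, using a Gamma-ratio estimate on the coefficients identical in spirit to the one already carried out for $\mathfrak{a}_1$ in the paragraph above the lemma. First I would write out the Frobenius recurrence for each of the three indicial roots $\lambda_i \in \{2\theta, 1+\theta, 2\}$: substituting $(\sigma_j + t)^{\lambda_i}\sum_{j\geq 0}\mathfrak{a}_{ji}(\sigma_j+t)^j$ into $\widetilde{L}[\,\cdot\,]=0$ and collecting powers of $(\sigma_j+t)$ yields
\[
\mathfrak{a}_{ji}\,\mathfrak{I}(\lambda_i+j)
=
-3\gamma_2\sigma_j(\lambda_i+j-1)(\lambda_i+j-2)\,\mathfrak{a}_{(j-1)i}
+ R_{ji},
\]
where $R_{ji}$ is a linear combination of $\mathfrak{a}_{(j-1)i}, \mathfrak{a}_{(j-2)i}, \mathfrak{a}_{(j-3)i}$ with coefficients that are polynomials of degree at most two in $j$ whose norms depend continuously on $(\gamma_1,\gamma_2,\sigma_j,\theta)$. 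Since $\mathfrak{I}(\lambda_i+j)$ is a cubic in $j$ with leading coefficient $1$ and a zero set disjoint from $\lambda_i + \N$ (under the generic assumption that $\theta\notin\Q$ needed for \eqref{eqE:near0}), one has $\mathfrak{I}(\lambda_i+j) \sim j^3$ for large $j$, and the same induction given in the paper yields
\[
|\mathfrak{a}_{ji}|
\leq \frac{\Gamma(\lambda_i+1)}{\Gamma(\lambda_i+1+j)}
(3\gamma_2\sigma_j)^j\,(\lambda_i + j)^{M_0},
\]
with $M_0$ a continuous function of the parameters that absorbs the contribution of the lower-order terms $R_{ji}$.

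Next I would sum the series. Using $\Gamma(\lambda_i+1)/\Gamma(\lambda_i+1+j) \leq C(\theta)/\Gamma(j+1)$ and the elementary bound
\[
\sum_{j\geq 0}\frac{x^j(j+2)^{M_0}}{j!} \leq C_{M_0}(2+x)^{M_0}\,e^x\quad\text{for all } x\geq 0,
\]
applied with $x = 3\gamma_2\sigma_j(\sigma_j+t)$, one gets
\[
|\mathfrak{a}_i(t)| \leq C(\theta)\,(2+\sigma_j+t)^{M_0}\,e^{3\gamma_2\sigma_j(\sigma_j+t)}
\leq (2+\sigma_j+t)^{M}\,e^{3\gamma_2\sigma_j t},
\]
after enlarging $M$ to absorb both $C(\theta)$ and the constant factor $e^{3\gamma_2\sigma_j^2}$. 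The derivatives are handled by differentiating termwise: the extra multiplicative factor $j(j-1)\cdots(j-k+1)(\sigma_j+t)^{-k}$ for $k\in\{1,2\}$ contributes at most $(j+2)^2(\sigma_j+t)^{-2}$, and on $t\geq 0$ the factor $(\sigma_j+t)^{-2}\leq \sigma_j^{-2}$ can again be absorbed into the prefactor by further enlarging $M$ (since $\sigma_j^{-2}\leq (2+\sigma_j+t)^{-2+|\log \sigma_j|}$ times a continuous function of $\sigma_j$). Taking the maximum over $i$ and over derivative order then gives the matrix-norm bound claimed in the lemma.

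The main obstacle will be checking that the continuous function $M$ can be chosen uniformly over all four parameters, including as $\sigma_j\to 0$ and as $\theta$ approaches a rational value where the indicial roots collide. Away from the non-generic set this is straightforward because the minimum distance between $\mathfrak{I}(\lambda_i+\N_{>0})$ and $0$ is continuous and positive, but on approach to coalescence the Gamma prefactor $\Gamma(\lambda_i+1)/\Gamma(\lambda_i+1+j)$ blows up. Handling this correctly is the technical heart of the argument; one resolves it either by excluding a neighborhood of the exceptional locus from the statement, or by enlarging $M$ to a continuous majorant that, although divergent on the exceptional set, remains a continuous function on $\RR^4$. A secondary check is that $R_{ji}$ really contributes only to the polynomial prefactor, not to the exponential rate; this is because each term in $R_{ji}$ is of order $j^2$ (not $j^3$) so its contribution to $|\mathfrak{a}_{ji}|/|\mathfrak{a}_{(j-1)i}|$ decays like $1/j$, harmless after summation.
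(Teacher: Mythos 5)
Your argument follows the paper's proof: set up the Frobenius recurrence for each indicial root, bound the coefficients inductively using the same numerator-over-$\mathfrak{I}$ estimate, and sum the resulting series. The paper is terse at exactly the point you fill in (passing from the coefficient bound to the matrix bound), and your Gamma-ratio plus $\sum_j x^j(j+2)^{M_0}/j!$ estimate is a reasonable way to do it.

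The one step that would actually fail as written is your derivative handling. Differentiating $\sum_j \mathfrak{a}_{ji}(\sigma_j+t)^j$ termwise and writing the $k$-th derivative as the original sum times $j(j-1)\cdots(j-k+1)(\sigma_j+t)^{-k}$ produces the $\sigma_j^{-2}$ factor you then try to absorb via $\sigma_j^{-2}\leq (2+\sigma_j+t)^{-2+|\log\sigma_j|}\cdot c(\sigma_j)$; but the resulting $c(\sigma_j)$ blows up as $\sigma_j\to 0$ (roughly $\sigma_j^{-2+\log 2}$), so the exponent $M$ you produce is \emph{not} continuous at $\sigma_j=0$, contradicting the statement of the lemma. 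The clean fix is not to pull out a negative power of $(\sigma_j+t)$ at all: reindex $\mathfrak{a}_i^{(k)}(t)=\sum_{m\geq 0}\frac{(m+k)!}{m!}\mathfrak{a}_{(m+k)i}(\sigma_j+t)^m$ and use the coefficient bound on $\mathfrak{a}_{(m+k)i}$; the $\Gamma(\lambda_i+1+m+k)$ in the denominator eats the combinatorial prefactor, leaving an extra factor of $(3\gamma_2\sigma_j)^k$ (bounded and continuous) rather than $(\sigma_j+t)^{-k}$. With that change the rest of your proof goes through as you wrote it. Your concluding worry about coalescing indicial roots (integer $\theta$) and degeneration as $\sigma_j\to 0$ is a fair one that the paper itself does not visibly resolve, so flagging rather than resolving it is acceptable here.
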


We will also need some estimates on the fundamental matrix built from these solutions.  Define
\[
\mathfrak{P}(t)
= 
\begin{bmatrix}
\mathfrak{j}_1(t)
&
\mathfrak{j}_2(t)
&
\mathfrak{j}_3(t) \\
\mathfrak{j}_1'(t)
&
\mathfrak{j}_2'(t)
&
\mathfrak{j}_3'(t) \\
\mathfrak{j}_1''(t)
&
\mathfrak{j}_2''(t)
&
\mathfrak{j}_3''(t) \\
\end{bmatrix}
\underset{t\to -\sigma_j}{\sim}
\begin{bmatrix*}[l]
(t+\sigma_j)^{2\theta} & 
(t+\sigma_j)^{1+\theta} & 
(t+\sigma_j)^{2} \\
2\theta(t+\sigma_j)^{2\theta-1} & 
(1+\theta)(t+\sigma_j)^{\theta} & 
2(t+\sigma_j)^{1} \\
(2\theta)(2\theta+1)(t+\sigma_j)^{2\theta-2} & 
(1+\theta)\theta(t+\sigma_j)^{\theta-1} & 
2 \\
\end{bmatrix*}.
\]
In particular the Wronskian of $\mathfrak{P}$ satisfies
\[
\det \mathfrak{P}(t)
\underset{t\to -\sigma_j}{\sim}
(t+\sigma_j)^{3\theta}
\det
\begin{bmatrix*}[l]
1 & 
1 & 
1 \\
2\theta & 
1+\theta & 
2 \\
(2\theta)(2\theta+1) & 
(1+\theta)\theta & 
2 \\
\end{bmatrix*}
\eqqcolon 
p_\theta (t+\sigma_j)^{3\theta}.
\]
We conclude from \eqref{eqE:Wronskian} that for any $t,\epsilon \geq -\sigma_j,$
\begin{equation}\label{eqE:PWr}
\det \mathfrak{P}(t)
=
{\det \mathfrak{P}(\epsilon)}
\frac
{(t+\sigma_j)^{3\theta}}
{(\epsilon+\sigma_j)^{3\theta}}
e^{-3\gamma_2\sigma_j(t-\epsilon)}
\underset{\epsilon\to0}{\longrightarrow}
{(t+\sigma_j)^{3\theta}}
e^{-3\gamma_2\sigma_j(t+\sigma_j)}.
\end{equation}
We conclude that:
\begin{lemma}\label{lemE:P}
There is a continuous function $M\coloneqq M(\gamma_1,\gamma_2,\sigma_j,\theta) \geq 0$ on $\RR^4$ so that for all $t > -\sigma_j$
\[
\| \mathfrak{P}(t)\|
\leq (2+\sigma_j+t)^{M+2\theta}e^{3\gamma_2 \sigma_j t}
\quad\text{and}\quad
\| \mathfrak{P}^{-1}(t)\|
\leq M\frac{(2+\sigma_j+t)^{M+2\theta}e^{9\gamma_2 \sigma_j t}}{(\sigma_j+t)^{2\theta}}.
\]
\end{lemma}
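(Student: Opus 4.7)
The plan is to reduce both estimates to three ingredients already in hand: (i) the Frobenius-series representation $\mathfrak{j}_i(t)=(t+\sigma_j)^{\rho_i}\mathfrak{a}_i(t)$ from \eqref{eqE:near0}, with $(\rho_1,\rho_2,\rho_3)=(2\theta,1+\theta,2)$, (ii) the uniform bound from Lemma~\ref{lemE:a} on $(\mathfrak{a}_i,\mathfrak{a}_i',\mathfrak{a}_i'')$ by $(2+\sigma_j+t)^M e^{3\gamma_2\sigma_j t}$, and (iii) the closed-form Wronskian identity \eqref{eqE:PWr}, which gives $\det\mathfrak{P}(t)=(t+\sigma_j)^{3\theta}e^{-3\gamma_2\sigma_j(t+\sigma_j)}$ (up to a nonzero $\theta$-dependent constant absorbable into $M$).

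For the first estimate, I will expand each entry $\mathfrak{j}_i^{(k)}(t)$ for $k\in\{0,1,2\}$ by the product rule applied to $\mathfrak{j}_i=(t+\sigma_j)^{\rho_i}\mathfrak{a}_i$, obtaining a finite sum of terms of the form $\binom{k}{m}\tfrac{\Gamma(\rho_i+1)}{\Gamma(\rho_i-m+1)}(t+\sigma_j)^{\rho_i-m}\mathfrak{a}_i^{(k-m)}(t)$ for $0\le m\le k$. The factor $(t+\sigma_j)^{\rho_i-m}$ is bounded by $(2+\sigma_j+t)^{\rho_i-m}$ when the exponent is nonnegative, and otherwise by a continuous function of $\sigma_j$ (for $t$ bounded away from $-\sigma_j$); the remaining $\mathfrak{a}_i^{(k-m)}$ factor is controlled by Lemma~\ref{lemE:a}. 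Combining these and enlarging $M$ to absorb the binomial and $\Gamma$-ratios yields $\|\mathfrak{P}(t)\|\le (2+\sigma_j+t)^{M+2\theta}e^{3\gamma_2\sigma_j t}$, the exponent $M+2\theta$ reflecting the worst-case contribution $\rho_1=2\theta$ from the first column.

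For the second estimate, I will write $\mathfrak{P}^{-1}=(\det\mathfrak{P})^{-1}\mathrm{adj}(\mathfrak{P})$. Each entry of $\mathrm{adj}(\mathfrak{P})(t)$ is a signed $2\times 2$ minor, hence a sum of products of two entries of $\mathfrak{P}$, and so bounded by the first estimate squared: $\|\mathrm{adj}(\mathfrak{P})(t)\|\le C(2+\sigma_j+t)^{2M+4\theta}e^{6\gamma_2\sigma_j t}$. Dividing by $\det\mathfrak{P}(t)$ from \eqref{eqE:PWr}, the exponentials reshuffle to $e^{9\gamma_2\sigma_j t}$ with a harmless factor $e^{3\gamma_2\sigma_j^2}$ absorbed into $M$, and the polynomial contribution is $(2+\sigma_j+t)^{2M+4\theta}/(t+\sigma_j)^{3\theta}$. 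To reach the stated form, I then exploit that the leading $(t+\sigma_j)^{-3\theta}$ behavior the naive bound would produce is in fact improved by one power of $(t+\sigma_j)^{\theta}$ because of the same algebraic cancellation that forces $\det\mathfrak{P}$ to be proportional to $(t+\sigma_j)^{3\theta}$ rather than $(t+\sigma_j)^{3\theta-3}$ (only the $2\theta$-column and two of the three rows contribute the maximal singular behavior to any given minor). Accounting for this gain converts the denominator to $(t+\sigma_j)^{2\theta}$ as required.

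The main obstacle is exactly this last step. The naive adjugate bound overestimates by a factor of $(t+\sigma_j)^{\theta}$, and recovering this factor requires expanding each minor one order beyond its naive leading term and using the identities among $\{\rho_i,\rho_i-1,\rho_i-2\}$ that produced the nonzero constant $p_\theta$ in the leading-order computation of $\det\mathfrak{P}$. The bookkeeping must be done uniformly in $\sigma_j$ (so that all constants that appear depend only continuously on $(\gamma_1,\gamma_2,\sigma_j,\theta)$ and can be folded into $M$), and care is needed around the values of $\theta$ at which leading coefficients in \eqref{eqE:indicial} degenerate; away from these resonances the argument is mechanical, and at them one works with an appropriate $\log(t+\sigma_j)$ modification of the Frobenius basis without changing the final polynomial bound.
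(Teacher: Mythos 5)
The first bound is handled correctly and matches the paper's one-line invocation of Lemma~\ref{lemE:a}. For the second bound you take the right route (Cram\'er's rule plus the Wronskian identity \eqref{eqE:PWr}), but the mechanism you propose for recovering the factor $(\sigma_j+t)^{\theta}$---so as to pass from the naive $(\sigma_j+t)^{-3\theta}$ to the claimed $(\sigma_j+t)^{-2\theta}$---does not work, and the ``cancellation'' narrative misdescribes what is going on. There is no cancellation behind $\det\mathfrak P\propto(\sigma_j+t)^{3\theta}$: the diagonal product $\mathfrak{j}_1\mathfrak{j}_2'\mathfrak{j}_3''$ already carries exponent $2\theta+\theta+0=3\theta$, and every term of the Leibniz expansion shares this exponent because each successive row drops one power of $(\sigma_j+t)$; the constant $p_\theta$ governs nondegeneracy, not the exponent. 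Likewise there is no cancellation between the two products inside a $2\times 2$ cofactor: from the Frobenius form \eqref{eqE:near0}, $\mathfrak{j}_a^{(i-1)}\mathfrak{j}_b^{(j-1)}-\mathfrak{j}_b^{(i-1)}\mathfrak{j}_a^{(j-1)}$ factors as $(\sigma_j+t)^{\rho_a+\rho_b-(i+j-2)}$ times a remainder controlled by Lemma~\ref{lemE:a}. What gives the gain is purely combinatorial: among the nine choices of column pair $\{a,b\}$ and row pair $\{i,j\}$, the minimum of $\rho_a+\rho_b-(i+j-2)$ is $\theta$, attained at $a,b,i,j\in\{2,3\}$ (and \emph{not}, as you suggest, at the $2\theta$-column: $\mathfrak{j}_1\sim(\sigma_j+t)^{2\theta}$ is the \emph{least} singular column near $-\sigma_j$), and dividing by $\det\mathfrak P\propto(\sigma_j+t)^{3\theta}$ gives exactly $(\sigma_j+t)^{-2\theta}$. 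Your estimate $\|\mathrm{adj}\,\mathfrak P\|\leq\|\mathfrak P\|^2$ loses precisely because it forgets that the adjugate entries themselves vanish as $t\to-\sigma_j$; the fix is to factor $(\sigma_j+t)^{\rho_a-k}$ explicitly out of each $\mathfrak{j}_a^{(k)}$ before bounding, not to push the series expansion of each minor to a higher order.

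One caveat worth flagging, which your writeup inherits from the paper's own statement: the combinatorial minimum equals $\theta$ only when $\theta\geq 1$. For $\theta<1$ the cofactor with columns $\{1,2\}$ and rows $\{2,3\}$ has exponent $3\theta-2<\theta$, yielding $(\sigma_j+t)^{-2}$, which is more singular than the claimed $(\sigma_j+t)^{-2\theta}$; and indeed the first bound also fails in that case, since $\mathfrak{j}_1''\sim(\sigma_j+t)^{2\theta-2}$ blows up as $t\to-\sigma_j$. The Lemma should therefore be read with an implicit $\theta\geq 1$.
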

\begin{proof}
The first bound follows directly from Lemma \ref{lemE:a}.  The second bound follows from Cram\'er's rule.  We note that some entries of the inverse matrix are singular at $0,$ but all have the form of
\[
\frac{ 
\mathfrak{j}_a^{(i-1)}\mathfrak{j}_b^{(j-1)} 
-\mathfrak{j}_b^{(i-1)}\mathfrak{j}_a^{(j-1)}
}
{\det \mathfrak{P}(t)},
\]
for some $a,b,i,j \in \{1,2,3\}$.  Hence the smallest positive power of $(t+\sigma_j)$ is achieved by taking all $a,b,i,j \in \{2,3\}$.  
\end{proof} 

\paragraph{Improved bounds at the singular point.}

When $\gamma_2^2\sigma_j^2 - 4\gamma_1 \approx 0,$ the solutions constructed near infinity degenerate.  We may however show that the solutions constructed near $0$ in fact have the correct exponential behavior at infinity.  We observe that we may always represent the differential equation \eqref{eqE:ODE} by
\begin{equation}\label{eqE:conjugatedODE}
\begin{aligned}
&(\widetilde{J}e^{\sigma_j \gamma_2 t})^{(3)}
+
\varepsilon_2 (\widetilde{J}e^{\sigma_j \gamma_2 t})^{(2)}
+
\varepsilon_1 (\widetilde{J}e^{\sigma_j \gamma_2 t})^{(1)}
+
\varepsilon_0 (\widetilde{J}e^{\sigma_j \gamma_2 t})
=0, \quad\text{where} \\
&\varepsilon_2 = 
-\frac{3\theta}{\sigma_j+t},
\quad
\varepsilon_1 = 
4\gamma_1-\gamma_2^2\sigma_j^2
+\frac{2\gamma_2\sigma_j\theta}{\sigma_j+t}
+\frac{2\theta^2+5\theta}{(\sigma_j+t)^2},
\quad\text{and}\\
&\varepsilon_0 = 
-\frac{(4\gamma_1 - \gamma_2^2\sigma_j^2)\theta}{\sigma_j+t}
+\frac{(2\theta^2-\theta)\gamma_2\sigma_j}{(\sigma_j+t)^2}
-\frac{4(\theta^2+\theta)\gamma_2\sigma_j}{(\sigma_j+t)^3}.
\end{aligned}
\end{equation}
Hence with $Y \coloneqq \widetilde{J}e^{\sigma_j \gamma_2 t},$ we can represent
\[
\begin{bmatrix}
Y^{(1)}(t) \\
Y^{(2)}(t) \\
Y^{(3)}(t) \\
\end{bmatrix}
=
\begin{bmatrix}
0 & 1 & 0 \\
0 & 0 & 1 \\
-\varepsilon_0(t) & -\varepsilon_1(t) & -\varepsilon_2(t) \\
\end{bmatrix}
\begin{bmatrix}
Y(t) \\
Y^{(1)}(t) \\
Y^{(2)}(t) \\
\end{bmatrix}
\]
Hence if we let 
\[
\delta^2 \coloneqq \max\biggl\{
{|4\gamma_1-\gamma_2^2\sigma_j^2|},
(\omega_j+t)^{-1}\biggr\}
\]
we conclude, after conjugating
\begin{equation}\label{eqE:N}
\begin{bmatrix}
Y^{(1)}(t) \\
\delta^{-1}Y^{(2)}(t) \\
\delta^{-2}Y^{(3)}(t) \\
\end{bmatrix}
=
\begin{bmatrix}
0 & \delta & 0 \\
0 & 0 & \delta \\
-\varepsilon_0(t)\delta^{-2} & -\varepsilon_1(t)\delta^{-1} & -\varepsilon_2(t) \\
\end{bmatrix}
\begin{bmatrix}
Y(t) \\
\delta^{-1}Y^{(1)}(t) \\
\delta^{-2}Y^{(2)}(t) \\
\end{bmatrix}.
\end{equation}
We define
\[
N\coloneqq\max\{
|Y|
,|\tfrac{Y'}{\delta}|
,|\tfrac{Y''}{\delta^2}|
\}
\quad\text{and}
\quad
A\coloneqq
\max\{ \delta, 
|\varepsilon_2| 
+ |\varepsilon_1\delta^{-1}|
+ |\varepsilon_0\delta^{-2}|\}
\]
which are $\ell^{\infty}$ norms of the matrix and vector that appear on the right-hand-side of \eqref{eqE:N}.  Moreover, taking the time derivative of $N$ we conclude the differential inequality
\[
N'(t) \leq A(t)N(t) + \mathbf{1}[\delta^2 = (\omega_j+t)^{-1}]\frac{N(t)}{\omega_j+t}.
\]
There is a continuous function $M$ of the parameters $(\gamma_1,\gamma_2,\sigma_j,\theta)$ so that $A(t)$ can be bounded by a multiple of $M \delta(t)$ for all $t \geq 1.$
Applying Gronwall's inequality, it follows that for any $t \geq t_0 \geq 1$ that 
\begin{equation}\label{eqE:gronwall}
N(t)
\leq
N(t_0)
\exp
\biggl(
M
\int_{t_0}^t
\delta(s)\dif s
\biggr).
\end{equation}
We use this to conclude the fundamental matrix $\mathfrak{P}$ has reasonable decay properties for $4\gamma_1-\gamma_2^2\sigma_j^2$ small.
\begin{lemma}\label{lemE:singularpoint}
Let $\omega \coloneqq 4\gamma_1-\gamma_2^2\sigma_j^2$ and suppose that $|\omega| \leq 1.$
There is a continuous function $M \coloneqq M(\gamma_1,\gamma_2,\sigma_j,\theta) \geq 0$ on $\RR^4$ so that for all $t \geq s \geq 1$ 
\[
\|\mathfrak{P}(t)\| \leq M e^{-(\gamma_2\sigma_j - M\sqrt{|\omega|})t}
\quad\text{and}\quad
\|\mathfrak{P}(t)\mathfrak{P}^{-1}(s)\| \leq M e^{-(\gamma_2\sigma_j -M\sqrt{|\omega|})(t-s)}
\]
\end{lemma}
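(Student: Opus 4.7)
The plan is to convert the Gronwall bound \eqref{eqE:gronwall} on the rescaled scalar norm $N(t) = \max\{|Y|, |\delta^{-1}Y'|, |\delta^{-2}Y''|\}$ into a matrix bound on $\mathfrak{P}(t)$ and its propagator by applying it column-by-column. First I would verify the tacit claim preceding \eqref{eqE:gronwall} that the matrix $A(t)$ in \eqref{eqE:N} satisfies $A(t) \le M\delta(t)$: expand each coefficient $\varepsilon_i$ in \eqref{eqE:conjugatedODE} into a piece of size $|\omega|$ plus pieces decaying in powers of $(\sigma_j+t)^{-1}$, then invoke $\delta^2 \ge \max\{|\omega|, (\sigma_j+t)^{-1}\}$ together with $|\omega| \le 1$ and $t \ge 1$ to bound every summand in $|\varepsilon_2| + |\varepsilon_1|\delta^{-1} + |\varepsilon_0|\delta^{-2}$ by $M\delta(t)$ for a continuous function $M = M(\gamma_1,\gamma_2,\sigma_j,\theta)$.

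Next I would bound the integral appearing in \eqref{eqE:gronwall} by
\[
\int_s^t \delta(u)\, du \le \sqrt{|\omega|}\,(t-s) + 2\bigl(\sqrt{\sigma_j+t} - \sqrt{\sigma_j+s}\bigr).
\]
The subexponential correction $2M\sqrt{\sigma_j+t}$ is absorbed through the elementary inequality $2M\sqrt{\sigma_j+t} \le \eta t + C(\eta,M,\sigma_j)$, valid for every $\eta > 0$ with $C$ continuous in its arguments; enlarging $M$ and inflating its coefficient in front of $\sqrt{|\omega|}$ converts the total exponent into $-(\sigma_j\gamma_2 - M\sqrt{|\omega|})t$ up to a parameter-dependent multiplicative constant. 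Applying this estimate to each column of $\mathfrak{P}$ (i.e.\ to each near-$0$ fundamental solution $\mathfrak{j}_i$ of \eqref{eqE:ODE}) with base point $t_0 = 1$, and using Lemma~\ref{lemE:P} to control the initial data $N_i(1) \le \|\mathfrak{P}(1)\|$, yields the first bound after restoring the $e^{-\sigma_j\gamma_2 t}$ factor from the conjugation $Y = \widetilde{J}e^{\sigma_j\gamma_2 t}$.

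For the propagator bound, I would view $\mathfrak{P}(t)\mathfrak{P}^{-1}(s)$ as the time-$s$-to-time-$t$ evolution of the first-order system equivalent to \eqref{eqE:ODE} and rerun the Gronwall argument with initial time $s$ in place of $1$. For $s \ge 1$ the subexponential correction now satisfies $2\bigl(\sqrt{\sigma_j+t} - \sqrt{\sigma_j+s}\bigr) \le (t-s)/\sqrt{\sigma_j+s} \le t-s$, so the same absorption trick gives the analogous bound with $t-s$ replacing $t$ in the exponent.

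The principal obstacle is the bookkeeping between the Gronwall norm $N$ (which rescales the $k$-th derivative by $\delta^{-k}$) and the matrix norm of $\mathfrak{P}$ (whose entries are unrescaled derivatives of $\widetilde{J}$). Since $\delta \le 1$ throughout the regime $|\omega| \le 1$, $t \ge 1$, powers of $\delta^{-1}$ only inflate bounds by a parameter-dependent amount, but these inflations must remain continuous in $(\gamma_1,\gamma_2,\sigma_j,\theta)$ uniformly through the turning point $\omega = 0$, where the explicit asymptotic representation \eqref{eq:fund_sol_near_infinity} degenerates. Handling this turning point uniformly is precisely the benefit of the Gronwall route over the WKB/Whittaker representation flagged in Remark~\ref{rk:whittaker}.
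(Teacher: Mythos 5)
Your plan coincides with the paper's own proof: apply the Gronwall estimate \eqref{eqE:gronwall} columnwise to the fundamental solutions $\mathfrak{j}_1,\mathfrak{j}_2,\mathfrak{j}_3$, seed the initial data at $t=1$ via Lemma~\ref{lemE:P} (resp.\ at $s$ with identity data for the propagator $\mathfrak{P}(t)\mathfrak{P}^{-1}(s)$), integrate back from $Y''$ to $Y$, and undo the conjugation $Y=\widetilde{J}e^{\sigma_j\gamma_2 t}$. The absorption of the subexponential piece $\int_1^t(\sigma_j+u)^{-1/2}\,\dif u\sim 2\sqrt{\sigma_j+t}$ that you spell out is precisely what the paper compresses into ``increasing $M$ as needed''; one small caveat, shared with the paper, is that exactly at the turning point $\omega=0$ your $\eta t$ absorption term cannot literally be dominated by $M\sqrt{|\omega|}\,t$, so the stated bound is really $e^{-(\gamma_2\sigma_j - M\sqrt{|\omega|}-\eta)t}$ for arbitrarily small $\eta$ rather than with $\eta=0$.
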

\noindent Note that the first inequality extends to all $t \geq -\sigma_j$ using Lemma \ref{lemE:P}.
\begin{proof}
For the first bound, we apply \eqref{eqE:gronwall} to each of $\mathfrak{j}_1,\mathfrak{j}_2,\mathfrak{j}_3$
separately. As $|\omega|=\delta^2 \leq 1$ and $Y(1),Y'(1),Y''(1)$ can all be bounded using Lemma \ref{lemE:P} by a continuous function, we conclude for some possibly larger $M > 0$
\[
Y''(t) \leq Me^{M t\delta}.
\]
By integrating this bound, we conclude, by increasing $M$ as needed that there is some $M$ so that
\[
\max\{
|Y(t)|
,|{Y'(t)}|
,|{Y''(t)}|
\}
\leq Me^{M t\delta}.
\]
With $Y(t)=\mathfrak{j}_a(t)e^{\sigma_j \gamma_2 t},$ for $a \in \{1,2,3\}$ expressing the left-hand-side of the above in terms of $\mathfrak{j}_a$ and again increasing $M$ as needed, we conclude the first claimed bound.

For the second bound, the columns of $\mathfrak{P}(t)\mathfrak{P}^{-1}(s)$ solve \eqref{eqE:ODE} and they have identity initial conditions at $s$.  Hence applying \eqref{eqE:gronwall} to each, we derive the desired equation in the same fashion as above.
\end{proof}

\subsection{Near infinity}
Throughout this section, we work in the regime that 
\[|4 \gamma_1 - \gamma_2^2 \sigma_j^2| > \varepsilon\]
for some positive $\varepsilon$. This regime ensures that all the roots of the indicial equation for the ODE \eqref{eqE:ODE} are distinct near $\infty$. We are interested in deriving an expression for the kernel $K_{s}(t)$ in Corollary~\ref{cor:kernel} when $s$ and $t$ are large. Recall the three fundamental solutions near infinity that is $j_i(t)$ \eqref{eq:fund_sol_near_infinity}. We begin by defining three different boundary conditions that will aid us in finding the kernels we are interest in, that is, 
\begin{equation}\label{eqE:fuu} \begin{aligned}
\text{(Dirichlet sol., $\widetilde{\mathcal{D}}_s(t)$)} \quad \widetilde{L}[\widetilde{\mathcal{D}}_s(t)] = 0 \quad \text{where} \quad  \widetilde{\mathcal{D}}_s(s) = (1,0,0)^T\\
\text{(Neumann sol., $\widetilde{\mathcal{N}}_s(t)$)} \quad \widetilde{L}[\widetilde{\mathcal{N}}_s(t)] = 0 \quad \text{where} \quad  \widetilde{\mathcal{N}}_s(s) = (0,1,0)^T\\
\text{(2nd derivative sol., $\widetilde{\mathcal{H}}_s(t)$)} \quad \widetilde{L}[\widetilde{\mathcal{H}}_s(t)] = 0 \quad \text{where} \quad  \widetilde{\mathcal{H}}_s(s) = (0,0,1)^T
\end{aligned}
\end{equation}
We will compute the asymptotics for the Dirichlet solution $\widetilde{\mathcal{D}}_s(t)$ in full details; we leave out the details for the other two solutions noting that the same approach works. 

Using the fundamental solutions near infinity, we can write the Dirichlet solution as a linear combination of the fundamental solutions, 
\[\widetilde{\mathcal{D}}_s(t) = c_1^D(s) j_1(t) + c_2^D(s) j_2(t) + c_3^D(s) j_3(t). \]
We need to find the coefficients $c_1, c_2, c_3$ and to do so we utilize the fundamental matrix $\Phi(s)$,
\begin{align}
    \Phi(s) \defas \begin{bmatrix}
    j_1(s) & j_2(s) & j_3(s)\\
    j_1'(s) & j_2'(s) & j_3'(s)\\
    j_1''(s) & j_2''(s) & j_3''(s).
    \end{bmatrix}
\end{align}
In particular the coefficients $c_i^D$ are found by $\Phi(s) (c_1^D, c_2^D, c_3^D)^T = (1,0,0)^T$. Hence, we need to compute the inverse of $\Phi(s)$ which we do by Cramer's rule. First, we need an expression for the Wroskian $\mathscr{W}(s)$ which we wrote in \eqref{eqE:Wronskian} as a ratio. Since we are working in a neighborhood of $t = \infty$, we can compute the Wroskian using $t = \infty$ and therefore derive an expression for the $\mathscr{W}(s)$ for any $s$. A simple calculation yields the following expression
\begin{align*}
    \mathscr{W}(t) = \text{det} \,  \Phi(t) &\sim \exp((\lambda_1 + \lambda_2 + \lambda_3)t) (\sigma_j + t)^{\rho_1 + \rho_2 + \rho_3} \, \text{det} \left ( \begin{bmatrix}
    1 & 1 & 1\\
    \lambda_1 & \lambda_2 & \lambda_3\\
    \lambda_1^2 & \lambda_2^2 & \lambda_3^2
    \end{bmatrix} \right )\\
    &= \exp((\lambda_1 + \lambda_2 + \lambda_3)t) (\sigma_j + t)^{\rho_1 + \rho_2 + \rho_3} (\lambda_3-\lambda_2) (\lambda_3-\lambda_1) (\lambda_2-\lambda_1)
\end{align*}
where the pair $(\lambda_i, \rho_i)$ corresponds to the fundamental solution $j_i$ and the determinant of the matrix is the determinant of the Vandermonde matrix. It is clear that $\lambda_1 + \lambda_2 + \lambda_3 = -3 \sigma_j \gamma_2$, $\rho_1 + \rho_2 + \rho_3 = 3 \theta$, and $(\lambda_3-\lambda_2)(\lambda_3-\lambda_1)(\lambda_2-\lambda_1) = 2 (\sigma_j^2 \gamma_2^2 - 4 \gamma_1)^{3/2}$. Hence, we have that $\mathscr(t) \sim 2\exp(-3\sigma_j^2 \gamma_1 t) (\sigma_j + t)^{3 \theta} (\sigma_j^2 \gamma_2^2-4 \gamma_1)^{3/2} $. Combining this with \eqref{eqE:Wronskian}, we get that
\begin{align}\label{eqE:Ws}
    \frac{1}{\mathscr{W}(s)} = \frac{e^{3 \gamma_2 \sigma_j s} (\sigma_j + s)^{-3 \theta}}{2(\sigma_j^2 \gamma_2^2 - 4 \gamma_1)^{3/2}}.
\end{align}
Here we used that we working in the regime where the denominator is bounded away from $0$. By Cramer's rule, we have an expression for the fundamental matrix $\Phi^{-1}(s)$
\begin{align*}
    \Phi^{-1}(s) &= \frac{1}{\mathscr{W}(s)} \begin{bmatrix}
    j_2'j_3''-j_2''j_3' & j_2''j_3-j_2j_3'' & j_2j_3'-j_2' j_3\\
    j_1''j_3'-j_1' j_3'' & j_1j_3''-j_1''j_3 & j_1'j_3-j_1j_3'\\
    j_1'j_2''-j_2'j_1'' & j_2j_1''-j_1j_2'' & j_1j_2' - j_1'j_2
    \end{bmatrix} \\
    &\sim \frac{1}{2(\sigma_j^2 \gamma_2 - 4 \gamma_1)^{3/2}} \begin{bmatrix} (\lambda_2 \lambda_3^2-\lambda_2^2 \lambda_3)j_1^{-1} & (\lambda_2^2 -\lambda_3^2)j_1^{-1} &(\lambda_3-\lambda_2) j_1^{-1}\\
    (\lambda_3\lambda_1^2-\lambda_3^2\lambda_1)j_2^{-1} & (\lambda_3^2-\lambda_1^2) j_2^{-1} & (\lambda_1-\lambda_3)j_2^{-1}\\
    (\lambda_1\lambda_2^2-\lambda_1^2 \lambda_2 )j_3^{-1} & (\lambda_1^2-\lambda_2^2) j_3^{-1} & (\lambda_2-\lambda_1) j_3^{-1}
    \end{bmatrix}\\
    &= \frac{1}{2(\sigma_j^2 \gamma_2 - 4 \gamma_1)^{3/2}}
    \begin{bmatrix}
    \lambda_2 \lambda_3 (\lambda_3-\lambda_2) j_1^{-1} & (\lambda_2-\lambda_3)(\lambda_2+\lambda_3) j_1^{-1} & (\lambda_3-\lambda_2) j_1^{-1}\\
    \lambda_3\lambda_1 (\lambda_1-\lambda_3) j_2^{-1} & (\lambda_3-\lambda_1) (\lambda_3+\lambda_1) j_2^{-1} & (\lambda_1-\lambda_3) j_2^{-1}\\
    \lambda_1\lambda_2 (\lambda_2-\lambda_1)j_3^{-1} & (\lambda_1-\lambda_2)(\lambda_1+\lambda_2) j_3^{-1} &(\lambda_2-\lambda_1) j_3^{-1}
    \end{bmatrix}.
\end{align*}
We used that $j_i^{(\ell)}(s) \sim \lambda^{\ell}_i j_i(s)$ and we simplified the Wronskian by pulling out the appropriate terms. To simplify some of the terms, we let $\omega(\sigma) = 4 \gamma_1 - \gamma_2^2 \sigma^2$. We can now compute the coefficients for the various boundary solutions using that $2 (\sigma_j^2 \gamma_2^2 - 4 \gamma_1)^{3/2} = (\lambda_3-\lambda_2)(\lambda_3-\lambda_1)(\lambda_2-\lambda_1)$
\begin{align*}
    (c_1^D(s), c_2^D(s), c_3^D(s) )^T &= \left ( \frac{4\gamma_1}{\omega} j_1^{-1}, \frac{\sigma_j \gamma_2 ( \sigma_j \gamma_2 + \sqrt{-\omega})}{-2\omega} j_2^{-1}, \frac{\sigma_j \gamma_2 (\sigma_j \gamma_2 - \sqrt{-\omega})}{-2 \omega} j_3^{-1} \right )^T (1 + \mathcal{O}(\varepsilon) )\\
    (c_1^N(s), c_2^N(s), c_3^N(s) )^T &= \left (\frac{2\sigma_j \gamma_2}{\omega} j_1^{-1}, \frac{2\sigma_j \gamma_2 + \sqrt{-\omega}}{-2\omega} j_2^{-1}, \frac{2\sigma_j \gamma_2 - \sqrt{-\omega}}{-2\omega} j_3^{-1} \right )^T (1 + \mathcal{O}(\varepsilon) )\\
     (c_1^H(s), c_2^H(s), c_3^H(s) )^T &= \left ( \frac{j_1^{-1}}{\omega}, \frac{-j_2^{-1}}{2\omega}, \frac{-j_3^{-1}}{2\omega} \right )^T (1 + \mathcal{O}(\varepsilon) ).
\end{align*}

We conclude the following bounds for the fundamental matrix:
\begin{lemma}\label{lemE:fine}
Let $\varepsilon > 0$ be arbitrary and suppose that $\omega = 4 \gamma_1 - \gamma_2^2 \sigma_j^2$ satisfies $|\omega| > \varepsilon$.  There is a continuous function $M_\epsilon \coloneqq M_\epsilon(\gamma_1,\gamma_2,\theta)$ so that for all $t \geq s \geq 1,$
\[
\|\Phi(t)\Phi^{-1}(s)\|
\leq M_\epsilon e^{-(\gamma_j\gamma_2-\sqrt{\max\{-\omega,0\}})(t-s)}\frac{ (\sigma_j+t)^{\theta}}{(\sigma_j+s)^{\theta}}.
\]
If on the other hand, $s \leq 1,$ we instead have
\[
\|\Phi(t)\Phi^{-1}(s)\|
\leq M_\epsilon e^{-(\gamma_j\gamma_2-\sqrt{\max\{-\omega,0\}})(t-s)}\frac{ (\sigma_j+t)^{\theta}}{(\sigma_j+s)^{2\theta}}.
\]
Finally, we have the asymptotic representation for the fundamental solutions for $t \geq s$
\[
\begin{aligned}
\widetilde{\mathcal{H}}_s(t) 
&= 
\biggl(\tfrac{1-\cos\bigl(\sqrt{\omega}(t-s)+\log\bigl(\tfrac{\sigma_j+t}{\sigma_j+s}\bigr)\tfrac{2\gamma_2\sigma_j\theta}{\sqrt{\omega}}\bigr)+o_{s,\varepsilon}(1)}{{\omega}}\biggr)\exp\bigl(-\sigma_j\gamma_2 (t-s)\bigr)\frac{ (\sigma_j+t)^{\theta}}{(\sigma_j+s)^{\theta}}, \\
\widetilde{\mathcal{N}}_s(t) &= 2\sigma \gamma_2 \widetilde{\mathcal{H}}_s(t) + 
\biggl(\tfrac{\sin\bigl(\sqrt{\omega}(t-s)+\log\bigl(\tfrac{\sigma_j+t}{\sigma_j+s}\bigr)\tfrac{2\gamma_2\sigma_j\theta}{\sqrt{\omega}}\bigr)
+o_{s,\varepsilon}(1)}{\sqrt{\omega}}\biggr)\exp\bigl(-\sigma_j\gamma_2 (t-s)\bigr)\frac{ (\sigma_j+t)^{\theta}}{(\sigma_j+s)^{\theta}}, \\
\widetilde{\mathcal{D}}_s(t) &= \sigma_j\gamma_2 \widetilde{\mathcal{N}}_s(t) 
-(\sigma_j\gamma_2)^2 \widetilde{\mathcal{H}}_s(t)
+(1+o_{s,\varepsilon}(1))\exp\bigl(-\sigma_j\gamma_2 (t-s)\bigr)\frac{ (\sigma_j+t)^{\theta}}{(\sigma_j+s)^{\theta}}, \\
\end{aligned}
\]
where the error $o_{s,\varepsilon}(1)$ tends to $0$ uniformly with $s$ uniformly on compact sets of the parameter space where $|\omega|>\varepsilon.$
\end{lemma}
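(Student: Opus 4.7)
The plan is to combine the explicit coefficient formulas $c_i^{D}(s),c_i^{N}(s),c_i^{H}(s)$ derived above with the near-infinity asymptotic expansions of the fundamental triple $\{j_1,j_2,j_3\}$ from \eqref{eq:fund_sol_near_infinity}, read off the pointwise representations of the three boundary solutions $\widetilde{\mathcal{D}}_s,\widetilde{\mathcal{N}}_s,\widetilde{\mathcal{H}}_s$, and then bound the matrix norm $\|\Phi(t)\Phi^{-1}(s)\|$ column by column.

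For the first (``$s\geq 1$'') claim, the idea is to substitute the coefficients into $\widetilde{\mathcal{H}}_s(t) = \sum_{i=1}^{3} c_i^{H}(s)\, j_i(t)$ and use $j_i(t)\sim e^{\lambda_i t}(\sigma_j+t)^{\theta}(1+\mathcal{O}((\sigma_j+t)^{-1}))$. Writing $\omega = 4\gamma_1 - \gamma_2^2\sigma_j^2$, the root $\sqrt{\sigma_j^2\gamma_2^2-4\gamma_1}$ is purely imaginary ($=i\sqrt{\omega}$) when $\omega>0$ and equals $\sqrt{-\omega}\in\mathbb{R}$ when $\omega<0$. In the first case $(j_2,j_3)$ are complex conjugates and the combination $c_2^{H} j_2 + c_3^{H} j_3$ collapses algebraically to a $\cos$ term multiplied by the common prefactor $(\sigma_j+t)^{\theta}(\sigma_j+s)^{-\theta} e^{-\sigma_j\gamma_2(t-s)}$, with the stated phase $\sqrt{\omega}(t-s)+\log(\tfrac{\sigma_j+t}{\sigma_j+s})\tfrac{2\gamma_2\sigma_j}{\sqrt{\omega}}$; in the second case $(j_2,j_3)$ are real exponentials and the same formula holds by analytic continuation in $\omega$. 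The parallel computations for $\widetilde{\mathcal{N}}_s$ and $\widetilde{\mathcal{D}}_s$ then fall out, with the extra summands $2\sigma_j\gamma_2\widetilde{\mathcal{H}}_s$ and $\sigma_j\gamma_2\widetilde{\mathcal{N}}_s - (\sigma_j\gamma_2)^2\widetilde{\mathcal{H}}_s$ arising from elementary identities among the $c_i^{\bullet}(s)$. The error $o_{s,\varepsilon}(1)$ absorbs the $\mathcal{O}((\sigma_j+t)^{-1}+(\sigma_j+s)^{-1})$ series tails, which by \citep{coddington1955theory} converge on $(0,\infty)$ and are uniform on $|\omega|>\varepsilon$ by a coefficient-recursion bound analogous to Lemma~\ref{lemE:a}.

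For the norm bound with $s \geq 1$, the columns of $\Phi(t)\Phi^{-1}(s)$ are precisely $\widetilde{\mathcal{D}}_s(t),\widetilde{\mathcal{N}}_s(t),\widetilde{\mathcal{H}}_s(t)$ and their first and second $t$-derivatives. All three representations carry the common factor $e^{-\sigma_j\gamma_2(t-s)}(\sigma_j+t)^{\theta}/(\sigma_j+s)^{\theta}$; the trigonometric factor is $\mathcal{O}(1)$ when $\omega>\varepsilon$ and is dominated by $e^{\sqrt{-\omega}(t-s)}$ when $\omega<-\varepsilon$, which accounts exactly for the $\sqrt{\max\{-\omega,0\}}$ loss in the stated exponential rate. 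The singular $1/\omega$ and $1/\sqrt{\omega}$ prefactors are bounded by $1/\varepsilon$ on $|\omega|>\varepsilon$ and absorbed into $M_\varepsilon$; differentiating in $t$ only pulls down $\lambda_i$ or $\theta/(\sigma_j+t)$, both polynomial in the parameters.

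For $s\leq 1\leq t$, the plan is to factor $\Phi(t)\Phi^{-1}(s) = [\Phi(t)\Phi^{-1}(1)]\,[\mathfrak{P}(1)\mathfrak{P}^{-1}(s)]$, legitimate because any two fundamental matrices of the same linear ODE differ by a constant right-multiplication, making the product basis-invariant. The first bracket is the $s=1$ case of the bound proved above. For the second, Lemma~\ref{lemE:P} gives $\|\mathfrak{P}(1)\|\leq M_\varepsilon$ and $\|\mathfrak{P}^{-1}(s)\|\leq M_\varepsilon(\sigma_j+s)^{-2\theta}$ (the smallest indicial exponent near zero being $2\theta$), which supplies the extra factor of $(\sigma_j+s)^{-\theta}$ in the denominator of the second bound. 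The principal obstacle throughout is the uniform control of the series-tail error $o_{s,\varepsilon}(1)$ across the region $|\omega|>\varepsilon$: the algebraic cancellations that turn complex exponentials into the real $\cos,\sin$ expressions are delicate when $\omega$ is close to $\pm\varepsilon$, so one must establish a quantitative tail estimate on the formal-series coefficients at infinity, mirroring Lemma~\ref{lemE:a} with $(\sigma_j+t)^{-1}$ in the role of $(\sigma_j+t)$, before the $o_{s,\varepsilon}(1)$ qualifier can be justified uniformly in the parameters.
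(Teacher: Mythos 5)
Your proposal follows essentially the same route as the paper: express $\widetilde{\mathcal{D}}_s,\widetilde{\mathcal{N}}_s,\widetilde{\mathcal{H}}_s$ via the explicit coefficients $c_i^{\bullet}(s)$ and the near-infinity fundamental solutions \eqref{eq:fund_sol_near_infinity}, bound $\Phi(t)\Phi^{-1}(s)$ column by column using the Wronskian formula, and for $s\leq 1$ factor $\Phi(t)\Phi^{-1}(s)=\Phi(t)\Phi^{-1}(1)\,\mathfrak{P}(1)\mathfrak{P}^{-1}(s)$ and invoke Lemma~\ref{lemE:P}. The only minor difference is that you flag the uniformity of the $o_{s,\varepsilon}(1)$ series-tail estimate as a point requiring a quantitative recursion bound; the paper's (very terse) proof invokes the Coddington--Levinson theory and the displayed coefficients without elaborating on this uniformity, so your observation is a reasonable refinement but not a divergence in approach.
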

\begin{proof}
The bounds follow from estimating above the fundamental solutions \eqref{eq:fund_sol_near_infinity} and the Wronskian formula \eqref{eqE:Ws}.  By combining these with Lemma \ref{lemE:P}, we can extend the formula to $s \leq 1,$ (using
\[
\Phi(t)\Phi^{-1}(s)
=
\Phi(t)\Phi^{-1}(1)\Phi(1)\Phi^{-1}(s)
=
\Phi(t)\Phi^{-1}(1)\mathfrak{P}(1)\mathfrak{P}^{-1}(s),
\]
using uniqueness of the IVP).  The final asymptotics follow from the display above the statement of the lemma.
\end{proof}

\paragraph{The scaling limit of the fundamental solutions as $\sigma$ tends to $0$.}

We conclude with:
\begin{lemma}\label{lemE:scaling}
Let $\mathfrak{d}$ be the solution of 
\eqref{eqE:ODE} on $[0,\infty)$ with $\sigma=0$ that has the property that $\mathfrak{d}(t)t^{-2\theta} \to 1$ as $t \to 0$.  This could also be expressed as the limit as $\sigma\to 0$ of $\mathfrak{j}_1$.
For any $\varepsilon >0$
\[
\begin{aligned}
&\sup_{t \in [\varepsilon,\infty)} 
(\sigma_j + t)^{-\theta}
|\sigma^{2\theta}\cdot\widetilde{\mathcal{D}}_0(t) - \mathfrak{d}(t)|
\underset{\sigma \to 0}{\longrightarrow} 0, \\
&\sup_{t \in [\varepsilon,\infty)} 
(\sigma_j + t)^{-\theta}
|\sigma^{2\theta-1}\cdot\widetilde{\mathcal{N}}_0(t) - \mathfrak{d}(t)|
\underset{\sigma \to 0}{\longrightarrow} 0, \\
&\sup_{t \in [\varepsilon,\infty)} 
(\sigma_j + t)^{-\theta}
|\sigma^{2\theta-2}\cdot\widetilde{\mathcal{H}}_0(t) - \mathfrak{d}(t)|
\underset{\sigma \to 0}{\longrightarrow} 0, \\
\end{aligned}
\]
\end{lemma}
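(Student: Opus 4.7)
The plan is to expand each of $\widetilde{\mathcal{D}}_0,\widetilde{\mathcal{N}}_0,\widetilde{\mathcal{H}}_0$ in the basis of near-zero fundamental solutions $\{\mathfrak{j}_1,\mathfrak{j}_2,\mathfrak{j}_3\}$ from \eqref{eqE:near0} and show that after scaling by the prescribed power of $\sigma_j$ only the most singular mode $\mathfrak{j}_1$ survives in the $\sigma_j\to 0$ limit. Since $\mathfrak{j}_1(t;\sigma_j)\to\mathfrak{d}(t)$ locally uniformly, by continuity in $\sigma_j$ of the recurrence that defines the $\mathfrak{a}_{ji}$ together with the uniform bound of Lemma \ref{lemE:a}, this already establishes the desired convergence on any compact $[\varepsilon,R]$. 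A stability argument based on Lemma \ref{lemE:fine} then propagates it to $[\varepsilon,\infty)$.

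First I would compute the coefficient vectors $(c_1^{\bullet},c_2^{\bullet},c_3^{\bullet})^T=\mathfrak{P}^{-1}(0)\mathbf{v}^{\bullet}$ with $\mathbf{v}^D=(1,0,0)^T$, $\mathbf{v}^N=(0,1,0)^T$, $\mathbf{v}^H=(0,0,1)^T$. Using the explicit leading form of $\mathfrak{P}(0)$ displayed just before Lemma \ref{lemE:P}, together with the Wronskian identity $\det\mathfrak{P}(0)\sim p_{\theta}\sigma_j^{3\theta}$ extracted from \eqref{eqE:PWr}, Cramer's rule yields
\[
c_1^D\sim\alpha_D\,\sigma_j^{-2\theta},\qquad c_1^N\sim\alpha_N\,\sigma_j^{1-2\theta},\qquad c_1^H\sim\alpha_H\,\sigma_j^{2-2\theta},
\]
for $\theta$-dependent constants (absorbed into the normalization of $\mathfrak{d}$), while the companion coefficients satisfy $\sigma_j^{2\theta-k}\bigl|c_2^{\bullet}\mathfrak{j}_2(t)+c_3^{\bullet}\mathfrak{j}_3(t)\bigr|=o(1)$ uniformly on compact $t$-intervals, because $\mathfrak{j}_2,\mathfrak{j}_3$ carry the lower-order base powers $(\sigma_j+t)^{1+\theta}$ and $(\sigma_j+t)^2$ at the origin and their Cramer coefficients are of correspondingly smaller size. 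Combined with $\mathfrak{j}_1(t;\sigma_j)\to\mathfrak{d}(t)$, this proves the three limits uniformly on every bounded $[\varepsilon,R]$.

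To extend to $[\varepsilon,\infty)$, set $E_k(t)\defas\sigma_j^{2\theta-k}\widetilde{\mathcal{U}}_0(t)-\mathfrak{d}(t)$ for $\mathcal{U}\in\{\mathcal{D},\mathcal{N},\mathcal{H}\}$ with the corresponding $k\in\{0,1,2\}$. Then $E_k$ solves the inhomogeneous ODE $\widetilde{L}_{\sigma_j}[E_k]=\bigl(\widetilde{L}_0-\widetilde{L}_{\sigma_j}\bigr)[\mathfrak{d}]$, whose right-hand side is $O(\sigma_j)$ in sup norm on any bounded stretch because the coefficient differences in \eqref{eqE:ODE} vanish at $\sigma_j=0$. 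Representing $E_k$ on $[R,\infty)$ by variation of constants against $\Phi(t)\Phi^{-1}(R)$, the bound $\|\Phi(t)\Phi^{-1}(s)\|\le M_\varepsilon(\sigma_j+t)^\theta/(\sigma_j+s)^\theta$ from Lemma \ref{lemE:fine} cancels exactly against the weight $(\sigma_j+t)^{-\theta}$ in the statement, while $(E_k(R),E_k'(R),E_k''(R))\to 0$ by the compact-interval step. A dyadic Gronwall bootstrap on $[R,2R],[2R,4R],\ldots$ then closes.

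The chief obstacle is precisely this extension to $[\varepsilon,\infty)$: the exponential contraction rate $\gamma_2\sigma_j$ appearing in Lemmas \ref{lemE:P} and \ref{lemE:fine} degenerates to $0$ as $\sigma_j\to 0$, so one cannot use pointwise exponential decay to propagate smallness. One must instead exploit the oscillatory structure of the limit equation, encoded in the explicit trigonometric representation at the end of Lemma \ref{lemE:fine}: its amplitudes are bounded uniformly in $\sigma_j$ once measured in the weighted norm $(\sigma_j+t)^{-\theta}\|\cdot\|$, which is exactly the norm of the claim. Combined with the $O(\sigma_j)$ forcing and the integrability of the weighted kernel against the inhomogeneity, the Gronwall bootstrap produces a uniform bound that tends to zero as $\sigma_j\to 0$.
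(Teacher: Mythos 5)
Your argument splits cleanly into two parts, and the first part — identifying the limit via the scaling of $\mathfrak{P}^{-1}(0)$ and the survival of only the $\mathfrak{j}_1$ mode — is essentially the same calculation the paper makes in the second half of its proof (the $\mathfrak{P}(t)\mathfrak{P}^{-1}(0)$ representation). Where the two diverge sharply is the extension from a compact interval to $[\varepsilon,\infty)$, and this is where your plan has a genuine gap. The paper does not run a Gronwall bootstrap. Instead it factors $\Phi(t)\Phi^{-1}(0)=\Phi(t)\Phi^{-1}(1)\,\mathfrak{P}(1)\mathfrak{P}^{-1}(0)$, notes that $\Phi^{-1}(1)\mathfrak{P}(1)$ tends to a fixed nondegenerate matrix, and then relies on the claim that the explicit near-infinity fundamental solutions $j_1,j_2,j_3$ from \eqref{eq:fund_sol_near_infinity} converge to their $\sigma=0$ limits $a_1,a_2,a_3$ in the weighted norm $\sup_{[\varepsilon,\infty)}(\sigma+t)^{-\theta}|\cdot|$. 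In other words the uniform-in-$t$ convergence is traced to analyticity of the $j_k$ in $\sigma$ together with their closed-form exponential/power-law form, not to any stability estimate for the IVP.

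Your Gronwall route does not close. Write $E_k$ as you propose, so that $\widetilde{L}_{\sigma_j}[E_k]=(\widetilde{L}_0-\widetilde{L}_{\sigma_j})[\mathfrak{d}]$. The coefficient differences in \eqref{eqE:ODE} that do not decay in $t$ are the constant terms $3\gamma_2\sigma_j$, $2\gamma_2^2\sigma_j^2$, $4\gamma_1\gamma_2\sigma_j$; since $\mathfrak{d}(t)\asymp t^\theta$ (with bounded oscillation) and likewise for its derivatives, the forcing is of order $\sigma_j\,t^{\theta}$ globally — not $o(\sigma_j t^\theta)$. Variation of constants against $\Phi(t)\Phi^{-1}(s)$, using the bound from Lemma~\ref{lemE:fine} with rate $\gamma_2\sigma_j$ (since $\omega\to 4\gamma_1>0$), gives
\[
(\sigma_j+t)^{-\theta}\bigl|\text{(inhomogeneous part)}\bigr|
\;\lesssim\;
\sigma_j\int_R^t e^{-\gamma_2\sigma_j(t-s)}\,\dif s
\;\lesssim\;
\frac{1}{\gamma_2},
\]
which is bounded but \emph{does not vanish} as $\sigma_j\to 0$: the $\sigma_j$ in the forcing is exactly canceled by the $1/(\gamma_2\sigma_j)$ from integrating the degenerating exponential. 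You flag this degeneracy yourself as the chief obstacle, but then assert without argument that the ``oscillatory structure'' and ``integrability of the weighted kernel'' repair it. That is precisely the claim the estimate above shows is false at the level at which you are working: the weighted kernel is not integrable uniformly in $\sigma_j$, only up to a factor $1/\sigma_j$. To make an argument of this shape succeed you would need genuine cancellation in the time integral — e.g.\ integration by parts against the oscillatory phase of $\mathfrak{d}$ and $\Phi(t)\Phi^{-1}(s)$ — and that is a substantially harder estimate than anything in Lemmas~\ref{lemE:P} or \ref{lemE:fine}. The paper sidesteps it entirely by switching to the near-infinity basis at $t=1$; if you want a Gronwall-free proof you should do the same.
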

\begin{proof}
We can represent 
$\widetilde{\mathcal{D}}_0(t),$
$\widetilde{\mathcal{N}}_0(t),$
$\widetilde{\mathcal{H}}_0(t)$
as the entries in the first row of
\[
\Phi(t)\Phi^{-1}(0)
=
\Phi(t)\Phi^{-1}(1)
\mathfrak{P}(1)\mathfrak{P}^{-1}(0).
\]
The matrix in the middle $\Phi^{-1}(1)\mathfrak{P}(1)$ converges to a nondegenerate matrix as $\sigma \to 0$.
The first row of $\Phi(t),$ given by $j_1,j_2,j_3$ each converge to solutions $a_1,a_2,a_3$ of \eqref{eqE:ODE} as $\sigma \to 0$ in the sense that for any $\varepsilon >0$
\[
\sup_{t \in [\varepsilon,\infty)} 
(\sigma + t)^{-\theta}
|j_k(t) - a_k(t)|
\underset{\sigma \to 0}{\longrightarrow} 0.
\]
The columns of $\mathfrak{P}^{-1}(0)$ 
behave like
\[
\mathfrak{P}^{-1}(0)
\underset{\sigma \to 0}{\asymp}
\begin{bmatrix*}[l]
\sigma^{-2\theta} & 
\sigma^{1-2\theta} & 
\sigma^{2-2\theta} \\
\sigma^{-1-\theta} & 
\sigma^{-\theta} & 
\sigma^{1-\theta} \\
\sigma^{-2} & 
\sigma^{-1} & 
1 \\
\end{bmatrix*},
\]
where we mean that the ratios of the respective entries converge to a nonzero constant as $\sigma\to 0$. To see that the limits that result are always equal to $\mathfrak{d},$ we can instead represent $\widetilde{\mathcal{D}}_0(t)$,
$\widetilde{\mathcal{N}}_0(t)$,
$\widetilde{\mathcal{H}}_0(t)$ as the first row of $\mathfrak{P}(t)\mathfrak{P}^{-1}(0)$.  On taking $\sigma \to 0,$ only the multiple of $\mathfrak{j}_1$ survives.
\end{proof}

\paragraph{The fundamental solutions of the unscaled ODE.}
Finally, we relate the estimates we have made back to the unscaled differential differential equation \eqref{eqE:ODE} and \eqref{eqE:fu}.  So we set
\begin{equation}\label{eqE:fu} \begin{aligned}
\text{(Dirichlet sol., ${\mathcal{D}}_s(t)$)} \quad &{L}[{\mathcal{D}}_s(t)] = 0 \quad \text{where} \quad  {\mathcal{D}}_s(s) = (1,0,0)^T,\\
\text{(Neumann sol., ${\mathcal{N}}_s(t)$)} \quad &{L}[{\mathcal{N}}_s(t)] = 0 \quad \text{where} \quad  {\mathcal{N}}_s(s) = (0,1,0)^T,\\
\text{(2nd derivative sol., ${\mathcal{H}}_s(t)$)} \quad &{L}[{\mathcal{H}}_s(t)] = 0 \quad \text{where} \quad  {\mathcal{H}}_s(s) = (0,0,1)^T.
\end{aligned}
\end{equation}
To make the connection to \eqref{eqE:fu}, we observe that an initial value problem
\[
{L}[f(t)] = 0 \quad \text{and} \quad
\begin{bmatrix}
f(t_0) \\
f'(t_0) \\
f''(t_0) \\
\end{bmatrix}
=\begin{bmatrix}
c_1 \\
c_2 \\
c_3 \\
\end{bmatrix}
\, \, \longleftrightarrow \, \,
\widetilde{L}[f(t/\sigma_j)]
= 0 
\, \, \text{and} \, \,
\begin{bmatrix}
f(t_0/\sigma_j) \\
\partial_{t_0/\sigma_j} f(t_0/\sigma_j) \\
\partial^2_{t_0/\sigma_j} f(t_0/\sigma_j) \\
\end{bmatrix}
=\begin{bmatrix}
c_1 \\
c_2/\sigma_j \\
c_3/\sigma_j^2 \\
\end{bmatrix}.
\]
Thus we have the identification
\begin{equation}\label{eqE:DNH}
{\mathcal{D}}^{(\sigma_j)^2}_s(t) =
\widetilde{\mathcal{D}}_{s\sigma_j}(t\sigma_j),
\quad
{\mathcal{N}}^{(\sigma_j)^2}_s(t) =
\widetilde{\mathcal{N}}_{s\sigma_j}(t\sigma_j)/\sigma_j,
\quad
{\mathcal{H}}^{(\sigma_j)^2}_s(t) =
\widetilde{\mathcal{H}}_{s\sigma_j}(t\sigma_j)/\sigma_j^2.
\end{equation}
Using Lemma \ref{lemE:fine}, it is possible to give asymptotic expressions for these kernels and corresponding estimates.  

We recall that we can express the terms in the Volterra equation for SDANA as
\[
\Exp f(\XX_t) = R h_1(t) + \widetilde{R} h_0(t) + \int_0^t \mathcal{K}_s(t) \Exp f(\XX_s)\,\dif s,
\]
for a given spectral measure $\mu$ (especially,\ the empirical spectral measure or the limiting empirical spectral measure) by
\begin{equation}\label{eqE:fk}
\begin{aligned}
{G}^{(\sigma^2)}(t)
&\coloneqq
\biggl(
{\mathcal{D}}^{(\sigma^2)}_0(t)
+
(2\theta-2\gamma_2\sigma^2){\mathcal{N}}^{(\sigma^2)}_0(t)
+((2\theta-2\gamma_2\sigma^2)^2 - 2\gamma_1\sigma^2-2\theta)
{\mathcal{H}}^{(\sigma^2)}_0(t)
\biggr), \\
h_0(t) &\coloneqq \frac{1}{2\varphi^2(t)}
\int\limits_0^\infty 
{G}^{(\sigma^2)}(t)
\mu(\dif \sigma^2)
\quad
\text{and}
\quad
h_1(t) \coloneqq \frac{1}{2\varphi^2(t)}
\int\limits_0^\infty 
\sigma^2
{G}^{(\sigma^2)}(t)
\mu(\dif \sigma^2) \\
\mathcal{K}_s(t) &\coloneqq
\frac{\varphi^2(s)}{\varphi^2(t)}
\int\limits_0^\infty 
\sigma^4
\biggl(
\gamma_2^2
{\mathcal{D}}^{(\sigma^2)}_s(t)
+
\biggl(
2\gamma_2\gamma_1 + 2\gamma_2^2\bigl(\tfrac{\theta}{1+s}-\gamma_2\sigma^2\bigr)
\biggr){\mathcal{N}}^{(\sigma^2)}_s(t)
\biggr)
\mu(\dif \sigma^2) \\
&+
\frac{\varphi^2(s)}{\varphi^2(t)}
\int\limits_0^\infty 
\sigma^4
\biggl(
{\gamma_2^2} \left [ \tfrac{4\theta^2-2\theta}{(1+s)^2} - \tfrac{8\theta \gamma_2 \sigma^2}{1+s} + 4 \sigma^4 \gamma_2^2  \right ] - 8 \gamma_1\gamma_2^2 \sigma^2+ \tfrac{6 \theta \gamma_2\gamma_1}{1+s} + 2 \gamma_1^2\biggr)
{\mathcal{H}}^{(\sigma^2)}_s(t)
\mu(\dif \sigma^2) 
\end{aligned}
\end{equation}

\paragraph{Reduction to a convolution kernel.}
We work under the assumption that the support of $\mu$ is contained in $[\lambda_-,\lambda_+]$.

For the kernel, we start by using the asymptotics for $\mathcal{D},\mathcal{N},\mathcal{H}$ in Lemma \ref{lemE:fine}, which give
\begin{equation}\label{eq:DNH1}
\begin{aligned}
{\mathcal{H}}_s(t) &= \biggl(\tfrac{1-\cos\bigl(\sigma\sqrt{\omega}(t-s)-\log\bigl(\tfrac{1+t}{1+s}\bigr)\tfrac{\theta\gamma_2\sigma}{\sqrt{\omega}}\bigr)+o_{s,\varepsilon}(1)}{\sigma^2{\omega}}\biggr)\exp\bigl(-\sigma^2\gamma_2 (t-s)\bigr)\frac{ (1+t)^{\theta}}{(1+s)^{\theta}}, \\
{\mathcal{N}}_s(t) &= 2\sigma^2 \gamma_2 {\mathcal{H}}_s(t) + 
\biggl(\tfrac{\sin\bigl(\sigma\sqrt{\omega}(t-s)-\log\bigl(\tfrac{1+t}{1+s}\bigr)\tfrac{\theta\gamma_2\sigma}{\sqrt{\omega}}\bigr)
+o_{s,\varepsilon}(1)}{\sigma\sqrt{\omega}}\biggr)\exp\bigl(-\sigma^2\gamma_2 (t-s)\bigr)\frac{ (1+t)^{\theta}}{(1+s)^{\theta}}, \\
{\mathcal{D}}_s(t) &= \sigma^2\gamma_2 {\mathcal{N}}_s(t) 
-\sigma^4\gamma_2^2 {\mathcal{H}}_s(t) +
(1+o_{s,\varepsilon}(1))\exp\bigl(-\sigma^2\gamma_2 (t-s)\bigr)\frac{ (1+t)^{\theta}}{(1+s)^{\theta}}. \\
\end{aligned}
\end{equation}
To apply these asymptotics we need to cut out a window $I_\epsilon$ of $\sigma$ for which $\omega=\omega(\sigma) = 4\gamma_1 - \gamma_2^2 \sigma^2$ is small.  So for an $\epsilon >0$ let $I_\epsilon$ be those $\sigma$ for which $|\omega|<\epsilon$.  If $\omega(\sigma)$ is bounded away from $0$ on the support of $\mu$ we may simply take $\epsilon = 0$ in what follows.  By tracking the leading terms, we arrive at 
\begin{equation}\label{eqE:KS}
\begin{aligned}
\mathcal{K}_s(t)
&=
(1+o_s(1))
\frac{2\gamma_1^2\varphi(s)}{\varphi(t)}
\int_{I_\epsilon}
\sigma^2
e^{-\sigma^2\gamma_2(t-s)}
\biggl(
\tfrac{1 - \cos\bigl(\vartheta(\sigma) + \sigma\sqrt{\omega}(t-s)-\log\bigl(\tfrac{1+t}{1+s}\bigr)\tfrac{\theta\gamma_2\sigma}{\sqrt{\omega}}\bigr)}{\omega}
\biggr)
\mu(\dif \sigma^2) \\
&+\mathcal{O}\bigl(\epsilon e^{-(4\gamma_1\gamma_2^{-1} + M\epsilon)(t-s)}\bigr)\\
\end{aligned}
\end{equation}
where $\vartheta(\sigma)$ is a phase depending on $\gamma_1,\gamma_2,\sigma,$ having $\vartheta(\sigma) \sim -2\sqrt{\gamma_1 \omega(\sigma)}$ as $\omega\to0$.  The phase is defined explicitly by
\begin{equation}\label{eqE:phase}
\begin{aligned}
&\cos(\vartheta(\sigma)) = \frac{ (\omega - 2\gamma_1)^2 - 2\gamma_1^2}{2\gamma_1^2}, \\
&\sin(\vartheta(\sigma)) = \frac{ (\omega - 2\gamma_1)\sqrt{4\gamma_1-\omega}\sqrt{\omega}}{2\gamma_1^2}. \\
\end{aligned}
\end{equation}

This is essentially a convolution type Volterra kernel, and so we simplify it by using an idealized kernel.  Define
\begin{equation}\label{eqE:Ist}
\mathcal{I}_s(t)
=
\mathcal{I}(t-s)
=
2\gamma_1^2
\int_0^\infty
\sigma^2
e^{-\sigma^2\gamma_2(t-s)}
\biggl(
\frac{1 - \cos\bigl(\vartheta(\sigma) + \sigma\sqrt{\omega}(t-s)\bigr)}{\omega}
\biggr)
\mu(\dif \sigma^2).
\end{equation}
This is a convolution kernel, which is comparable in norm to $\mathcal{K}_s(t)$.  As the theory for positive convolution kernels is substantially simpler, we turn to studying the equation:
\begin{equation}\label{eqE:Psi}
\Psi(t) = R\varphi(t)h_1(t) + \widetilde{R}\varphi(t)h_0(t) + \int_0^t \mathcal{I}(t-s)\Psi(s)\,\dif s.
\end{equation}
We will reduce the asymptotics of $\psi$ to those of $\Psi$.

A simple computation gives that the $\text{L}^1$ norm of the $\mathcal{I}$ is given by
\begin{equation}\label{eqE:convergence}
\|\mathcal{I}\|
=
\int_0^\infty \mathcal{I}(t)\,dt
=
\int_{0+}^\infty 
\tfrac{\gamma_1 + \sigma^2\gamma_2^2}{2\gamma_2}
\mu(\dif \sigma^2),
\end{equation}
which gives a sufficient condition for neighborhood convergence.  Provided that the measure $\mu$ puts no mass at the critical point, the behavior of solutions  \eqref{eqE:Psi} are related to the original Volterra equation.
\begin{proposition}\label{propG:newequivalence}
Provided $\|\mathcal{I}\| < 1,$ 
\[
\Exp_{\HH} f(\XX_t)
\underset{t \to \infty}{\longrightarrow} \frac{\widetilde{R} \mu(\{0\})}{1-\|\mathcal{I}\|}.
\]
\end{proposition}
\begin{proof}
The forcing functions $h_k$ satisfy 
\[
R h_1 + \widetilde{R} h_0 \underset{t \to \infty}{\longrightarrow} \widetilde{R} \mu(\{0\}).
\]
Moreover, for any $\epsilon > 0$ it can be decomposed into two pieces, 
\[
R h_1 + \widetilde{R} h_0
=F_0 + F_\epsilon,
\]
the first of which is regularly varying and the latter of which is bounded by $\epsilon$ and tends to $0$ as $t \to \infty$.  This comes by decomposing the eigenvalues into those separated from the critical point $\{4\gamma_1/\gamma_2^2\}$ and those in a neighborhood of it.  Now it follows that solving the Volterra equation with $F_0,$
\[
X_0(t) = \varphi(t)F_0(t) + \int_0^t \mathcal{I}(t-s)X_0(s) \dif s,
\]
which from Lemma \ref{q:limit}
\[
X_0(t) \underset{t \to \infty}{\sim} \frac{\varphi(t)F_0(t)}{1-\|\mathcal{I}\|}.
\]
For the second piece, we have that for 
\[
X_1(t) = \varphi(t)F_1(t) + \int_0^t \mathcal{I}(t-s)X_1(s) \dif s,
\]
we conclude
\[
X_1(t) \leq \frac{\epsilon}{1-\|\mathcal{I}\|}
\quad\text{and}\quad
X_1/\varphi(t) \underset{t \to \infty}{\longrightarrow} 0.
\]
Combining everything we conclude that by taking $\epsilon \to 0$ 
\[
\Psi(t)/\varphi(t) 
\underset{t \to \infty}{\longrightarrow} 
\frac{\widetilde{R} \mu(\{0\})}{1-\|\mathcal{I}\|}.
\]

By taking differences, we turn to bounding 
\[
\tfrac{\Psi(t)}{\varphi(t)} - \Exp_{\HH} f(\XX_t)
=
\int_0^t \mathcal{K}_s(t)\bigl( \tfrac{\Psi(s)}{\varphi(s)} -  \Exp_{\HH} f(\XX_s)\bigr) \dif s
+
\int_0^t 
\bigl(\mathcal{I}(t-s)\tfrac{\varphi(s)}{\varphi(t)}-\mathcal{K}_s(t)\bigr) \tfrac{\Psi(s)}{\varphi(s)}\dif s.
\]
Using that there is an $\epsilon>0$ and a $C>0$ so that
\begin{equation}\label{eqG:Its}
|\mathcal{I}(t-s)\tfrac{\varphi(s)}{\varphi(t)}-\mathcal{K}_s(t)| \leq C(1+s)^{-1} \mathcal{I}(t-s) + \epsilon e^{-(4\gamma_1/\gamma_2 - C\epsilon)(t-s)},
\end{equation}
we conclude that this error term tends to $0$.  The resolvent $R_s(t)$ of $\mathcal{K}_s(t)$ is bounded in $\text{L}^1$ using standard theory (see \cite[Theorem 3]{gripenberg1980volterra}) and by comparison to $\mathcal{I}$.  Then
\[
\tfrac{\Psi(t)}{\varphi(t)} - \Exp_{\HH} f(\XX_t)
=
\int_0^t R_x(t)
\int_0^x 
\bigl(\mathcal{I}(x-s)\tfrac{\varphi(s)}{\varphi(t)}-\mathcal{K}_s(t)\bigr) \tfrac{\Psi(s)}{\varphi(s)}\dif s\dif x.
\]
Then applying Fubini
\[
\tfrac{\Psi(t)}{\varphi(t)} - \Exp_{\HH} f(\XX_t)
=
\int_0^t 
\bigl(\mathcal{I}(x-s)\tfrac{\varphi(s)}{\varphi(t)}-\mathcal{K}_s(t)\bigr) \tfrac{\Psi(s)}{\varphi(s)}
\biggl(\int_s^t
R_x(t)
\dif x\biggr)
\dif s.
\]
Bounding the integral of $R_x(t)$ and using the bound \eqref{eqG:Its}, it follows we have that
\[
\bigl(\tfrac{\Psi(t)}{\varphi(t)}\bigr)^{-1}
|\tfrac{\Psi(t)}{\varphi(t)} - \Exp_{\HH} f(\XX_t)|
\underset{t \to \infty}{\longrightarrow} 0.
\]
\end{proof}

\paragraph{Average case analysis in the strongly convex case.}
We suppose now that we have taken the limit of empirical spectral measures, and consider a measure $\mu$ with support $\{0\} \cup [\lambda^-,\lambda^+]$ for some $\lambda^- > 0$.
We suppose that $\lambda^-$ is not at the critical point, i.e.\ $4\gamma_1 - \gamma_2^2 \lambda^- \neq 0.$ We further suppose that $\mu$ has a density with regular boundary behavior at $\lambda^-$:
\begin{equation}\label{eqE:leftedge}
\mu([\lambda^-,\lambda^-+\epsilon]) \underset{\epsilon \to 0}{\sim} \ell \epsilon^{\alpha}.
\end{equation}
We need to derive the asymptotic behavior of $h_0,h_1,\mathcal{K}$.  It is convenient if we remove the effect of any point mass of $\mu$ at $0,$ which effects the eventual convergence of the algorithm.  
Set $\widetilde{h}_0 = h_0 - \mu(\{0\})$ and $\widetilde{h}_1 = h_1$. 
This leads to precise asymptotics of the forcing function $\widetilde{h}_k$, as from the asymptotics of $j_0,j_1,j_2$ we have (recalling $\omega(\sqrt{\lambda^-}) = 4\gamma_1 - \gamma_2^2\lambda^{-}$) we have for $k \in \{0,1\}$ 
\begin{equation}\label{eqE:f0}
\widetilde{h}_k(t) \underset{t\to \infty}{\sim}
\ell_k e^{-\gamma_2 \lambda^{-} t}(1+t)^{-\theta}
\begin{cases}
1+c_1\cos\bigl( \sqrt{\lambda^{-}\omega} t -\log(1+t)\tfrac{\theta \gamma_2\sqrt{\lambda^-} }{\sqrt{\omega}} + c_2 \bigr), &\text{if } \omega > 0, \\
e^{\lambda^{-}\sqrt{-\omega}t}(1+t)^{-\tfrac{\theta \gamma_2\sqrt{\lambda^-}}{\sqrt{-\omega}}}, &\text{if } \omega < 0. \\
\end{cases}
\end{equation}

\paragraph{Malthusian exponent.}
We define the Malthusian exponent $\lambda^*,$ if it exists, as the solution of 
\[
\int_0^\infty e^{\lambda^* t}\mathcal{I}(t)\dif t
=1.
\]
We observe that using \eqref{eqE:phase} we can represent for any $\lambda < \sigma^2\gamma_2$
\begin{multline}\label{eqE:malthus}
\int_0^\infty
\sigma^2
e^{\lambda t -\sigma^2\gamma_2t}
\biggl(
\frac{1 - \cos\bigl(\vartheta(\sigma) + \sigma\sqrt{\omega}t\bigr)}{\omega}
\biggr)
\dif t\\
=
\biggl(
\frac{\gamma_2 \sigma^2 \omega(\omega-2\gamma_1)-(\sigma^2\gamma_2-\lambda)( (\omega-2\gamma_1)^2-2\gamma_1^2)}
{
2\gamma_1^2( (\sigma^2 \gamma_2-\lambda)^2 + \sigma^2 \omega)
}
+
\frac{1}{\sigma^2\gamma_2-\lambda}
\biggr)\frac{\sigma^2}{\omega}.
\end{multline}
On specializing to $\lambda=0,$ we can further simplify this to
\[
\int_0^\infty
\sigma^2
e^{-\sigma^2\gamma_2t}
\biggl(
\frac{1 - \cos\bigl(\vartheta(\sigma) + \sigma\sqrt{\omega}t\bigr)}{\omega}
\biggr)
\dif t
=\frac{\gamma_1 + \sigma^2\gamma_2^2}{4\gamma_1^2\gamma_2}.
\]

Returning to \eqref{eqE:malthus} and algebraically simplifying the expression, we can can write $\lambda^*$ as the solution
\[
1=\int_0^\infty e^{\lambda^* t}\mathcal{I}(t)\dif t
=\int_0^\infty
\sigma^4
\biggl(
\frac{
\gamma_2^2(\sigma^2 \gamma_2-\lambda^*)^2 
+\gamma_2(\omega-2\gamma_1)(\sigma^2 \gamma_2-\lambda^*)
+2\gamma_1^2
}
{
( (\sigma^2 \gamma_2-\lambda^*)^2 + \sigma^2 \omega)
(\sigma^2 \gamma_2-\lambda^*)
}
\biggr)\mu(\dif \sigma^2).
\]
We let $\mathcal{F}(\lambda^*)$ be the expression on the right hand side.
We note that expression is necessarily increasing in $\lambda^*$ (which is clear from the expression $\mathcal{F}(\lambda)=\int_0^\infty e^{\lambda t}\mathcal{I}(t)\dif t)$.  Furthermore, for $\lambda > \lambda_{-},$ $\mathcal{F}(\lambda)=\infty$ as $\mathcal{I}(t)$ decays no slower than $e^{-\lambda_{-}t}$.  Provided $\alpha > 1$ (recall \eqref{eqE:leftedge}), then $\mathcal{F}(\lambda_-)<\infty,$ and thus the existence of the Malthusian exponent $\lambda^*$ is equivalent to $\mathcal{F}(\lambda_-) \geq 1$.

\begin{proposition}\label{propE:Istrconvx}
Suppose that \eqref{eqE:leftedge} holds for some $\lambda_- > 0$ with $\omega(\lambda_-) \neq 0$ and for $\alpha > 1$.  Then if $\mathcal{F}(\lambda_-) < 1$ the solution of \eqref{eqE:Psi} satisfies
\[
\frac{\Psi(t)}{\varphi(t)}  - \frac{ \widetilde{R}\mu(\{0\})}{1-\|\mathcal{I}\|} \sim c e^{-\gamma_2\lambda_- t} t^{-\alpha-\theta}
\]
for some $c>0$
or if $\mathcal{F}(\lambda_-) > 1$ then with $\lambda^*$ the unique solution of $\mathcal{F}(\lambda^*) = 1$ for some constant $c>0$
\[
\frac{\Psi(t)}{\varphi(t)} - \frac{ \widetilde{R}\mu(\{0\})}{1-\|\mathcal{I}\|} \sim c e^{- \lambda_* t}t^{-\theta}.
\]
\end{proposition}
\begin{proof}
This follows standard renewal theory machinery.  See \cite{Asmussen} or \cite[Theorem 29]{paquetteSGD2021}.
\end{proof}

\begin{lemma}\label{lemE:kernelreplacement}
Then for $\lambda_- > 0$ for which  $\omega(\lambda_-) \neq 0$ and if $\mu({0}) = 0,$
\[
\Psi(t)^{-1}
|\Psi(t)-\varphi(t)\psi(t)| \underset{t \to \infty}{\longrightarrow} 0.
\]
\end{lemma}
\begin{proof}
We start from the raw Volterra equation for $\psi$ which is given by
\[
\psi(t) = R h_1(t) + \widetilde{R} h_0(t) + \int_0^t \mathcal{K}_s(t) \psi(s)\,\dif s.
\]
Multiplying through by $\varphi(t),$ we therefore have
\[
\varphi(t)\psi(t) = R \varphi(t) h_1(t) + \widetilde{R} \varphi(t)h_0(t) + \int_0^t \mathcal{K}_s(t) \tfrac{\varphi(t)}{\varphi(s)} \varphi(s)\psi(s)\,\dif s.
\]
This allows us to express the difference $\Psi(t)-\varphi(t)\psi(t)$
as 
\begin{equation}\label{eqE:Vchange}
\Psi(t)-\varphi(t)\psi(t) 
=\int_0^t \mathcal{K}_s(t) \tfrac{\varphi(t)}{\varphi(s)} \bigl(\Psi(s)-\varphi(s)\psi(s)\bigr)\dif s
+\int_0^t \bigl(\mathcal{I}_s(t) - \mathcal{K}_s(t) \tfrac{\varphi(t)}{\varphi(s)}\bigr) \Psi(s)\dif s.
\end{equation}
We can dominate the kernel above and below by
\[
\mathcal{K}_s(t) \tfrac{\varphi(t)}{\varphi(s)}
=
(1+ \mathcal{O}(s^{-1}))\mathcal{I}_s(t),
\]
with the error uniform in $t \geq s$; this uses Lemma \ref{lemE:singularpoint} and the asymptotic representations of the fundamental solutions Lemma \ref{lemE:fine} (see also \eqref{eqE:KS}).
Let $\lambda$ be the Malthusian exponent, if it exists, or $\lambda_-$ otherwise.
The latter forcing term of \eqref{eqE:Vchange} 
can be bounded by
\[
\biggl|
\int_0^t e^{-\lambda s}\bigl(\mathcal{I}_s(t) - \mathcal{K}_s(t) \tfrac{\varphi(t)}{\varphi(s)}\bigr) e^{\lambda s}\Psi(s)\dif s
\biggr|
\leq
\int_0^t C(1+s)^{-1}e^{-\lambda s}\mathcal{I}(t-s) e^{\lambda s}\Psi(s)\dif s.
\]

In the case that $\lambda$ is the Malthusian exponent, we have that $e^{\lambda s}\Psi(s)$ is bounded
and $e^{-\lambda s}\mathcal{I}(s)$ has $\text{L}^1$--norm $1$.
It follows that
\[
\int_0^t (1+s)^{-1}e^{-\lambda s}\mathcal{I}(t-s) e^{\lambda s}\Psi(s)\dif s
\lesssim
e^{-\lambda t}
\int_0^t (1+(t-s))^{-1}e^{-\lambda s}\mathcal{I}(s) \dif s.
\]
Thus by dominated convergence, we have that the forcing term satisfies
\[
F(t) \coloneqq
e^{\lambda t}
\biggl|
\int_0^t e^{-\lambda s}\bigl(\mathcal{I}_s(t) - \mathcal{K}_s(t) \tfrac{\varphi(t)}{\varphi(s)}\bigr) e^{\lambda s}\Psi(s)\dif s
\biggr|
\to 0.
\]
From Gronwall's inequality \cite[9.8.2]{gripenberg1990volterra} we conclude there is a non-negative resolvent kernel $r(t,s)$ so that
\begin{equation}\label{eqE:Vchange2}
e^{\lambda t}|\Psi(t)-\varphi(t)\psi(t))|
\leq
\int_0^t
r(t,t-s)|F(t-s)|\dif s.
\end{equation}
We deduce that the kernel has \emph{bounded uniformly continuous type} (see \cite[Theorem 9.5.4]{gripenberg1990volterra}; see also \cite[Theorem 9.9.1]{gripenberg1990volterra}) and therefore satisfies
\[
\lim_{h \to \infty}\sup_{t \geq 0} \int_0^{t-h} r(t,u)\dif u = 0.
\]
From here it follows from \eqref{eqE:Vchange2} that \[
e^{\lambda t}|\Psi(t)-\varphi(t)\psi(t))|
\to 0
\]
as $t\to \infty$, and hence from the asymptotics of $\Psi,$ the same holds when dividing by $\Psi$.

For the case where $\lambda$ is not the Malthusian exponent, we must conclude a slightly stronger bound.  This follows from first showing that the forcing function and the kernel $\mathcal{I}(t)$ both decay like $e^{-\lambda t} t^{-\alpha}$.  By conjugating the problem by $(1+t)^{\alpha},$ we reduce the problem to the same strategy as used above.
\end{proof}

\paragraph{Average case analysis in the non--strongly convex case.}
We turn to the assumption that $\mu$ is contained in $[0,\lambda_+],$ with a possible atom at $0$ and a density that is bounded away from its endpoints and that moreover $\mu$ has regular boundary behavior at $0$ with
\begin{equation}\label{eqE:leftedge2}
\mu((0,\epsilon]) \underset{\epsilon \to 0}{\sim} \ell \epsilon^{\alpha}.
\end{equation}
In the case of Marchenko--Pastur, this $\alpha = 1/2$.
We again need the behavior of $h_k$ for $k\in\{0,1\}$.  From the boundary condition at $0$, we have that for $k \in \{0,1\}$
\[
\widetilde{h}_k(t) \sim \frac{\ell \alpha}{2\varphi^2(t)}
\int_0^\infty (\sigma^2)^{\alpha-1}\sigma^{2k}
\biggl(
{\mathcal{D}}^{(\sigma^2)}_0(t)
+
2\theta{\mathcal{N}}^{(\sigma^2)}_0(t)
+4\theta^2
{\mathcal{H}}^{(\sigma^2)}_0(t)
\biggr)
\dif(\sigma^2).
\]
Now as we are in a neighborhood of $\sigma \approx 0$ we use the scaled solutions \eqref{eqE:DNH}, due to which we can express the solution as
\[
\widetilde{h}_k(t) \sim \frac{\ell \alpha}{2\varphi^2(t)}
\int_0^\infty (\sigma^2)^{\alpha-1}\sigma^{2k}
\biggl(
\widetilde{\mathcal{D}}^{(\sigma^2)}_0(t\sigma )
+
2\sigma^{-1}\theta\widetilde{\mathcal{N}}^{(\sigma^2)}_0(t\sigma)
+4\sigma^{-2}\theta^2
\widetilde{\mathcal{H}}^{(\sigma^2)}_0(t\sigma)
\biggr)
\dif(\sigma^2).
\]
We pick a $\varepsilon > 0$ and decompose the integral according to $t\sigma > \varepsilon$ and those below.  For those $\sigma$ above, we use Lemma \ref{lemE:scaling} and conclude
\[
\begin{aligned}
\widetilde{h}_k(t)
&\sim
\frac{\ell \alpha}{2\varphi(t)}
\int_{\varepsilon^{-1}t^{-1}}^\infty (\sigma^2)^{\alpha+k-1}
\mathcal{O}\bigl(e^{-\sigma^2\gamma_2 t}\bigr)
\dif(\sigma^2) \\
&+
\frac{\ell \alpha}{2\varphi^2(t)}
\int_{\varepsilon^2t^{-2}}^{\varepsilon^{-1}t^{-1}} (\sigma^2)^{\alpha+k-1-\theta}
\mathfrak{d}(t\sigma )
\biggl(
1
+2\theta
+4\theta^2
\biggr)
\dif(\sigma^2) \\
&+
\frac{\ell \alpha}{2\varphi^2(t)}
\int_0^{\varepsilon^2t^{-2}} (\sigma^2)^{\alpha+k-1}
\biggl(
1+2t\theta+2t^2\theta^2+o_\epsilon(1)
\biggr)
\dif(\sigma^2) \\
\end{aligned}
\]
Both first and last integrals will be negligible.  For the middle integral, we change variables with $x^2 = \sigma^2 t^2$ to get
\begin{equation*}
\widetilde{h}_k(t)
\sim
\frac{\ell \alpha}{2t^{2\alpha+2k}}
\int_{\varepsilon^2}^{\varepsilon^{-1}t} (x^2)^{\alpha+k-1-\theta}
\mathfrak{d}(x)
\biggl(
1
+2\theta
+4\theta^2
\biggr)
\dif(x^2).
\end{equation*}
The integral is convergent when $2\alpha+2k -\theta< 0$ as $\mathfrak{d}$ grows like $x^{\theta}$ as $x\to \infty$ and $\mathfrak{d}$ tends to $0$ like $x^{2\theta}$ as $x\to 0$.  Thus we may take $\varepsilon \to 0$ and conclude
\begin{equation}\label{eqE:nscf0}
\widetilde{h}_k(t)
\sim
\frac{\ell \alpha}{2t^{2\alpha+2k}}
\biggl(
1
+2\theta
+4\theta^2
\biggr)
\int_{0}^{\infty} (x^2)^{\alpha+k-1-\theta}
\mathfrak{d}(x)
\dif(x^2).
\end{equation}

We again use the approximate convolution structure of the kernel, in particular the kernel $\mathcal{I}$ and the approximate Volterra equation \eqref{eqE:Psi}.
\begin{proposition}\label{prop:sqasymp}
When $\|\mathcal{I}\| < 1$ and $\widetilde{R}=0$ and $\theta > 2\alpha + 2$ it follows 
\[
\Psi(t)/\varphi(t) 
\underset{t \to \infty}{\sim} 
\frac{R\ell \alpha}{2t^{2\alpha+2}}
\frac{
\bigl(
1
+2\theta
+4\theta^2
\bigr)}
{
1-\|\mathcal{I}\|
}
\int_{0}^{\infty} (x^2)^{\alpha-\theta}
\mathfrak{d}(x)
\dif(x^2).
\]
In the case that $\widetilde{R} > 0,$
\[
\Psi(t)/\varphi(t) 
\underset{t \to \infty}{\sim} 
\frac{\widetilde{R}\ell \alpha}{2t^{2\alpha}}
\frac{
\bigl(
1
+2\theta
+4\theta^2
\bigr)}
{
1-\|\mathcal{I}\|
}
\int_{0}^{\infty} (x^2)^{\alpha-1-\theta}
\mathfrak{d}(x)
\dif(x^2).
\]
\end{proposition}
\begin{proof}
  The proposition is a corollary of Lemma \ref{q:rvrate}, using $\mathcal{I}(t) \sim c(\mu,\theta)t^{-2\alpha-4}$ and \eqref{eqE:nscf0}.
\end{proof}
Finally, we can derive the needed bound for the original problem:
\begin{lemma}\label{lemE:simplify}
When $\|\mathcal{I}\| < 1$ and \eqref{eqE:leftedge2} holds
\[
\psi(t)
\underset{t \to \infty}{\sim} \Psi(t)/\varphi(t).
\]
\end{lemma}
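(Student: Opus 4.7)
The plan is to follow the same strategy used in Lemma \ref{lemE:kernelreplacement} for the strongly convex case, adapting it to the slower polynomial decay that arises under \eqref{eqE:leftedge2}. Multiplying the raw Volterra equation for $\psi$ through by $\varphi(t)$ and subtracting the defining equation \eqref{eqE:Psi} for $\Psi$, the difference $\Delta(t) := \Psi(t) - \varphi(t)\psi(t)$ satisfies
\[
\Delta(t) = \int_0^t \mathcal{I}(t-s)\Delta(s)\,\dif s + E(t), \qquad E(t) := \int_0^t \Bigl[\mathcal{I}_s(t) - \mathcal{K}_s(t)\tfrac{\varphi(t)}{\varphi(s)}\Bigr]\varphi(s)\psi(s)\,\dif s.
\]
Since $\|\mathcal{I}\| < 1$, the convolution resolvent $\mathcal{R} = \sum_{k \ge 1}\mathcal{I}^{*k}$ is an integrable kernel and we may write $\Delta = E + \mathcal{R} \ast E$. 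Thus it suffices to prove $|E(t)| = o(\Psi(t))$, which by the polynomial rate already established for $\Psi$ amounts to $t^{2\alpha+2} E(t)\to 0$.

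To control $E(t)$, I will split the integral according to the decomposition of $\mathcal{K}_s(t)$ implicit in \eqref{eqE:J0} and \eqref{eqE:DNH1}: (a) a principal piece matching the convolution structure of $\mathcal{I}(t-s)$, (b) corrections arising from the explicit $1/(1+s)$ and $(1+s)^{-2}$ factors in the coefficients of $K_s(t)$, and (c) the contribution of the near-critical window $|\omega(\sigma)| < \epsilon$, handled via Lemma \ref{lemE:singularpoint}. Using the sharp asymptotics of Lemma \ref{lemE:fine} on pieces (a) and (c), and the pointwise decay in $s$ of the coefficients appearing in (b), the kernel error admits a bound
\[
\left|\mathcal{I}_s(t) - \mathcal{K}_s(t)\tfrac{\varphi(t)}{\varphi(s)}\right| \le \epsilon_s \,\mathcal{I}(t-s) + R_s(t),
\]
where $\epsilon_s \to 0$ as $s \to \infty$ and $R_s(t)$ is a fast-decaying remainder integrable against $\varphi(s)\psi(s)$.

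Combining this bound with the a priori growth estimate $\varphi(s)\psi(s) = O(\Psi(s))$, which itself follows by a bootstrap from the unperturbed Volterra equation and the regularly varying asymptotic for $\Psi$, dominated convergence applied to the convolution $\int_0^t \mathcal{I}(t-s)\varphi(s)\psi(s)\,\dif s$ yields $E(t)/\Psi(t) \to 0$. Feeding this estimate through the resolvent representation $\Delta = E + \mathcal{R}\ast E$ and using that $\Psi$ is regularly varying at infinity then gives $\Delta(t)/\Psi(t) \to 0$, which is precisely $\psi(t) \sim \Psi(t)/\varphi(t)$.

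The main obstacle is the second step: establishing the kernel error bound uniformly in $s$ across the three regimes described above, with remainders small enough to overcome the slow $t^{-2\alpha-2}$ decay of $\Psi$. In particular, the piece coming from the $s$-dependent coefficients in $K_s(t)$ is only $O(s^{-1})$ relative to the dominant part, and propagating such a bound through the convolution requires the product $s^{-1}\mathcal{I}(t-s)\varphi(s)\psi(s)$ to remain integrable with the correct scaling; this is precisely where the absence of strong convexity makes the argument more delicate than in Lemma \ref{lemE:kernelreplacement}, and it is here that the assumption $\theta > 2\alpha + 2$ from the preceding proposition will be needed to ensure the relevant integrals converge.
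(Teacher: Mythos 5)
The paper itself contains no proof of Lemma~\ref{lemE:simplify} --- the statement is the last thing before the bibliography --- so there is no official argument to compare against. Your plan is the natural one: it transposes the proof of Lemma~\ref{lemE:kernelreplacement} (the strongly convex analogue) to the slow-decay regime, writing $\Delta = \Psi - \varphi\psi$, expressing $\Delta = E + \mathcal{R} * E$ via the resolvent of $\mathcal{I}$, and reducing the claim to $E(t) = o(\Psi(t))$. That reduction is correct when $\|\mathcal{I}\|<1$, and your identification of the main obstruction --- that the $O(s^{-1})$ corrections hidden in the $s$-dependent coefficients of $K_s(t)$ must be beaten against polynomial rather than exponential decay of $\Psi$ --- is the right one.

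Two points are underdeveloped. First, the a priori bound $\varphi(s)\psi(s) = O(\Psi(s))$ cannot simply be ``bootstrapped from the unperturbed Volterra equation'': the actual renewal kernel for $\varphi\psi$ is $\mathcal{K}_s(t)\varphi(t)/\varphi(s)$, not $\mathcal{I}(t-s)$, and boundedness of its resolvent is precisely what is in question. You would need a Gronwall-type or comparison argument showing that this kernel has $L^1$--norm strictly less than one uniformly over large times (or at least over $s$ large enough that the $O(s^{-1})$ terms have decayed), rather than deducing it from the $\mathcal{I}$--resolvent alone. Second, you invoke $\theta > 2\alpha + 2$ to make the three-regime kernel-error estimate converge, but Lemma~\ref{lemE:simplify} as stated does not carry that hypothesis; either the lemma is implicitly understood to inherit the hypotheses of the preceding proposition, or a sharper estimate on the error kernel (e.g.\ showing the critical window $|\omega(\sigma)|<\epsilon$ contributes only $O(\epsilon)$ uniformly, combined with the explicit $1/(1+s)$ decay of the coefficient corrections) must avoid that condition. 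As a sketch your proposal is plausible and well targeted, but these two steps are where a careful write-up would have to do real work.
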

\begin{proof}
This follows the same strategy as Lemma \ref{lemE:kernelreplacement}.
\end{proof}

\subsection{Rate bounds}

We let $\lambda^{-}$ be the left endpoint of the support of $\mu$ restricted to $(0,\infty)$.  If $\lambda^{-} = 0,$ we are in the non--strongly convex case above, and the rate of convergence is polynomial for any choice of step size that is convergent.  We show a step size choice that gives a good rate for all $\lambda^{-} > 0$ separately.

We conclude with bounds for the convolution kernel $\mathcal{I}$ which establish bounds for the rate under step size conditions which are strictly better than \eqref{eqE:convergence}.
We shall work under that $\gamma_1$ and $\gamma_2$ satisfy the condition
\begin{equation}\label{eqE:strongrate}
\int_{0+}^\infty 
\tfrac{\gamma_1 \Delta + \sigma^2\gamma_2^2}{2\gamma_2}
\mu(\dif \sigma^2)
\leq 1
\end{equation}
for some $\Delta > 1,$ which ensures that the algorithm converges.

We recall that the Malthusian exponent is defined as the solution $\lambda^*$ of
\[
1=\mathcal{F}(\lambda^*)
\coloneqq\int_0^\infty
\sigma^4
\biggl(
\frac{
\gamma_2^2(\sigma^2 \gamma_2-\lambda^*)^2 
+\gamma_2(\omega-2\gamma_1)(\sigma^2 \gamma_2-\lambda^*)
+2\gamma_1^2
}
{
( (\sigma^2 \gamma_2-\lambda^*)^2 + \sigma^2 \omega)
(\sigma^2 \gamma_2-\lambda^*)
}
\biggr)\mu(\dif \sigma^2),
\]
if it exists.

We shall produce a bound for $\mathcal{F}(\lambda)$ for $\lambda$ sufficiently small, namely:
\begin{lemma}\label{lemE:quadupper}
Suppose that $\Delta > 1$ and $\lambda^{-} > 0$ and that
$\lambda_*$ is defined by
\[
\lambda_* 
=
\frac{
\lambda^{-}\gamma_2 + 2\tfrac{\gamma_1}{\gamma_2}
-
\sqrt{
(\lambda^{-}\gamma_2 + 2\tfrac{\gamma_1}{\gamma_2})^2
-8\gamma_1 \lambda^{-}(1-\tfrac{1}{\Delta})
}
}{2},
\]
then for all $\sigma^2 \geq \lambda^{-},$
\[
\operatorname{II} \coloneqq
\sigma^4
\biggl(
\frac{
\gamma_2^2(\sigma^2 \gamma_2-\lambda)^2 
+\gamma_2(\omega-2\gamma_1)(\sigma^2 \gamma_2-\lambda)
+2\gamma_1^2
}
{
( (\sigma^2 \gamma_2-\lambda)^2 + \sigma^2 \omega)
(\sigma^2 \gamma_2-\lambda)
}
\biggr)
\leq 
\frac{\gamma_1 \Delta + \sigma^2\gamma_2^2}{2\gamma_2}.
\]
Moreover, we have the bounds
\[
\lambda_*
\geq
\frac{ 2\gamma_1 \lambda^{-}(1-\tfrac{1}{\Delta})}
{
\gamma_2
\lambda^{-} + 2\tfrac{\gamma_1}{\gamma_2}
}.
\]
\end{lemma}

This leads immediately to a rate bound:
\begin{corollary}\label{corE:stepsize}
  At the default parameters of SDANA $\gamma_2 = (\mathfrak{tr}(\mu))^{-1}$ where $\mathfrak{tr}(\mu) = \int_0^\infty \sigma^2 \mu( \dif \sigma^2)$ and $\gamma_1=\tfrac{\gamma_2}{4}$ 
  the convergence rate is at least $\tfrac{3}{8}\min\{ (\mathfrak{tr}(\mu))^{-1} \lambda^{-},\tfrac{1}{2} \}$.  The fastest possible rate, in contrast, is no larger than $\min\{ (\mathfrak{tr}(\mu))^{-1} \lambda^{-},\tfrac{1}{2} \}.$
\end{corollary}
\begin{proof}
  For the rates, we apply Lemma \ref{q:errate}.  Lemma \ref{lemE:quadupper} gives a lower bound on the Malthusian exponent, where we take $\Delta =4.$  As for the rate of the forcing function, we have that its rate is bounded by
\[
\begin{cases}
\gamma_2 \lambda^{-}, & \text{if } \omega = 4\gamma_1 - \gamma_2^2 \lambda^{-}> 0 \text{ or }\\
\gamma_2 \lambda^{-}-\sqrt{\gamma_2^2 (\lambda^{-})^2-4\gamma_1\lambda^{-}},
& \text{otherwise.}
\end{cases}
\]
This is always bounded above by $\min\{\gamma_2 \lambda^{-},2\tfrac{\gamma_1}{\gamma_2}\}$.  For convergence we should have 
\[
  \|\mathcal{I}\|
  =
  \int_{0+}^\infty 
  \tfrac{\gamma_1 + \sigma^2\gamma_2^2}{2\gamma_2}
  \mu(\dif \sigma^2)
  \leq 1,
\]
and thus optimizing in taking $\gamma_2\lambda^{-} = 2\gamma_1/\gamma_2$ and the above norm equal to $1,$ we conclude the fastest rate is at most $\frac{4\lambda^{-}}{\lambda^{-} + 2 \mathfrak{tr}(\mu)}$.  This is in turn at most the claimed amount.
\end{proof}

\begin{proof}[Proof of Lemma]
We begin with some simplifications.  The claimed bound is equivalent to 
\[
2\gamma_2 \sigma^4
\bigl(
2\gamma_1\gamma_2 - \lambda \gamma_2^2
+\tfrac{2\gamma_1^2}{\sigma^2\gamma_2 -\lambda}
\bigr)
\leq
\bigl(
\gamma_1 \Delta + \sigma^2 \gamma_2^2
\bigr)
\bigl(
\lambda^2 - 2\lambda \sigma^2 \gamma_2 + 4\gamma_1 \sigma^2
\bigr).
\]
After cancelling terms and rearranging
\[
\tfrac{4\gamma_1^2\gamma_2 \sigma^4}{\sigma^2\gamma_2 -\lambda}
\leq
\gamma_1 \Delta
\bigl(
- 2\lambda \sigma^2 \gamma_2 + 4\gamma_1 \sigma^2
\bigr)
+
\bigl(
\gamma_1 \Delta + \sigma^2 \gamma_2^2
\bigr)\lambda^2.
\]
Hence dropping the $\lambda^2$ term and simplifying, it suffices that 
\[
\operatorname{III}
\coloneqq
\frac{2\gamma_1 \sigma^2}
{
\bigl(
2\tfrac{\gamma_1}{\gamma_2}- \lambda
\bigr)
\bigl(
{\sigma^2\gamma_2 -\lambda}
\bigr)
}
\leq
\Delta
.
\]
The map $x\mapsto \tfrac{x}{x\gamma_2 -\lambda}$ is decreasing for $x\gamma_2 > \lambda$, and hence it suffices that
\[
\operatorname{IV}
\coloneqq
\frac{2\gamma_1 \lambda^{-}}
{
\bigl(
2\tfrac{\gamma_1}{\gamma_2}- \lambda
\bigr)
\bigl(
{\lambda^{-}\gamma_2 -\lambda}
\bigr)
}
\leq
\Delta
.
\]
It follows that for all $\lambda$ less than the smallest root of
\[
\lambda^2 - \lambda(\lambda^{-}\gamma_2 + 2\tfrac{\gamma_1}{\gamma_2})+2\gamma_1 \lambda^{-}(1-\tfrac{1}{\Delta})
=0,
\]
\(\operatorname{IV} \leq \Delta.\)
Solving for the smaller root $\lambda_*$, we have
\begin{equation}\label{eqE:g1g2}
\lambda \leq 
\lambda_*
=
\frac{
\lambda^{-}\gamma_2 + 2\tfrac{\gamma_1}{\gamma_2}
-
\sqrt{
(\lambda^{-}\gamma_2 + 2\tfrac{\gamma_1}{\gamma_2})^2
-8\gamma_1 \lambda^{-}(1-\tfrac{1}{\Delta})
}
}{2}.
\end{equation}
Using concavity of the square root, we can bound $\sqrt{a+x} \leq \sqrt{a} + \tfrac{x}{2\sqrt{a}}$ and so conclude
\[
\lambda_*
\geq
\frac{ 2\gamma_1 \lambda^{-}(1-\tfrac{1}{\Delta})}
{
\gamma_2
\lambda^{-} + 2\tfrac{\gamma_1}{\gamma_2}
}.
\]
\end{proof}

\section{The general SDAHB kernel}\label{sec:dahb}
In this section, we analyze in detail a general version of SDAHB where we also include a $\gamma_2$ that is, we consider an algorithm SDA where $\gamma_1, \gamma_2 > 0$ and $\Delta(k,n) = \tfrac{\theta}{n}$. In this general setting, the log-derivative $\Phi(t) = \theta$. We recall the ODE \eqref{eq:JEQ_SHB} that describes this process (where $\sigma_j^2 = \lambda$) 
\begin{equation}
    \begin{aligned} \label{eq:JEQ_SHB_1}
    &\widehat{J}^{(3)} + \left (  -3\theta + 3 \gamma_2 \lambda \right ) \widehat{J}^{(2)} + \left ( 2 \theta^2 -4 \gamma_2 \lambda \theta +4\gamma_1\lambda + 2 \gamma_2^2 \lambda^2 \right ) \widehat{J}^{(1)} \\
    &+ \left ( - 4 \gamma_1 \lambda \theta + 4 \gamma_1 \gamma_2 \lambda^2 \right ) \widehat{J}\\
    &=
    \tfrac{\gamma_2^2}{\gamma_1}
    \widehat{\psi}^{(2)}
    +\bigl(
    2\gamma_2
    + 
    \bigl(-\theta + \gamma_2\lambda\bigr)\tfrac{\gamma_2^2}{\gamma_1}
    \bigr)
    \widehat{\psi}^{(1)}
    +\bigl(
    2\gamma_1
    +  2\gamma_1\lambda \tfrac{\gamma_2^2}{\gamma_1}
    \bigr)
    \widehat{\psi}.
    \end{aligned}
\end{equation}
The initial conditions are given by
\begin{equation}
\begin{gathered}
\widehat{J}(0) = \gamma_1^{-1}\EE\biggl[ \bigl(\nu_{0,j}-\tfrac{(\UU^T \eeta)_j}{\sigma_j}\bigr)^2\biggr],
\quad
 \widehat{J}^{(1)}(0) =
 \tfrac{\gamma_2^2}{\gamma_1} \widehat{\psi}(0)
-\widehat{J}(0) \bigl( -2\theta + 2\gamma_2 \lambda \bigr),
\quad\text{and} \\
\widehat{J}^{(2)}(0)
 =  \tfrac{\gamma_2^2}{\gamma_1} \widehat{\psi}^{(1)}(0)+ 2\gamma_2 \widehat{\psi}(0) - 2 \gamma_1 \lambda \widehat{J}(0) + ( 2\theta - 2 \gamma_2 \lambda) \widehat{J}^{(1)}(0).
\end{gathered}
\end{equation}
We note that the ODE in \eqref{eq:JEQ_SHB_1} is constant coefficient and therefore can be solved by finding the characteristic polynomial, that is,
\begin{align*}
    0 &= \xi^3 + (3 \gamma_2 \lambda - 3 \theta) \xi^2 + (2\theta^2 -4 \gamma_2 \lambda \theta + 4 \gamma_1 \lambda + 2 \gamma_2^2 \lambda^2) \xi + 4\gamma_1 \gamma_2 \lambda^2 - 4\gamma_1 \lambda \theta \\
    0 &= (\xi + \lambda \gamma_2 - \theta) (\xi^2 + (2\lambda \gamma_2 - 2 \theta) \xi + 4 \lambda \gamma_1 ) \\
    \xi &= \theta - \lambda \gamma_2 \quad \text{and} \quad \xi = -(\lambda \gamma_2 - \theta) \pm \sqrt{(\lambda \gamma_2-\theta)^2-4 \lambda \gamma_1}.
\end{align*}
It immediately follows that the solutions to \eqref{eq:JEQ_SHB_1} are linear combinations of $\text{exp}(-(\lambda \gamma_2-\theta)t)$ and $\text{exp}(-(\lambda \gamma_2-\theta) \pm \sqrt{(\lambda \gamma_2-\theta)^2-4\lambda_j \gamma_1})$. We now write the Dirichlet, Neumann, and 2nd-derivative solutions for which we will use to derive the kernel and the forcing term. For convenience, we denote $ \omega \defas 4 \lambda \gamma_1- (\lambda \gamma_2 - \theta)^2$ and $\rho \defas \lambda \gamma_2 - \theta$. Taking derivatives, we get the following expressions for $K_s(t)$:
\begin{align*}
    K_s(t) 
    &= \exp(-t\rho) \big (c_1 + c_2 \exp(-t\sqrt{-\omega}) + c_3 \exp(t \sqrt{-\omega}) \big ) \\
    \tfrac{d}{dt} K_s(t) 
    &= -\rho \exp(-t\rho) \big (c_1 +c_2 \exp(-t\sqrt{-\omega}) +c_3 \exp(t \sqrt{-\omega}) \big )\\
    & \quad + \sqrt{-\omega} \exp(-t \rho) \big (c_3 \exp(t \sqrt{-\omega}) - c_2 \exp(-t\sqrt{-\omega}) \big )\\
    \tfrac{d}{dt^2} K_s(t) 
    &= \rho^2 \exp(-t \rho) \big (c_1 + c_2 \exp(-t\sqrt{-\omega}) + c_3\exp(t \sqrt{-\omega}) \big )\\
    &\quad + 2 \rho \sqrt{-\omega} \exp(-t\rho) \big (c_2 \exp(-t \sqrt{-\omega}) - c_3 \exp(t \sqrt{-\omega})  \big )\\
    & \quad -\omega \exp(-t \rho) \big ( c_2 \exp(-t \sqrt{-\omega}) + c_3 \exp(t \sqrt{-\omega}) \big ).
\end{align*}
Provided that $\omega \neq 0$, we can now solve for $c_1, c_2, c_3$ for the Dirichlet, Neumann, and 2nd-derivative solutions,
\begin{equation}\begin{aligned}
\text{(Dirichlet sol., ${\mathcal{D}}_s(t)$)} \quad &{L}[{\mathcal{D}}_s(t)] = 0 \quad \text{where} \quad  {\mathcal{D}}_s(s) = (1,0,0)^T,\\
\text{(Neumann sol., ${\mathcal{N}}_s(t)$)} \quad &{L}[{\mathcal{N}}_s(t)] = 0 \quad \text{where} \quad  {\mathcal{N}}_s(s) = (0,1,0)^T,\\
\text{(2nd derivative sol., ${\mathcal{H}}_s(t)$)} \quad &{L}[{\mathcal{H}}_s(t)] = 0 \quad \text{where} \quad  {\mathcal{H}}_s(s) = (0,0,1)^T.
\end{aligned}
\end{equation}
To distinguish these solutions, we denote the coefficients by $c_i^D, c_i^N, c_i^H$ for $i = 1,2,3$. We begin by find the coefficients for $\mathcal{D}_s(t)$: 
\begin{align*}
    (c_1^{D}, c_2^{D}, c_3^{D}) &= \bigg ( \exp(s\rho) (1+\tfrac{\rho^2}{\omega} ), \tfrac{1}{2}\exp(s(\rho+\sqrt{-\omega})) \big ( \tfrac{-\rho^2}{\omega} - \tfrac{\rho}{\sqrt{-\omega}} \big ), \tfrac{1}{2} \exp(s (\rho - \sqrt{-\omega})) \big (\tfrac{-\rho^2}{\omega} + \tfrac{\rho}{\sqrt{-\omega}} \big ) \bigg )\\
    (c_1^{N}, c_2^{N}, c_3^{N}) &= \left ( \exp(s\rho)\tfrac{2\rho}{\omega}, \tfrac{1}{2} \exp(s(\rho + \sqrt{-\omega})) \big ( \tfrac{-2\rho}{\omega} - \tfrac{1}{\sqrt{-\omega}} \big ), \tfrac{1}{2} \exp(s (\rho - \sqrt{-\omega}) ) \big (\tfrac{-2\rho}{\omega} + \tfrac{1}{\sqrt{-\omega}} \big )  \right )\\
    (c_1^{H}, c_2^{H}, c_3^H) &= \bigg (\exp(s \rho) \tfrac{1}{\omega}, -\tfrac{1}{2 \omega} \exp(s(\rho + \sqrt{-\omega})), -\tfrac{1}{2 \omega} \exp(s(\rho - \sqrt{-\omega})) \bigg ).
\end{align*}
We recall $J =\gamma_1e^{-2 \theta t} \widehat{J}$ and Corollary~\ref{cor:kernel_aleph} that
\begin{align*}
    J(t) = \gamma_1 e^{-2\theta t} \widehat{J}_0(t) + \gamma_1 e^{-2 \theta t} \int_0^t K_s(t) \widehat{\psi}(s) \, \dif s. 
\end{align*}
Using the coefficients in Corollary~\ref{cor:kernel_aleph}, we write an expression for the forcing term 
\begin{align} \label{eq:aleph_forcing}
    \widehat{J}_0(t) = \frac{1}{2} \left (1+\tfrac{\rho^2}{\omega} \right ) \frac{1}{\gamma_1} \mathbb{E}[(\nu_{0,j}- \tfrac{(\UU^T \eeta)_j}{\sigma_j} )^2] e^{-t\rho} (1 + \cos(t \sqrt{\omega} + \vartheta_1)),
\end{align}
where the phase shift satisfies
\begin{equation} \begin{aligned} \label{eq:vartheta_1_alpha}
    \cos(\vartheta_1) &= \frac{ \big(1+ \tfrac{\rho}{\sqrt{-\omega}} \big )^2 + \big (1- \frac{\rho}{\sqrt{-\omega}} \big )^2}{2 \big (1 + \tfrac{\rho^2}{\omega} \big )} = \frac{\omega-\rho^2}{\rho^2 + \omega}\\
    \sin(\vartheta_1) &= \frac{ \big(1- \tfrac{\rho}{\sqrt{-\omega}} \big )^2 - \big (1+ \frac{\rho}{\sqrt{-\omega}} \big )^2}{2 \big (1 + \tfrac{\rho^2}{\omega} \big )} i = \frac{2 \rho \sqrt{\omega}}{\rho^2 + \omega}.
\end{aligned} \end{equation}
We now give an expression for the kernel $K_s(t)$:
\begin{align} \label{eq:aleph_kernel}
   & K_s(t) = \frac{1}{2} \left ( \frac{\gamma_2^2}{\gamma_1} + \frac{\gamma_2^2 \rho^2}{ \omega \gamma_1} +\frac{4}{\omega} (\gamma_1-\gamma_2 \rho) \right ) e^{-(t-s) \rho} \left(1 + \cos((t-s)\sqrt{\omega}+\vartheta_2) \right ) 
\end{align}
where we have 
\begin{equation} \begin{aligned} \label{eq:vartheta_2_alpha}
\cos(\vartheta_2) &= \frac{2 \left (\frac{\gamma_2^2}{4 \gamma_1} - \frac{\gamma_2^2 \rho}{4 \omega \gamma_1} + \frac{\gamma_2 \rho}{\omega} - \frac{\gamma_1}{\omega} \right )}{\frac{1}{2} \left ( \frac{\gamma_2^2}{\gamma_1} + \frac{\gamma_2^2 \rho^2}{\gamma_1 \omega} +\frac{4}{\omega} (\gamma_1-\gamma_2 \rho) \right )} = \frac{ \gamma_2^2\omega - \gamma_2^2 \rho + 4 \gamma_2 \rho \gamma_1 - 4 \gamma_1^2}{ \gamma_2^2 \omega + \gamma_2^2 \rho^2 -4\gamma_2 \gamma_1 \rho +4\gamma_1^2}\\
    \sin(\vartheta_2) &= \frac{\frac{\gamma_2^2 \rho}{\gamma_1\sqrt{\omega}} - \frac{2 \gamma_2}{\sqrt{\omega}}}{\frac{1}{2} \left ( \frac{\gamma_2^2}{\gamma_1} + \frac{\gamma_2^2 \rho^2}{\gamma_1 \omega} +\frac{4}{\omega} (\gamma_1-\gamma_2 \rho) \right )} = \frac{2( \gamma_2^2 \rho \sqrt{\omega} - 2 \gamma_2 \gamma_1 \sqrt{\omega})}{\gamma_2^2 \omega + \gamma_2^2 \rho^2 -4\gamma_2 \gamma_1 \rho+4\gamma_1^2 }.
\end{aligned} \end{equation}
It follows that $J(t)$ is the sum of \eqref{eq:aleph_forcing} and \eqref{eq:aleph_kernel}. We now recall that $\widehat{\psi}(s) = \frac{2 \lambda \exp(2\theta s) \psi^{(n)}(s)}{n}$. Finally we arrive at the Volterra equation
\begin{align*}
\psi(t) = \frac{1}{2} \int_0^{\infty} \lambda \gamma_1 e^{-2 \theta t} \widehat{J}_0^{(\lambda)}(t) \, \dif \mu(\lambda) + \gamma_1  \int_0^t \int_0^\infty \sigma^4 e^{-2\theta (t-s)} K_s^{(\lambda)}(t) \dif \mu(\lambda) \, \psi(s) \, \dif s.
\end{align*}

\begin{proposition}[Volterra equation for general SDAHB with parameters$(\gamma_1, \gamma_2, e^{\theta t})$ ] The Volterra equation for the general SDAHB with step size parameters $\gamma_1, \gamma_2 > 0$ and $\varphi(t) = e^{\theta t}$ is 
\begin{equation} \begin{aligned}
    G^{(\lambda)}(t) &= \frac{1}{4} \left (1 + \frac{\rho^2}{\omega} \right ) e^{-t(\rho + 2 \theta)}(1+\cos(t \sqrt{\omega} + \vartheta_1)) \\
    K_s^{(\lambda)}(t) &=  \frac{\lambda^2}{2} \left ( \gamma_2^2 + \frac{\gamma_2^2 \rho^2}{ \omega } +\frac{4}{\omega} (\gamma_1^2-\gamma_2 \gamma_1 \rho) \right ) e^{-(t-s) (\rho + 2 \theta)} \left(1 + \cos((t-s)\sqrt{\omega}+\vartheta_2) \right ),
\end{aligned} \end{equation}
where $\omega = 4 \lambda \gamma_1 - (\lambda \gamma_2 - \theta)^2$, $\rho = \lambda \gamma_2-\theta$, and $\vartheta_1$ and $\vartheta_2$ are defined in \eqref{eq:vartheta_1_alpha} and \eqref{eq:vartheta_2_alpha} respectively. 
\end{proposition}

\begin{corollary}[Volterra equation for SDAHB] The Volterra equation for SDAHB with step size parameters $\gamma_1 > 0$, $\gamma_2 = 0$, and $\varphi(t) = e^{\theta t}$ is 
\begin{equation} \begin{aligned}
    G^{(\lambda)}(t) &= \frac{1}{4} \left (1 + \frac{\theta^2}{\omega} \right ) e^{-t\theta}(1-\cos(t \sqrt{\omega} + \vartheta_1)) \\
    \text{and} \quad K_s^{(\lambda)}(t) &=  \frac{2 \gamma_1^2 \lambda^2}{\omega}  e^{-(t-s) \theta} \left(1 - \cos((t-s)\sqrt{\omega}) \right ),
\end{aligned} \end{equation}
where $\omega = 4 \lambda \gamma_1 - \theta^2$ and $\vartheta_1$ is defined by 
\begin{equation}
    \cos(\vartheta_1) = \frac{\theta^2-\omega}{\theta^2+\omega} \qquad \text{and} \qquad \sin(\vartheta_1) = \frac{2 \theta \sqrt{\omega}}{\theta^2 + \omega}.
\end{equation}
\end{corollary}

\subsection{ Convergence analysis for SDAHB }

The interaction kernel for SDAHB is therefore of convolution type, and we have
\[
F(t) =
\int_0^\infty G^{(\lambda)}(t) \mu(\dif \lambda)
\quad
\text{and}
\quad
\mathcal{I}(t) =
\int_0^\infty K_0^{(\lambda)}(t) \mu(\dif \lambda).
\]
The loss of homogenized SGD then satisfies
\[
\Exp_{\HH} f(\XX_t)
= F(t) + \int_0^t \mathcal{I}(t-s)\Exp_{\HH} f(\XX_s) \dif s.
\]
Computing the Laplace transform of this kernel for all $x$ sufficiently small,
\[
\mathcal{F}(x)
\defas
\int_0^\infty 
e^{x t}
\mathcal{I}(t) 
\dif t
=
\int_0^\infty
\frac{2 \gamma_1^2 \lambda^2}
{
(\theta - x)(x^2-2\theta x + 4\gamma_1 \lambda)
}
\mu(\dif \lambda),
\]
and recall that the Malthusian exponent $\lambda^{*}$ is defined as the root of $\mathcal{F}(\lambda^{*})=1$, if it exists. 
In particular evaluating at $x=0,$ we compute the norm
\begin{equation}\label{eqG:norm}
\|\mathcal{I}\|
=
\int_0^\infty
\frac{2 \gamma_1^2 \lambda^2}
{
\theta (4\gamma_1 \lambda)
}
\mu(\dif \lambda)
=
\frac{\gamma_1}{2\theta}
\int_0^\infty
\lambda
\mu(\dif \lambda)
=
\frac{\gamma_1}{2\theta}
\mathfrak{tr}(\mu)
.
\end{equation}

\subsection{ Convergence analysis for SDAHB }

We now suppose we have passed to a limiting measure $\mu$ with a support $\{0\} \cup [\lambda^{-},\lambda^{+}]$ that satisfies
\begin{equation}\label{eqG:leftedge}
\mu([\lambda^-,\lambda^-+\epsilon]) \underset{\epsilon \to 0}{\sim} \ell \epsilon^{\alpha}.
\end{equation}
The forcing function satisfies, with $\omega=\omega(\lambda^{-}) = 4\lambda^{-}\gamma_1 - \theta^2$
and for some constants $c,c_1,c_2$ depending on the algorithm parameters,
\[
F(t) 
\underset{t \to \infty}{\sim}
\begin{cases}
c t^{\alpha-1}e^{-t(\theta - \sqrt{\theta^2 - 4\gamma_1 \lambda^{-}})},
&\text{ if }\quad  \omega < 0, \\
c t^{\alpha+1}e^{-t\theta},
&\text{ if }\quad  \omega = 0, \\
(c_1t^{\alpha-1} + c_2t^{\alpha-1}\cos(t \sqrt{\omega}))e^{-t \theta},
&\text{ if }\quad  \omega > 0.
\end{cases}
\]

From standard renewal theory (Lemma \ref{q:errate}), we have that 

\begin{proposition}\label{propG:Istrconvx}
If $\mathcal{F}(\lambda^-) < 1$ and \eqref{eqG:leftedge} holds
\[
\Exp_{\HH} f(\XX_t)  - \frac{ \widetilde{R}\mu(\{0\})}{1-\|\mathcal{I}\|} 
=F(t)e^{o(t)}
\]
or if $\mathcal{F}(\lambda_-) > 1$ then with $\lambda^*$ the unique solution of $\mathcal{F}(\lambda^*) = 1$ 
\[
\Exp_{\HH} f(\XX_t) - \frac{ \widetilde{R}\mu(\{0\})}{1-\|\mathcal{I}\|} = e^{-(\lambda_*+o(1))t}.
\]
\end{proposition}
We note that if we take the default parameters, we come within a factor of the maximum rate.
\begin{proposition}\label{propG:default}
Suppose we take the default parameters for SDAHB, that is
\[
\theta=2
\quad\text{and}\quad
\gamma_1 
= \frac{\theta}{\mathfrak{tr}(\mu)}
= \frac{\theta}{\int\lambda \mu(\dif \lambda)},
\]
the rate of convergence is at least
\[
\lambda_* \geq \frac{\gamma_1\lambda^{-}\theta}{2\gamma_1 \lambda^{-} +\theta^2}
=
\frac{2\gamma_1\lambda^{-}}{2\gamma_1 \lambda^{-} +4}.
\]
The fastest possible rate is at most $\frac{4 \lambda^{-}}{\mathfrak{tr}(\mu)}.$
\end{proposition}
\begin{proof}
We just need to bound $\mathcal{F}(x) \leq 1$
for $x\leq \min\{ \tfrac{2\lambda^{-}\gamma_1}{\theta}, \tfrac{\theta}{2}\}$ and with the parameter choices made.
By monotonicity
\[
\mathcal{F}(x)
\leq 
\int_0^\infty
\frac{2 \gamma_1^2 \lambda \lambda^{-}}
{
(\theta - x)(x^2-2\theta x + 4\gamma_1 \lambda^{-})
}
\mu(\dif \lambda)
\leq 
\frac{2 \gamma_1 \theta \lambda^{-}}
{
(\theta - x)(x^2-2\theta x + 4\gamma_1 \lambda^{-})
}.
\]
We bound further from above by dropping the $x^2$ and then solving the result quadratic, i.e. $\mathcal{F}(x) \leq 1$ if 
\[
x \leq \frac{4\gamma_1 \lambda^{-} +2\theta^2
-
\sqrt{
(4\gamma_1 \lambda^{-} +2\theta^2)^2-8\theta(2 \gamma_1 \lambda^{-}\theta)
}
}
{
4\theta
}.
\]
By concavity of the square root, it suffices to have
\[
x \leq 
\frac{\gamma_1\lambda^{-}\theta}{2\gamma_1 \lambda^{-} +\theta^2}.
\]

The rate of $F$ is at most $\min\{ \tfrac{2\gamma_1 \lambda^{-}}{\theta}, \theta\}$, and so optimizing this over $\tfrac{\gamma_1}{2\theta} \mathfrak{tr}(\mu) \leq 1$, we arrive at $\theta = \frac{4 \lambda^{-}}{\mathfrak{tr}(\mu)}$
\end{proof}

\subsection{Average-case rates non--strongly convex}
We instead suppose the support is given by $[0, \lambda^+]$ and that
\[
\mu( (0,\varepsilon) ) 
\underset{\epsilon \to 0}{\sim}
\ell \epsilon^{\alpha}.
\]
The forcing function, for any $\theta > 0$ then behaves like 
\begin{equation}\label{eqG:sdahb}
F(t) 
\underset{t \to \infty}{\sim}
c(Rt^{-\alpha-1} + \widetilde{R} t^{-\alpha}).
\end{equation}
It follows using Lemma \ref{q:rvrate} that when $\|\mathcal{I}\| < 1,$ the same rate holds for $\Exp f(\XX_t)$ up to multiplication by $(1-\|\mathcal{I}\|)^{-1}$.

\subsection{Degeneration to SGD}
\label{sec:SDAHBsGD}

\begin{theorem}\label{thmG:degeneration}
Suppose the homogenized SGD diffusions for SHB and SGD are chosen so that $\gamma^{\text{sgd}}=\tfrac{\gamma^{\text{shb}}}{\theta^{\text{shb}}}$.  Suppose that $n \to \infty$ and that $\HH$ is chosen so that $\lambda^+_\HH$ is bounded in $n$. Then for any $t >0$
\[
|\Exp_{\HH} f(\XX^{\text{shb}}_t) -\Exp_{\HH} f(\XX^{\text{sgd}}_t)| 
\underset{n \to \infty}{\longrightarrow} 0.
\]
\end{theorem}
 \begin{proof}
 The homogenized SGD diffusion for SHB is the same as the diffusion for SDAHB with parameters
 \(
 (\theta^{\text{sdahb}},\gamma^{\text{sdahb}})
 =
 (n \theta^{\text{shb}},
 n\gamma^{\text{shb}})
 \)
 An elementary computation shows that uniformly in compact sets of $\lambda$ and $t$, the forcing function and interaction kernel ($G^{(\lambda)}$ and $K^{(\lambda)}$) of SDAHB with these parameters satisfy
 \[
 G^{(\lambda)}(t) 
 \underset{n \to \infty}{\longrightarrow}
 e^{-2\gamma^{\text{sgd}} \lambda t}
 \quad\text{and}\quad
 K^{(\lambda)}(t) \underset{n \to \infty}{\longrightarrow}
 \gamma^2 \lambda^2 e^{-2\gamma^{\text{sgd}} \lambda t}.
 \]
 Thus under the assumption that the eigenvalues of $\HH$ remain bounded as $n \to \infty$, the forcing function and kernel for each of 
 $\Exp f(\XX^{\text{SHB}}_t)$
 and $\Exp f(\XX^{\text{sGD}}_t)$ differ by an error that  goes to $0$ as $n\to\infty$ uniformly on compact sets of time.
 \end{proof}

\begin{proposition}\label{propG:sGDdefault}
For SGD, with default parameters
\(
\gamma = \frac{1}{\int \lambda \mu(\dif \lambda)},
\)
the Malthusian exponent is at least
\(
\lambda_* \geq \gamma \lambda^{-}.
\)
\end{proposition}
\begin{proof}
For SGD, the Malthusian exponent is given simply as the root of
\[
1=
\mathcal{F}(x)
=\int_0^\infty 
\frac{\gamma_1^2 \lambda^2 }
{
(2\gamma \lambda - x)
}
\mu(\dif \lambda)
\leq 
\frac{\gamma \lambda^- }{2\gamma \lambda^- - x}.
\]
(See \cite{paquetteSGD2021} or send $\theta \to \infty$ with $\gamma_1 = \gamma \theta$ in SDAHB).
Thus for $x =\gamma \lambda^{-}$ we have $\mathcal{F}(x) \leq 1,$ and so $\lambda_* \geq \gamma\lambda^{-}$.
\end{proof}

\begin{proposition}\label{prop:SDAHB}
For $\theta$ sufficiently large, and when $\mathcal{F}(\lambda^-) \leq 1,$ SDAHB with parameters $(\gamma_1,\theta)$ is faster than SGD with parameters $(\gamma = \tfrac{\gamma_1}{\theta})$ but never more than a factor of $2$ than SGD at its default parameter.
\end{proposition}
\begin{proof}
Note that for large $\theta$, with $\omega < 0$ we always have that $F(t)$ has rate
\[
F(t) \sim 
c t^{\alpha-1}e^{-t(\theta - \sqrt{\theta^2 - 4\gamma_1 \lambda^{-}})}.
\]
The rate for $F$ satisfies 
\[
\theta - \sqrt{\theta^2 - 4\gamma_1\lambda^{-}}
>
\theta - \bigl(
\theta - \tfrac{4\gamma_1\lambda^{-}}{2\theta}
\bigr)
=2\gamma \lambda^{-}.
\]
Moreover, the expression on the left is monotone decreasing $\theta$ until the argument of the radical becomes negative.  Hence, we maximize the rate by taking the smallest admissible $\theta,$ which at the convergence threshold is given by
\[
\frac{\theta}{\gamma_1}=\frac{\int \lambda \mu(\dif \lambda)}{2}.
\]
Substituting this ratio into $\theta - \sqrt{\theta^2 - 4\gamma_1\lambda^{-}}$ to remove $\theta$ and then maximizing gives
\(
\tfrac{2\lambda^-}{\int \lambda \mu(\dif \lambda)},
\)
which is no more than a factor of $2$ than SGD at its default parameter.
\end{proof}

\begin{figure}[t!]
    \centering
    \includegraphics[scale = 0.25]{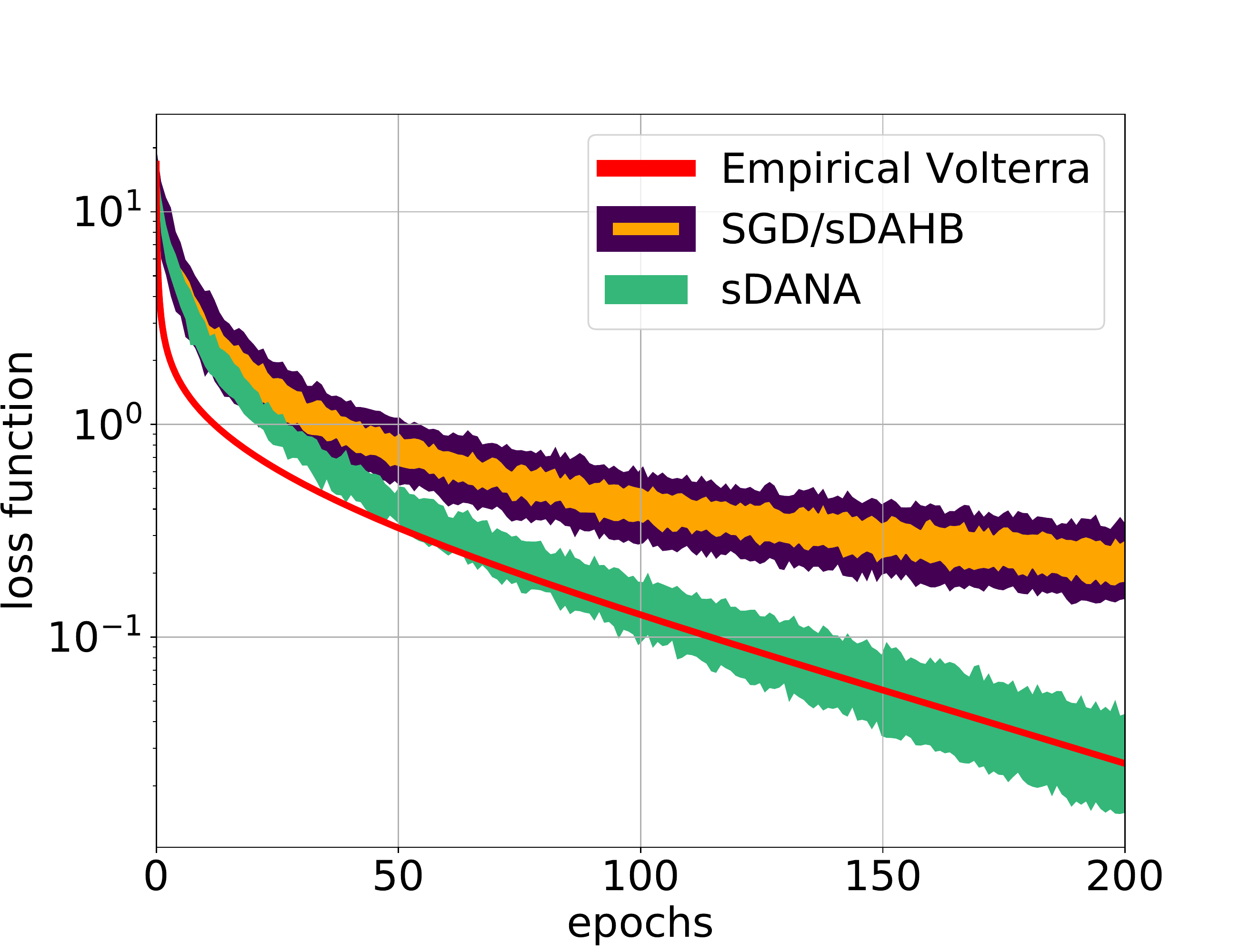}
      \includegraphics[scale = 0.25]{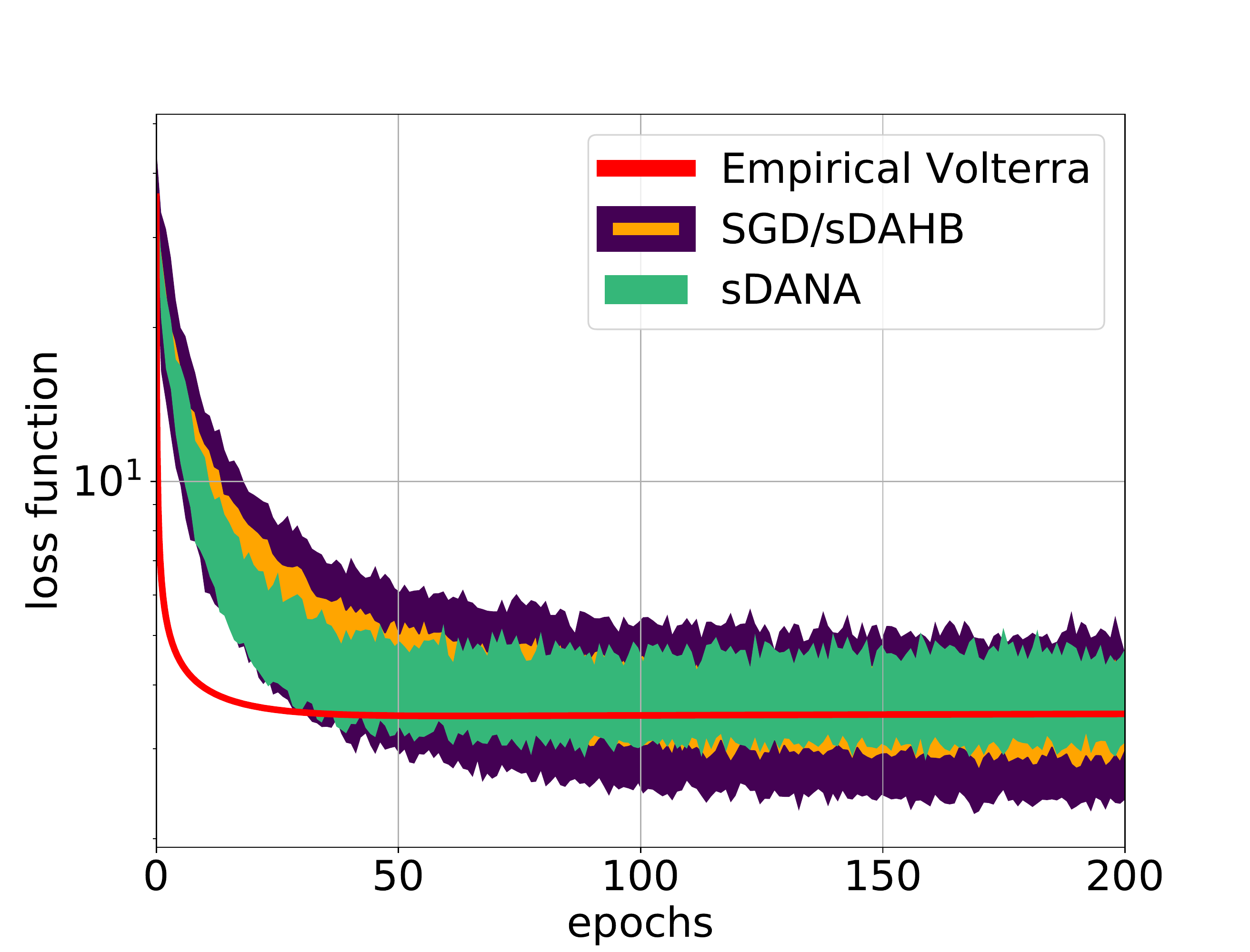}
   \caption{\textbf{SDANA \& SGD vs Theory on MNIST.} 
     MNIST ($60000\times 28 \times 28$ images) \citep{lecun2010mnist} is reshaped into $30$ (left) and $60$ (right) matrices of dimension $1000\times 1568(784)$, representing $1000$ samples of groups of $2$ or $1$ digits, respectively (preconditioned to have centered rows of norm-1).  First digit of each 2 or 1 is chosen to be the target $\bb$. Algorithms were run 30(60) times with default parameters (without tuning) to solve \eqref{eq:lsq}. 80\%--confidence interval is displayed. Volterra (SDANA) is generated with eigenvalues from the first MNIST data matrix with a ratio of signal-to-noise of 6-to-1. Volterra predicts the convergent behavior of SDANA in this non-idealized setting. SDANA outperforms equivalent SGD/SDAHB. 
     }.
    \label{fig:MNIST_extra}
\end{figure}

\section{Numerical simulations}\label{sec: numerical_simulation}
To illustrate our theoretical results, we report simulations using SGD, stochastic heavy-ball (SHB) \citep{Polyak1962Some}, SDAHB, SDANA (Table~\ref{table:stochastic_algorithms}) on the least squares problem. In all simulations of the random least squares problem, the vectors $\xx_0,$ $\widetilde{\xx},$  and $\eeta,$ are sampled i.i.d. from a standard Gaussian $N(0, \tfrac{R}{2d} \II)$, $N(0, \tfrac{R}{2d} \II)$ and $N(0, \tfrac{\widetilde{R}}{n} \II)$ respectively and the entries of $\AA$, $A_{ij} \sim N(0,\tfrac{1}{d})$.  Figures \ref{fig:concentration_SHB}, \ref{fig:SHB_equals_SGD}, \ref{fig:concentration} are with noise; the first two have $R=\widetilde{R} = 1$ and the last is $R = 1=100 \widetilde{R}$.  Figure \ref{fig:SDANA_faster} is with noise 0.  

\paragraph{Volterra equation.} The forcing term $F(t)$ in \eqref{thm:hSGD} is solved by a Runge-Kutta method after which we applied a Chebyshev quadrature rule to approximate the integral with respect to the Marchenko-Pastur distribution. The Chebyshev quadrature is also used to derive a numerical approximation for the kernel, $\mathcal{I}(t-s)$, \eqref{eqF:conv}. Next, to generate the solution $\psi(t)$ of the Volterra equation, we implement a Picard iteration which finds a fix point to the Volterra equation by repeatedly convolving the kernel and adding the forcing term. 

Despite the numerical approximations to integrals, the resulting solution to the Volterra equation ($\psi$, red lines in the plots) models the true behavior of all the stochastic algorithms analyzed in this paper remarkably well (see Fig.~\ref{fig:concentration_SHB}, \ref{fig:SHB_equals_SGD}, and \ref{fig:concentration}). Notably, it captures the oscillatory trajectories in the momentum methods often is seen in practice due to their overshooting (see Fig.~\ref{fig:concentration}).  We note that the Volterra equation for SDANA reliably undershoots simulations of SDANA for small time (say $t < 10$), but matches for larger times ($t > 100$).  This is due in part because the convolution Volterra equation is only an approximation for SDANA that holds as time grows larger, and hence the undershoot is consistent with theory.

\paragraph{Real data.} The MNIST examples (Figures~\ref{fig:MNIST} and \ref{fig:MNIST_extra}) are shown to demonstrate that large--dimensional random matrix predictions often work for large dimensional real data. Figure~\ref{fig:MNIST} is strongly convex as $\lambda^- = 0.041$. This corresponds to a similar convexity structure as $r = 1.44$ in Marchenko-Pastur. Under this convexity, we do not expect SDANA to be faster than SGD/SDAHB. This is reflected in the figure as both SDANA and SGD are parallel to each other after $t > 50$. We chose to include the $6$-sequential images in the main paper in order to show multiple properties of the algorithms in the same image: (1) empirical Volterra and SDANA matched and (2) SGD and SDAHB have similar dynamics. To see the behavior of the algorithms on a pure MNIST dataset, see Figure~\ref{fig:MNIST_extra}. As mentioned above, the Volterra equation always initially underestimates the dynamics of SDANA.

\end{document}